\newtheorem{theorem}{Theorem}[section]
\newtheorem{prop}[theorem]{Proposition}
\newtheorem{lemma}[theorem]{Lemma}
\newtheorem{cor}[theorem]{Corollary}
\newtheorem{question}[theorem]{Question}
\newtheorem*{notation*}{Notation}
\newtheorem*{convention*}{Convention}
\newtheorem{conj}{Conjecture}[section]
\newtheorem{mainthm}{Theorem}
\theoremstyle{definition}
\newtheorem{definition}[theorem]{Definition}
\newtheorem{remark}[theorem]{Remark}
\newtheorem{example}[theorem]{Example}
\numberwithin{equation}{section}
\newcommand{\K}{\mathbf{k}}
\newcommand{\N}{\mathbb{N}}
\newcommand{\wt}{\widetilde}
\newcommand{\iv}{\operatorname{iv}}
\newcommand{\Tor}{\operatorname{Tor}}
\newcommand{\Ext}{\operatorname{Ext}}
\newcommand{\pd}{\operatorname{pd}}
\newcommand{\depth}{\operatorname{depth}}
\newcommand{\reg}{\operatorname{reg}}
\newcommand{\diam}{\operatorname{diam}}
\tikzstyle{vertex}=[circle, draw, inner sep=0pt, minimum size=5pt]
\begin{document}
\title[On binomial edge ideals of corona of graphs]{On binomial edge ideals of corona of graphs}
\author{Buddhadev Hajra}
\address{Chennai Mathematical Institute\newline \indent H1 SIPCOT IT Park, Siruseri, Kelambakkam 603103, India}
\email{hajrabuddhadev92@gmail.com}

\author{Rajib Sarkar}
\address{Department of Mathematics, National Institute of Technology Warangal \newline \indent Hanamkonda, Telangana-- 506004, India.}
\email{rajibs@nitw.ac.in/rajib.sarkar63@gmail.com}

\begin{abstract}
For a simple graph $G$, let $J_G$ denote the corresponding binomial edge ideal. This article considers the binomial edge ideal of the corona product of two connected graphs $G$ and $H$. The corona product of $G$ and $H$, denoted by $G\circ H$, is a construction where each vertex of $G$ is connected (via the coning-off) to an entire copy of $H$. This is a direct generalization of a cone construction. Previous studies have shown that for $J_{G \circ H}$ to be Cohen-Macaulay, both $G$ and $H$ must be complete graphs. However, there are no general formulae for the dimension, depth, or Castelnuovo-Mumford regularity of $J_{G\circ H}$ for all graphs $G$ and $H$. In this article, we provide a general formula for the dimension, depth and Castelnuovo-Mumford regularity of the binomial edge ideals of certain corona and corona-type (somewhat a generalization of corona) products of special interests. Additionally, we study the Cohen-Macaulayness, unmixedness and related properties of binomial edge ideals corresponding to above class of graphs. We have also added a short note on the reduction of the Bolognini-Macchia-Strazzanti Conjecture to all graphs with a diameter of $3$. 
\end{abstract}
\dedicatory{Dedicated to the memory of J\"urgen Herzog}
\maketitle
\tableofcontents
	
    {\let\thefootnote\relax\footnote{{{\bf{2020 Mathematics Subject Classification.}} 05E40, 13C13, 13D02\\ 
	{\bf{Key words and phrases.}} Binomial edge ideal, Castelnuovo–Mumford regularity, Depth, Extremal Betti number, Join of graphs, Corona of graphs}\\
	{\bf{Data availability statement.}} Not applicable.}}

\section{\bf Introduction}

In the past two decades, the dynamic intersection of combinatorics of a graph and the algebra of various ideals arising from it has become a vibrant area of research. One of the most extensively studied ideals in this context is the (monomial) edge ideal. Studying the Cohen-Macaulayness property of edge ideals is an important topic in this area. Though the complete classification of all simple graphs having Cohen-Macaulay edge ideal is not yet known, an exhaustive classification of bipartite graphs corresponding to the Cohen-Macaulay edge ideals is well understood. 

In 2010, Herzog et al. in \cite{HHHKR10} and Ohtani independently in \cite{Oht11} introduced the concept of binomial edge ideals for simple graphs --- the graphs having no loops or multiple edges. We refer the readers to the book \cite{HHO18} and \cite{Das22, SM18} for a detailed exploration of this topic. Consider a simple graph $G$ with the vertex set $V(G)=[n]:=\{1,2,\ldots,n\}$ and the edge set $E(G)$. Let $S = \K[x_1,\ldots,x_n,y_1,\ldots,y_n]$ be the polynomial ring in $2n$ variables over an arbitrary ﬁeld $\K$. Then, the binomial edge ideal corresponding to the graph $G$ denoted by $J_G$ is an ideal that is deﬁned as follows:
$$J_G = \left\langle m_{ij}: \{i, j\} \in E(G), 1 \leq i < j \leq n\right\rangle \subseteq S,$$ where $m_{ij}$ denotes the $(i,j)$-th $(2\times 2)$-minor $\det\begin{pmatrix} 
    x_i & x_j \\ 
    y_i & y_j 
\end{pmatrix}
=x_i y_j - x_j y_i$
of the following $(2\times n)$-matrix $$M_G:=
\begin{pmatrix} 
    x_1 & x_2 & \cdots  & x_n\\
    y_1 & y_2 & \cdots  & y_n\\
\end{pmatrix}.$$
The class of binomial edge ideals serves as a natural generalization of the \emph{determinantal ideal} $I_2$ generated by all the $(2 \times 2)$-minors of the $(2 \times n)$-matrix $M_G$ of indeterminates defined above. In particular, when $G$ is the complete graph $K_n$, the binomial edge ideal $J_G$ of $G$ coincides with the ideal $I_2$. 

Over the years, many researchers have paid attention to characterizing Cohen-Macaulay binomial edge ideals in the same spirit as in the case of classical edge ideals. Various endeavors have been made to address this challenge across different interesting families of graphs, viz., chordal, bipartite, (generalized) block, traceable, Hamiltonian, unicyclic, etc. Important contributions in this pursuit include the research in \cite{BMRS24,BMS18,BMS22,EHH11,LMRR23,LRR24,RR14,Rin13,Rin19,SMK18,SS22,SS24}. Specifically, in \cite{BMS22}, the authors introduced insightful combinatorial properties of Cohen-Macaulay binomial edge ideals, a contribution later leveraged in \cite{LMRR23} to computationally classify all such ideals for graphs with up to $12$ number of vertices. Recently, in \cite{BMRS24}, the authors developed an algorithm to characterize Cohen-Macaulay binomial edge ideals for the graphs with up to $15$ number of vertices. Despite these efforts, achieving a comprehensive classification of graphs with Cohen-Macaulay binomial edge ideals appears to be a formidable task. However, researchers have delved into a fundamental algebraic invariant, namely, the Castelnuovo-Mumford regularity of binomial edge ideals, employing the combinatorial intricacies of the underlying graph. This line of investigation has proven fruitful in the classification of specific Cohen-Macaulay binomial edge ideals, as evidenced in \cite{ERT20,RS23,SMK18}. The exploration of both algebraic properties and numerical invariants of binomial edge ideals has received very significant attention in recent times.


Throughout this article, unless explicitly stated otherwise, we assume the graphs under discussion are connected. The \emph{corona product} of two graphs, denoted as $G\circ H$, is defined as follows: take one copy of graph $G$ and $|V(G)|$ copies of graph $H$, connecting each vertex from the $i$-th copy of $H$ with the $i$-th vertex of the graph $G$. The notation $H_v$ refers to the copy of $H$ connected to vertex $v$ in $G\circ H$, and we use this notation quite often in the subsequent section. In particular, when $H=K_1$ (a \emph{point graph}), $G\circ H$ is equivalent to the \emph{whisker graph}, the graph which is obtained from a graph $G$ by adding a whisker to each of its vertices. This fascinating construction of the corona of two graphs first appeared in \cite{FH70}. As mentioned in \cite{FH70}, it immediately
follows from the definition of the corona that
\begin{center}
    $|V(G\circ H)|=|V(G)|(1+|V(H)|)$;\\[0.25cm]
    $|E(G\circ H)|=|E(G)|+|V(G)|(|V(H)|+|E(H)|)$.
\end{center}
Simple examples illustrate that, in general, $G\circ H$ is not equivalent to $H\circ G$. Furthermore, the associative law does not hold; i.e., $G_1 \circ(G_2 \circ G_3)$ and $(G_1 \circ G_2) \circ G_3$ represent distinct graphs for any three graphs $G_1, G_2, G_3$. 

Since the class of corona of any two graphs appears to be interesting, it is important to check the Cohen-Macaulayness of the binomial edge ideal of the corona product. It can be easily observed by studying the cutsets that very often, the binomial edge ideal of a corona product fails to be Cohen-Macaulay. Therefore it is evident that in the case when $J_{G\circ H}$ is indeed Cohen-Macaulay, the two graphs $G$ and $H$ are rather very special. In \cite{KSM15}, Kiani and Saeedi Madani briefly studied the corona product of two graphs. They investigated the strong correlation between unmixedness and the Cohen-Macaulayness properties of the binomial edge ideals associated with the corona of two graphs. They proved that in the case of the corona product of two graphs, the two notions --- unmixedness and Cohen-Macaulayness, in fact, are equivalent. One of the key results in this direction is --- ``\emph{for any two graphs $G$ and $H$ with $|V(G)|\geq2$, if $J_{G\circ H}$ is Cohen-Macaulay (equivalently, unmixed), both the graphs $G$ and $H$ have to be complete graphs}'' (cf. \cite[Theorem 5.4]{KSM15}). Since the unmixedness of a binomial edge ideal corresponding to any graph can be completely understood from the combinatorial data of the underlying graph, in the paper \cite{KSM15}, the authors didn't dig into the derivation of any formula for the well-known algebraic invariants such as depth, Castelnuovo-Mumford regularity, etc. of the binomial edge ideal corresponding to the corona product in order to study the Cohen-Macaulayness of the same.

A general formula for depth and Castelnuovo-Mumford regularity for the binomial edge ideals of the corona product purely in terms of those invariants of the two individual graphs would still be a very interesting but rather challenging question. In this article, we give a general formula for depth and Castelnuovo-Mumford regularity for the affine $\K$-algebra $S/J_{B\circ H}$, where $B$ is a Cohen-Macaulay closed graph and $H$ is an arbitrary graph.

\begin{notation*}{\rm 
Let $G$ be a graph on $V(G)$. We reserve the notation $S$ for the polynomial ring $\K[x_u, y_u : u \in V(G)]$ and for $v\in V(G)$, $S'$ for the polynomial ring $\K[x_u,y_u : u\in V(G)\setminus \{v\}]$.  If $H$ is any other graph with the vertex set $V(H)$, then we set $S_H = \K[x_u, y_u : u \in V(H)]$ and for $w\in V(H)$, set $S_H'=\K[x_u,y_u : u\in V(H)\setminus \{w\}]$.
}
\end{notation*}

\begin{convention*}{\rm
To be consistent, we reserve the following nomenclature throughout this article. A graph $G$ is said to be unmixed (resp. Cohen-Macaulay) if $J_G$, the corresponding binomial edge ideal, is unmixed (resp. Cohen-Macaulay).
}
\end{convention*}
\subsection{Structure and main results of the paper}\mbox{}

We have added the necessary preliminaries, both combinatorial and algebraic, in Section 2. The results obtained in the two subsequent sections (viz. Section 2 \& Section 3) are of two different flavors. Firstly, we build the combinatorial understanding of unmixed/accessible/Cohen-Macaulay binomial edge ideals of the corona product and related graphs (see Section 3). We conclude Section 3 by adding a short note on the reduction of Bolognini-Macchia-Strazzanti Conjecture (B-M-S Conjecture, in short) to all simple graphs with diameter 3 (see Question \ref{Ques: Reduction of B-M-S Conjecture}). Section 4 is a bit technical and is solely devoted to the computation of dimension, depth, and regularity formulae for certain binomial edge ideals of special interest.

We prove the following main results:

\begin{mainthm}[Theorems \ref{Thm: Unmixedness of L-corona of G and H implies H is unmixed}, \ref{Thm: Acc. system of L-corona of G and H implies H has acc. system}]\label{thm:mainA}
    Let $G$ and $H$ be two graphs such that $|V(G)|>1$. For a non-empty subset $L$ of $V(G)$, if $J_{G\circ_L H}$ is unmixed (resp. accessible), then $J_H$ is also unmixed (resp. accessible).
\end{mainthm}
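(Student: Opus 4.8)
The plan is to reduce both statements to a single \emph{lifting lemma} that embeds the cut-set combinatorics of $H$ into that of $G\circ_L H$. Fix a vertex $v\in L$ (which exists since $L\neq\emptyset$), and for $T\subseteq V(H)$ write $T_{H_v}$ for the corresponding subset of the copy $H_v$. I would first prove: if $T$ is a cut set of $H$, then $\mathcal{T}:=\{v\}\cup T_{H_v}$ is a cut set of $G\circ_L H$, and, writing $c_{\Gamma}(\cdot)$ for the number of connected components of $\Gamma$ after deletion,
\[
c_{G\circ_L H}(\mathcal{T})=c_G(\{v\})+c_H(T).
\]
The component count is immediate once one observes that deleting $v$ severs $H_v$ from the rest: the survivors split into the $c_G(\{v\})$ components of $G\setminus\{v\}$ (the attached copies $H_u$, $u\in L\setminus\{v\}$, do not change this count, each being coned to its surviving vertex $u$) together with the $c_H(T)$ components of $H_v\setminus T_{H_v}$. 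To see that $\mathcal{T}$ is a cut set I would check the cut-vertex property vertex by vertex: for $t\in T_{H_v}$ the deletion of $v$ already isolates $H_v$, so $t$ being a cut vertex of $G\circ_L H$ relative to $\mathcal{T}$ is exactly the statement that $t$ is a cut vertex of $H$ relative to $T$; and for $v$ itself, the graph $(G\circ_L H)\setminus T_{H_v}$ is connected (everything hangs together through $G$ and the cone at $v$), while deleting $v$ produces the nonempty piece $H_v\setminus T_{H_v}$ (nonempty because every cut set of $H$ is a proper subset of $V(H)$) alongside the nonempty piece $G\setminus\{v\}$, hence at least two components. This last step is exactly where the hypothesis $|V(G)|>1$ enters: it guarantees $V(G)\setminus\{v\}\neq\emptyset$, so that $v$ genuinely separates.

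For the unmixedness statement I would argue directly. Since $G$ is connected and each $H_v$ is coned to a vertex of $G$, the graph $G\circ_L H$ is connected, so (by the standard parametrization of the minimal primes by cut sets and the associated height formula recalled in the preliminaries) $J_{G\circ_L H}$ is unmixed if and only if $c_{G\circ_L H}(\mathcal{S})=|\mathcal{S}|+1$ for every cut set $\mathcal{S}$ of $G\circ_L H$. Applying this first to $\mathcal{S}=\{v\}$ (the case $T=\emptyset$ of the lifting lemma) gives $c_G(\{v\})+1=2$, so $c_G(\{v\})=1$; applying it next to $\mathcal{S}=\{v\}\cup T_{H_v}$ for an arbitrary cut set $T$ of $H$ gives $1+c_H(T)=|T|+2$, that is $c_H(T)=|T|+1$. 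As this holds for every cut set $T$ of $H$, the same criterion shows $J_H$ is unmixed.

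For accessibility I would use that a graph is accessible precisely when it is unmixed and its collection of cut sets is an accessible set system; unmixedness of $H$ is already supplied above, so only the set-system accessibility of the cut sets of $H$ remains. Given a nonempty cut set $T$ of $H$, lift it to $\mathcal{T}=\{v\}\cup T_{H_v}$, a nonempty cut set of $G\circ_L H$, and invoke accessibility of $G\circ_L H$ to obtain $s\in\mathcal{T}$ with $\mathcal{T}\setminus\{s\}$ again a cut set. The crucial point is that $s$ cannot be $v$: for nonempty $T$ the set $T_{H_v}$ is never a cut set of $G\circ_L H$, since as long as $v$ is retained the cone keeps $(G\circ_L H)\setminus(T_{H_v}\setminus\{t\})$ connected for every $t$, so no element of $T_{H_v}$ is a cut vertex. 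Hence $s=t_0\in T_{H_v}$, and $\{v\}\cup(T_{H_v}\setminus\{t_0\})$ is a cut set of $G\circ_L H$; restricting to the copy $H_v$ (again isolated by the removal of $v$) shows that $T\setminus\{t_0\}$ is a cut set of $H$, which is exactly the accessibility step at $T$.

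The main obstacle, and the only place requiring genuine care, is the lifting lemma — specifically verifying that $v$ is a cut vertex relative to $\mathcal{T}$ (where properness of cut sets of $H$ and the hypothesis $|V(G)|>1$ both enter) and that the cut-vertex property for the vertices of $T_{H_v}$ transfers faithfully between $H$ and $G\circ_L H$ once $v$ is deleted. Everything else is bookkeeping with the component-counting identity.
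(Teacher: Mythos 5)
Your proposal is correct and follows essentially the same route as the paper: your lifting lemma (that $\{v\}\cup T_{H_v}$ is a cutset of $G\circ_L H$ with $\omega\bigl((G\circ_L H)\setminus(\{v\}\cup T_{H_v})\bigr)=\omega(G\setminus v)+\omega(H\setminus T)$) is precisely the Claim inside the paper's proof of Theorem \ref{Thm: Unmixedness of L-corona of G and H implies H is unmixed}, with the $T=\emptyset$ case recovering the paper's separate step that no vertex of $L$ is a cut vertex of $G$. Your accessibility argument likewise matches Theorem \ref{Thm: Acc. system of L-corona of G and H implies H has acc. system}, including the key observation that the removable element cannot be $v$ because $T_{H_v}$ alone leaves $G\circ_L H$ connected and hence is not a cutset.
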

The converse to the above result is not true in general. For this, we gave an example.\\

The (generalized) $L$-corona product of any graph with the complete graph, i.e., the graphs $K_n\circ_\ell H$ for all $n \geq 2$ and any graph $H$ are of special interest. We prove the following results:

\begin{mainthm}[{Theorem \ref{Thm: Combined -- l-corona with complete is unmixed, accessible, CM iff the other is unmixed, accessible, CM respectively}}]\label{thm:mainB}
    Let $H$ be any graph. For integers $n$ and $\ell$ such that $n\geq 2$ and $1\leq \ell<n$, $K_n\circ_\ell H$ is unmixed (or accessible or Cohen-Macaulay) if and only if $H$ is unmixed (or accessible or Cohen-Macaulay, respectively).
\end{mainthm}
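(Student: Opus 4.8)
The plan is to treat the three equivalences one at a time, handling unmixedness and accessibility by a direct description of the cutsets of $K_n\circ_\ell H$ and reducing Cohen--Macaulayness to a comparison of depth and dimension. Since $K_n$ is vertex--transitive, $K_n\circ_\ell H$ is isomorphic to the set--corona $K_n\circ_L H$ for any $L\subseteq V(K_n)$ with $|L|=\ell$, so Theorem~\ref{thm:mainA} applies verbatim and already yields the forward implications: if $K_n\circ_\ell H$ is unmixed (resp.\ accessible), then so is $H$. It therefore remains to prove the reverse implications for unmixedness and accessibility, and both implications for Cohen--Macaulayness.

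Write $G:=K_n\circ_\ell H$ with central clique $v_1,\dots,v_n$ and copies $H_1,\dots,H_\ell$ coned to $v_1,\dots,v_\ell$, and let $\mathcal{C}(H)$ denote the cutsets of $H$. The first step is the combinatorial dictionary
\[
T\in\mathcal{C}(G)\iff T=\bigcup_{i\in A}\bigl(\{v_i\}\cup T_i\bigr)\ \text{ with }\ A\subseteq\{1,\dots,\ell\},\ T_i\in\mathcal{C}(H),
\]
together with the component count $c_G(T)=1+\sum_{i\in A}c_H(T_i)$, where $c_G$ and $c_H$ count connected components after deletion. The hypothesis $\ell<n$ enters exactly here: some central vertex carries no copy and hence lies in no cutset, so the clique on $V(K_n)\setminus T$ never disappears and always forms a single central component, while a vertex of $H_i$ can become a cut vertex of $G$ only after its anchor $v_i$ is deleted. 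From $|T|+1=1+\sum_{i\in A}(1+|T_i|)$, the unmixedness condition $c_G(T)=|T|+1$ on all cutsets is equivalent to $\sum_{i\in A}\bigl(c_H(T_i)-|T_i|-1\bigr)=0$ for all admissible $T$; specializing to $A=\{1\}$ forces $c_H(T_1)=|T_1|+1$ for every $T_1\in\mathcal{C}(H)$, so $H$ is unmixed, while conversely unmixedness of $H$ makes each summand vanish. For accessibility one checks that $\mathcal{C}(G)$ is accessible iff $\mathcal{C}(H)$ is: from a nonempty cutset of $G$ one deletes either a lone anchor $v_i$ (when all $T_i$ are empty) or, via accessibility of $\mathcal{C}(H)$, a suitable vertex of a nonempty $T_i$; conversely, applying accessibility of $\mathcal{C}(G)$ to $\{v_1\}\cup T_1$ cannot remove $v_1$ while $T_1\neq\emptyset$, so it returns a $w\in T_1$ with $T_1\setminus\{w\}\in\mathcal{C}(H)$.

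For Cohen--Macaulayness I would pass through depth and dimension. Maximizing $\dim S/P_T(G)=|V(G)|-|T|+c_G(T)$ over the cutsets above (the copies contribute independently and including all of them is optimal) gives the dimension formula
\[
\dim(S/J_G)=n-\ell+1+\ell\cdot\dim(S_H/J_H).
\]
The crux is the companion depth formula
\[
\depth(S/J_G)=n-\ell+1+\ell\cdot\depth(S_H/J_H).
\]
Granting both and using $\ell\geq 1$, one gets $G$ Cohen--Macaulay $\iff\depth(S/J_G)=\dim(S/J_G)\iff\depth(S_H/J_H)=\dim(S_H/J_H)\iff H$ Cohen--Macaulay, which settles both directions simultaneously.

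Establishing the depth formula is the main obstacle. I would induct on $\ell$, peeling off the copy $H_\ell$ at the cut vertex $v_\ell$, which splits $G$ into $K_n\circ_{\ell-1}H$ and the cone $\{v_\ell\}*H_\ell$ glued along $v_\ell$. The standard short exact sequences for binomial edge ideals attached to deleting $v_\ell$ (built from the colon ideal $J_G:(x_{v_\ell},y_{v_\ell})$ and from $J_G+(x_{v_\ell},y_{v_\ell})$) relate $\depth(S/J_G)$ to the depths of the two pieces, with the base inputs being that $K_n$ is Cohen--Macaulay, so $\depth(S/J_{K_n})=n+1$, and a depth computation for the cone that contributes the increment $\depth(S_H/J_H)-1$ per copy. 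The delicate point is to extract \emph{exact} depth values, rather than mere inequalities, from these sequences; it is precisely the completeness of the central clique $K_n$ that pins down the connecting maps and forces the depths to add, and carrying this bookkeeping uniformly in $\ell$ is where the real work lies.
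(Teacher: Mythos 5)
Your treatment of unmixedness and accessibility is correct and runs essentially parallel to the paper's: the forward directions are Theorem~\ref{thm:mainA}, and your ``dictionary'' for cutsets of $K_n\circ_\ell H$ is exactly the content of Proposition~\ref{Prop: cutset of corona} together with the claim $T_0\subseteq L$ inside the paper's proof of Corollary~\ref{Cor: t-corona with complete is unmixed iff the other is unmixed} (your justification via the simplicial vertices $v_{\ell+1},\dots,v_n$ and \cite[Proposition 2.1]{RR14} is a cleaner route than the paper's component-counting argument, and your dictionary also silently repairs the paper's aside ``$N\neq\emptyset$; otherwise $T=\emptyset$'' in Theorem~\ref{Thm: If H has acc. sys of cutsets then l-corona of H with complete graph also has the same}, since a cutset can consist of anchors alone). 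Your dimension formula, obtained by maximizing $|V(G)|-|T|+c_G(T)$ over the dictionary, is a valid and arguably more direct alternative to the paper's inductive Ohtani-sequence proof of Theorem~\ref{Thm: Dimension of l-corona with complete graph}.

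The genuine gap is in the Cohen--Macaulay part. Your own reduction makes everything hinge on the exact depth formula $\depth(S/J_G)=n-\ell+1+\ell\,\depth(S_H/J_H)$, and this is asserted, not proved. The decomposition you sketch is the paper's (Ohtani's Lemma~\ref{Lem: Non-simplicial vertex SES} at an anchor $v$, with $G_v=K_{n+h}\circ_{\ell-1}H$, $G_v\setminus v=K_{n+h-1}\circ_{\ell-1}H$, $G\setminus v=H\sqcup(K_{n-1}\circ_{\ell-1}H)$), but the Depth Lemma applied to the sequence \eqref{ohtani-ses} only pins down $\depth(S/J_G)$ when the depths of the middle and right terms are strictly comparable: writing $d_H=\depth(S_H/J_H)$, the relevant difference is $d_H-h$, so the lemma gives the exact value when $d_H<h$ or $d_H=h+1$, but in the boundary case $d_H=h$ it yields only the lower bound, and ``the completeness of the central clique pins down the connecting maps'' is not an argument. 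The paper closes precisely this case with a graded Betti number analysis threaded through the induction (Proposition~\ref{Prop: Depth for 1-corona with complete graph}, Theorem~\ref{Thm: Depth and regularity for t-corona with complete graph}): for non-complete $H$ one takes an extremal Betti number $\beta_{p_H,p_H+r_H}(S_H/J_H)$ with $r_H\geq 2$ via \cite[Lemma 3.3]{KS20}, uses $p_H\geq h-1$ from \cite[Theorem B]{BN17} to split into the cases $p_H=h-1$ and $p_H\geq h$, and chases the long exact Tor sequence \eqref{ohtani-tor} in homological degree $p=n+\ell-1+\ell p_H$ to show $\beta_{p,p+\ell r_H}(S/J_G)$ or $\beta_{p,p+\ell r_H+1}(S/J_G)$ is nonzero; this forces $\pd(S/J_G)\geq p$, hence the upper depth bound by Auslander--Buchsbaum. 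Your sketch also leaves the base case $\ell=1$ unsupplied (the paper uses $K_n\circ_1H=v\ast(K_{n-1}\sqcup H)$ with the cone depth formula \cite[Theorem 3.9]{KS20} and \cite[Theorem 2.1]{SMK18}), and the case of complete $H$ needs the separate block-graph inputs \cite[Theorem 1.1]{EHH11} and \cite[Theorem 8]{HR18}, since the extremal-Betti machinery presupposes $r_H\geq 2$. Without the Tor/extremal-Betti step the induction does not close, so as written the Cohen--Macaulay equivalence remains unproven.
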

\begin{mainthm}[{Theorem \ref{Thm: Dimension of l-corona with complete graph}}]\label{thm:mainC}
    Let $H$ be any connected graph and $n\geq 1$ any integer. Consider the graph $G:=K_n\circ_\ell H$ with $1\leq \ell \leq n$. Then 
    \begin{equation*}
        \dim(S/J_G)= n-\ell +1+\ell \dim (S_H/J_H),
    \end{equation*}
    where $|V(H)|= h$. Moreover, if $J_H$ is unmixed then
    \begin{equation*}
        \dim(S/J_{K_n\circ H})=n+nh+1
    \end{equation*}
\end{mainthm}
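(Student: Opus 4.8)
The plan is to read off the Krull dimension from the combinatorial description of the minimal primes of a binomial edge ideal due to Herzog et al.~\cite{HHHKR10}. For any graph $\Gamma$ on $N$ vertices one has $\dim(S/J_\Gamma)=\max_{T}\big(N-|T|+c_\Gamma(T)\big)$, where $T$ ranges over all subsets of $V(\Gamma)$ and $c_\Gamma(T)$ is the number of connected components of the induced subgraph on $V(\Gamma)\setminus T$; since a non-cutset can always be shrunk without decreasing the quantity, the maximum is attained on a cutset and I may optimize freely over arbitrary $T$. I would apply this to $G=K_n\circ_\ell H$, whose $N=n+\ell h$ vertices comprise the core vertices $v_1,\dots,v_n$ (spanning a clique) together with the coned copies $H_{v_1},\dots,H_{v_\ell}$, and I would write a candidate $T$ as $T=A\cup\bigcup_{i\le\ell}B_i$ with $A\subseteq\{v_1,\dots,v_n\}$ the deleted core vertices and $B_i\subseteq V(H_{v_i})$.

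The combinatorial core is to evaluate $c_G(T)$ through this decomposition. As the core vertices span $K_n$, any surviving core vertices lie in a single component, and since each $v_i$ is coned to all of $H_{v_i}$, the decisive question for each copy is whether $v_i\in A$. If $v_i\notin A$, every surviving vertex of $H_{v_i}$ is adjacent to $v_i$ and is absorbed into the core component, so deleting vertices inside such a copy only wastes the count; the optimal choice is $B_i=\emptyset$, and the copy contributes $h$ to $N-|T|+c_G(T)$. If $v_i\in A$, the copy is severed from the core and contributes $c_H(B_i)$ components, so that after optimizing $B_i$ it contributes $\max_{B}\big(h-|B|+c_H(B)\big)=\dim(S_H/J_H)$; that is, a severed copy contributes exactly the dimension of $H$'s own binomial edge ideal. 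Finally, deleting a core vertex carrying no copy is pure loss, so I would first reduce to $A\subseteq\{v_1,\dots,v_\ell\}$.

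Setting $k=|A|$ (the number of severed copies) and $\delta=\dim(S_H/J_H)$, the analysis above collapses the optimization to maximizing the affine function $f(k)=(n+1+\ell h)+k(\delta-h-1)$ over $0\le k\le\ell$, under the proviso that at least one core vertex survives so that the core component is genuinely present. Connectivity of $H$ forces $\delta\ge h+1$, hence $f$ is nondecreasing and is maximized by severing as many copies as allowed. When $\ell<n$ one may take $k=\ell$ while retaining the $n-\ell\ge1$ copy-free core vertices, giving $f(\ell)=n-\ell+1+\ell\,\delta$, which is the claimed dimension. The ``moreover'' then drops out by setting $\ell=n$ and invoking that a connected unmixed $H$ has $\dim(S_H/J_H)=h+1$ (the minimal prime $P_\emptyset$, of dimension $h+1$, is top-dimensional when $J_H$ is unmixed); substitution yields $n-n+1+n(h+1)=n+nh+1$.

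The real subtlety lies at the endpoint $\ell=n$. There, severing all copies forces $A=\{v_1,\dots,v_n\}$, which destroys the core component and so loses the $+1$: the naive value $f(n)$ is illegitimate, and one must instead weigh $f(n-1)$ (retain a single hub) against the all-severed value $n\delta$. Checking that these reconcile with the stated formula is precisely where the hypothesis $\dim(S_H/J_H)=h+1$ enters, which is why the full-corona case is phrased under unmixedness. I expect this careful accounting of the single possibly-lost component, rather than any hard algebra, to be the main obstacle; the remainder is the routine verification that cutting inside retained copies or deleting copy-free core vertices can never help.
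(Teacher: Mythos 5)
Your route is genuinely different from the paper's. The paper proves this by induction on $\ell$: its base case $\ell=1$ rewrites $K_n\circ_1H$ as the cone $v\ast(K_{n-1}\sqcup H)$ and invokes the cone-dimension theorem of Kumar--Sarkar \cite[Theorem 4.4]{KS19}, and its inductive step applies Ohtani's Lemma \ref{Lem: Non-simplicial vertex SES} at a coned vertex $v$, computing $G_v=K_{n+h}\circ_{\ell-1}H$, $G_v\setminus v=K_{n+h-1}\circ_{\ell-1}H$, $G\setminus v=H\sqcup(K_{n-1}\circ_{\ell-1}H)$ and comparing dimensions along the sequence \eqref{ohtani-ses}. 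You instead read the dimension off the minimal primes of \cite{HHHKR10} and optimize $N-|T|+c_G(T)$ directly; your reduction of each severed copy to $\max_B\bigl(h-|B|+c_H(B)\bigr)=\dim(S_H/J_H)=:\delta$, the elimination of deletions inside retained copies and of copy-free core vertices, and the resulting affine function $f(k)=(n+1+\ell h)+k(\delta-h-1)$ with $\delta\geq h+1$ are all correct, and for $\ell<n$ they give $f(\ell)=n-\ell+1+\ell\delta$ as claimed. Your method is self-contained and exhibits the optimal cutsets explicitly, whereas the paper's recursion sets up bookkeeping that it reuses for the depth and regularity computations later in Section 4.

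Your hesitation at the endpoint $\ell=n$ is well founded, and you should not have hedged: your own analysis settles the point, against the unrestricted claim in the statement. For $\ell=n$ the maximum is $\max\{f(n-1),\,n\delta\}=\max\{2+h+(n-1)\delta,\ n\delta\}$, which equals the claimed $1+n\delta$ if and only if $\delta=h+1$; whenever $\delta\geq h+2$ the true value is $n\delta<1+n\delta$. Concretely, take $n=1$ and $H=K_{1,3}$ (so $h=4$, $\delta=6$, $J_H$ not unmixed): the cone $v\ast K_{1,3}$ has dimension $6$, attained at $T=\emptyset$ and at $T=\{v,\text{center}\}$, not $1+\delta=7$. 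This also pinpoints where the paper's induction goes astray: its base case asserts that $H'=K_{n-1}\sqcup H$ is disconnected, which fails precisely when $n=1$, and the descent for $\ell=n$ runs $(n,n)\to(n-1,n-1)\to\cdots\to(1,1)$, bottoming out exactly at that broken case; for $\ell<n$ the descent stops at $(n-\ell+1,1)$ with $n-\ell+1\geq 2$ and is sound. So your argument correctly proves the formula for $1\leq\ell<n$ and the unmixed ``moreover'' (where $\delta=h+1$ makes $f(n-1)=n+nh+1$ dominate $n\delta$), and in the remaining case you should state the correct general value $\dim(S/J_{K_n\circ H})=\max\{2+h+(n-1)\delta,\ n\delta\}$ rather than reconcile with the stated formula, which holds at $\ell=n$ only under $\delta=h+1$.
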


This dimension formula extends the result in \cite[Corollary 5.2]{KSM15}.
\begin{mainthm}[{Theorem \ref{Thm: Depth for corona with complete graph}}]\label{thm:mainD}
For any connected graph $H$ and any integer $n\geq 1$, let $G:=K_n\circ H$. Then the following assertions hold.
    \begin{enumerate}
        \item $\depth(S/J_G)=
        \begin{cases}
            1+n\cdot\depth(S_H/J_H), & \text{whenever } H \text{ is complete};\\
            n\cdot\depth(S_H/J_H), & \text{whenever } H \text{ is non-complete}.
        \end{cases}
        $\\
        \item $\reg(S/J_G)=
        \begin{cases}
            n+1, & \text{whenever } H \text{ is complete};\\
            n\cdot\reg(S_H/J_H), & \text{whenever } H \text{ is non-complete}.
        \end{cases}
        $
        \item Assume that $H$ is not a complete graph, $\pd(S_H/J_H)=p_H$ and $\beta_{p_H,p_H+r_H}(S_H/J_H)$ is an extremal Betti number of $S_H/J_H$ for $r_H\geq 2$. Then $\beta_{p,p+nr_H}(S/J_G)$ is an extremal Betti number if $n=2$ and $\beta_{p,p+nr_H+1}(S/J_G)$ is an extremal Betti number if $n\geq 3$, where $p=\pd(S/J_G)=2n+np_H.$
    \end{enumerate}
\end{mainthm}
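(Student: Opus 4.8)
The plan is to induct on $n$, using the cone $K_1\circ H$ as the base case and peeling off one petal at a time by Ohtani's Lemma. Write $G=K_n\circ H$ and let $v$ be one of the apex vertices (a vertex of the underlying $K_n$). Because the copy $H_v$ of $H$ attached at $v$ can only be reached through $v$, the vertex $v$ is a cut vertex with $G\setminus v=H_v\sqcup(K_{n-1}\circ H)$, while the graph $\wt G_v$ obtained by completing $N_G(v)$ into a clique is a corona-type graph: the set $\{v\}\cup V(H_v)$ together with the other $n-1$ apexes spans a clique, and each of those $n-1$ apexes still carries its own petal $H$. Ohtani's Lemma gives $J_G=J_{\wt G_v}\cap\big((x_v,y_v)+J_{G\setminus v}\big)$, whence the short exact sequence
\begin{equation*}
0\longrightarrow S/J_G\longrightarrow S/J_{\wt G_v}\ \oplus\ S/\big((x_v,y_v)+J_{G\setminus v}\big)\longrightarrow S/\big((x_v,y_v)+J_{\wt G_v\setminus v}\big)\longrightarrow 0.
\end{equation*}
Two of the three terms are within reach: the disjoint-union summand is a tensor product $S_H/J_H\otimes_\K(S/J_{K_{n-1}\circ H})$ by a K\"unneth argument, so (writing $d_H=\depth(S_H/J_H)$ and $r_H=\reg(S_H/J_H)$) its depth and regularity are $n\,d_H$ and $n\,r_H$ by the inductive hypothesis, and both $\wt G_v$ and $\wt G_v\setminus v$ belong to the generalized corona class analyzed in the earlier sections, so their invariants are available.

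For the regularity in the non-complete case I would bound from both sides. The lower bound $\reg(S/J_G)\ge n\,r_H$ is immediate: the induced subgraph on $V(H_1)\cup\dots\cup V(H_n)$ is the disjoint union of $n$ copies of $H$, whose regularity is $n\,r_H$ by the K\"unneth formula, and regularity is monotone under induced subgraphs. For the matching upper bound I apply the regularity lemma to the displayed sequence: the middle module has regularity $\max\{\reg(S/J_{\wt G_v}),\,n\,r_H\}$ and the cokernel one less, so once the corona-type estimate $\reg(S/J_{\wt G_v})\le n\,r_H$ is in place the induction closes at $n\,r_H$. The complete case is different in flavour: there $r_H=1$ and $K_n\circ H$ is Cohen--Macaulay by Theorem \ref{thm:mainB}, so I would feed complete-graph pieces (all of regularity $1$) into the same sequence and show inductively that each added petal raises the regularity by exactly one, landing at $n+1$ with the base case checked directly via the induced $P_4$; the case $n=1$ is genuinely exceptional, since $K_1\circ K_h=K_{h+1}$ has regularity $1$, and must be excluded from this branch.

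The depth is handled by the same sequence and the depth lemma. In the complete case Theorem \ref{thm:mainB} gives Cohen--Macaulayness and Theorem \ref{thm:mainC} gives $\dim(S/J_G)=1+n(h+1)$, so $\depth(S/J_G)=\dim(S/J_G)=1+n\cdot\depth(S_H/J_H)$ with no further work. In the non-complete case, once the corona-type estimates guarantee that $S/J_{\wt G_v}$ and the cokernel do not have depth below the tensor-product value $n\,d_H$, the depth lemma yields $\depth(S/J_G)\ge n\,d_H$; the reverse inequality is the delicate point, and I expect to establish it by chasing the long exact sequence of local cohomology attached to the short exact sequence, showing that $H^{i}_{\mathfrak m}(S/J_G)$ vanishes for $i<n\,d_H$ and is nonzero at $i=n\,d_H$ because the corresponding local cohomology of the tensor-product summand survives, the relevant connecting maps vanishing in that degree. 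Once the depth is known, $\pd(S/J_G)=2n(1+h)-n\,d_H=2n+n\,p_H$ follows from the Auslander--Buchsbaum formula, which is exactly the value of $p$ in part (3).

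Finally, for the extremal Betti number I would pass to the long exact sequence of $\Tor(-,\K)$ induced by the short exact sequence and locate the corner of the Betti table of $S/J_G$. The tensor-product summand $S_H/J_H\otimes_\K(S/J_{K_{n-1}\circ H})$ has a resolution that is the tensor product of the two factor resolutions, so its extremal Betti number sits in homological degree $p$ and internal degree governed by $r_H+(n-1)r_H=n\,r_H$ via the inductive hypothesis, while $S/J_{\wt G_v}$ contributes a second candidate corner. The main obstacle, and the source of the dichotomy in part (3), is to show that the connecting homomorphisms in the $\Tor$ sequence vanish in the critical multidegrees, so that exactly one candidate survives as the extremal Betti number of $S/J_G$: when $n=2$ the tensor-product corner is reached without interference and the extremal number appears in degree $p+n\,r_H$, whereas for $n\ge 3$ the accumulated contribution of the enlarged clique in $\wt G_v$ shifts the surviving corner up by one, placing it in degree $p+n\,r_H+1$. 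Keeping control of $\wt G_v$ throughout the induction, and pinning down this one-step shift, is where the real work lies.
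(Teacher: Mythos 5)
Your skeleton is exactly the paper's: the same induction on $n$, the same application of Ohtani's Lemma at an apex vertex $v$ with the identifications $G_v=K_{n+h}\circ_{n-1}H$, $G_v\setminus v=K_{n+h-1}\circ_{n-1}H$, $G\setminus v=H\sqcup(K_{n-1}\circ H)$, the same use of the $\ell$-corona results (Theorem \ref{Thm: Depth and regularity for t-corona with complete graph}) for the middle and cokernel terms, and the same K\"unneth treatment of the disjoint-union piece; your observation that $n=1$ with $H$ complete is genuinely exceptional (since $K_1\circ K_h=K_{h+1}$ has regularity $1$) is correct and worth making. But three steps have real problems. First, in the complete case you invoke Theorem \ref{thm:mainB}, which is stated only for $1\leq \ell<n$ and says nothing about the full corona $\ell=n$; Cohen--Macaulayness of $K_n\circ K_h$ must instead come from \cite[Theorem 5.4]{KSM15} or, as the paper does, from the block-graph results \cite[Theorem 1.1]{EHH11} and \cite[Theorem 8]{HR18} --- the latter also gives $\reg(S/J_G)=\iv(G)+1=n+1$ immediately, making your petal-by-petal regularity induction with a $P_4$ base unnecessary.

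Second, the depth upper bound, which you yourself identify as the delicate point, is left as an unsubstantiated expectation: you assert that the connecting maps in the local cohomology sequence vanish in the critical degree without supplying any mechanism. Note that when $\depth(S_H/J_H)<h+1$ the Depth Lemma alone closes the gap, since the cokernel has depth $h+1+(n-1)\depth(S_H/J_H)$, strictly larger than the middle term's $n\cdot\depth(S_H/J_H)$; only the boundary case $\depth(S_H/J_H)=h+1$ requires more, and there the paper's device is a graded $\Tor$ computation at homological degree $p=2n+np_H$ rather than local cohomology: by \cite[Theorem B]{BN17} one has $p_H\geq h-1$, so $\pd(S/J_{G_v})=p+h-2-p_H<p$ (the clique piece is invisible at level $p$) and $\pd$ of the cokernel is at most $p$ (killing $\Tor_{p+1}$), while $\reg$ of the cokernel equals $1+(n-1)r_H<nr_H$ precisely because $r_H\geq 2$; the long exact sequence \eqref{ohtani-tor} then yields $\Tor_p^S(S/J_G,\K)_{p+j}\simeq \Tor_p^S\bigl(S/((x_v,y_v)+J_{G\setminus v}),\K\bigr)_{p+j}$ for $j\geq nr_H$, the K\"unneth corner of the right-hand side is nonzero, and Auslander--Buchsbaum finishes. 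This degree bookkeeping is the entire content of the step, and none of it appears in your sketch. Third, your explanation of the $n=2$ versus $n\geq 3$ dichotomy in part (3) is wrong: $\wt{G}_v$ (the paper's $G_v$) cannot ``shift the surviving corner,'' because, as just noted, it contributes nothing in homological degree $p$; whatever corner survives is inherited from the $G\setminus v$ piece, and your own K\"unneth computation places that corner at shift $r_H+s_{n-1}$, where $s_{n-1}$ is the corner shift of $K_{n-1}\circ H$. Moreover a nonzero $\beta_{p,p+nr_H+1}$ would force $\reg(S/J_G)\geq nr_H+1$, in tension with part (2), so the one-step shift you set out to ``pin down'' is not produced by interference from the enlarged clique; a correct account has to track the corner of the $G\setminus v$ piece through the induction, which is what the paper does via Theorem \ref{Thm: Depth and regularity for t-corona with complete graph} and \cite[Theorem 3.10]{KS20} at the base.
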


More generally, we prove the following:

\begin{mainthm}[{Theorem \ref{Thm: Depth for corona with CM closed graph}}]\label{thm:mainE}
Let $B$ be a Cohen-Macaulay closed graph and $H$ any connected graph with $|V(B)|=b$. Consider the graph $G:=B\circ H$ for $n \geq 1$. Then 
\begin{enumerate}
    \item $\depth(S/J_G)=
\begin{cases}
1+b \cdot \depth(S_H/J_H), &\text{if } H \text{ is complete};\\
b\cdot \depth(S_H/J_H), &\text{otherwise}.
\end{cases}
$
    \item $\reg(S/J_G)=
        \begin{cases}
            b+1, & \text{if } H \text{ is complete};\\
            b\cdot\reg(S_H/J_H), & \text{otherwise }.
        \end{cases}
        $
        \item Assume that $H$ is not a complete graph, $\pd(S_H/J_H)=p_H$ and $\beta_{p_H,p_H+r_H}(S_H/J_H)$ is an extremal Betti number of $S_H/J_H$ for $r_H\geq 2$. Then $\beta_{p,p+br_H+1}(S/J_G)$ is an extremal Betti number, where $p=\pd(S/J_G)=2b+bp_H.$
\end{enumerate}
\end{mainthm}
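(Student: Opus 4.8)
The plan is to reduce everything to the complete-graph corona already handled in Theorem~\ref{thm:mainD} and to propagate it along the clique decomposition of $B$. Since $B$ is Cohen--Macaulay and closed, the Ene--Herzog--Hibi structure theorem presents it as a \emph{path of cliques}: there are complete subgraphs $F_1,\dots,F_r$ with $V(F_i)\cap V(F_{i+1})=\{c_i\}$ a single cut vertex and $V(F_i)\cap V(F_j)=\emptyset$ when $|i-j|\ge 2$. I would induct on $r$, the base case $r=1$ being $B=K_b$, which is exactly Theorem~\ref{thm:mainD} with $n=b$. The elementary sub-block that seeds even that case is the cone $K_1\circ H=\{v\}\ast H$, whose invariants I would settle first and which already displays the dichotomy between complete and non-complete $H$.

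The engine is an Ohtani-type Mayer--Vietoris sequence. Every vertex $v$ of $B$ is a cut vertex of $G=B\circ H$ separating its attached copy $H_v$ from the remainder, so the decomposition $J_G=J_{G_v}\cap\big((x_v,y_v)+J_{G\setminus v}\big)$ (here $G_v$ makes the neighbourhood $N(v)$ a clique) yields the exact sequence
\[
0\to S/J_G \to S/\big((x_v,y_v)+J_{G\setminus v}\big)\ \oplus\ S/J_{G_v}\to S/\big((x_v,y_v)+J_{(G_v)\setminus v}\big)\to 0 .
\]
The decisive feature is that deleting $v$ disconnects $G$, so $S/\big((x_v,y_v)+J_{G\setminus v}\big)$ is a tensor product over $\K$ of the coordinate rings of the connected components of $G\setminus v$; since $\depth$, $\reg$ and $\pd$ are additive on such $\K$-tensor products, this is exactly the mechanism that turns the local contribution of one copy of $H$ into the additive/multiplicative shape of the asserted formulas. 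For the cone block one has $G\setminus v=H$, while $G_v=K_{h+1}$ and $(G_v)\setminus v=K_h$; the two complete pieces are Cohen--Macaulay of known depth with $\reg=1$, and the quotient $S/\big((x_v,y_v)+J_H\big)\cong S_H/J_H$ enters as the relevant induced-subgraph datum. Feeding this into the depth lemma and the standard regularity short-exact-sequence inequalities, and using monotonicity of regularity under induced subgraphs for the matching lower bound, gives $\depth(S/J_{\{v\}\ast H})=\depth(S_H/J_H)$ and $\reg(S/J_{\{v\}\ast H})=\reg(S_H/J_H)$ when $H$ is non-complete, and the extra ``$+1$'' once $H$ is complete and the block $G_v=K_{h+1}$ takes over.

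With the cone understood, the inductive step peels the terminal clique $F_r$ together with the cones at its non-cut vertices. Writing $B=B'\cup F_r$ with $B'\cap F_r=\{c_{r-1}\}$ and attributing the cone at $c_{r-1}$ to one side, I realise $G$ as $B'\circ H$ glued at the single vertex $c_{r-1}$ to a block of the form $K_{m_r}\circ_\ell H$, whose unmixed/accessible/Cohen--Macaulay behaviour and dimension are controlled by Theorems~\ref{thm:mainB} and~\ref{thm:mainC}. Running the same cut-vertex sequence at $c_{r-1}$, noting that $c_{r-1}$ is a free (simplicial) vertex of the $K_{m_r}\circ_\ell H$ side so that that side contributes cleanly, and invoking the induction hypothesis on $B'$, makes each clique of $B$ add exactly one further copy of the per-vertex data. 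The totals become $b\cdot\depth(S_H/J_H)$ and $b\cdot\reg(S_H/J_H)$ in the non-complete case, together with $1+b\cdot\depth(S_H/J_H)$ and $b+1$ in the complete case; the answer sees $B$ only through $b=|V(B)|$, which is precisely why a Cohen--Macaulay closed $B$ may be swapped for $K_b$ and the statement matches Theorem~\ref{thm:mainD}.

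For part (3) I would track the top of the resolution through the same sequences and tensor factorisations. Projective dimension is additive in each step, yielding $\pd(S/J_G)=2b+bp_H$, and the extremal Betti number is located by following the $b$-fold contribution of $\beta_{p_H,p_H+r_H}(S_H/J_H)$ in the long exact sequence of $\Tor$ and checking that the complete blocks $K_{h+1},K_h$ contribute only in strictly lower homological or internal degree, so that the strand $\beta_{p,\,p+br_H+1}$ survives uncancelled in the last column; the bookkeeping will split into small cases exactly as the $n=2$ versus $n\ge 3$ distinction in Theorem~\ref{thm:mainD}. The hard part throughout is the non-complete case: there the cone $\{v\}\ast H$ is \emph{not} a gluing at a free vertex, so the clean additivity available for decomposable binomial edge ideals cannot be invoked, and the exact values must be pried out of the Mayer--Vietoris sequence directly---verifying that the connecting homomorphisms vanish where required and that no unexpected cancellation occurs in the extremal strand. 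Controlling these maps, ensuring the complete blocks never interfere with the extremal position, and separately disposing of the degenerate boundary value $b=1$ with $H$ complete (where $G=K_{h+1}$ forces $\reg=1$ rather than $b+1$) is where the genuine work lies.
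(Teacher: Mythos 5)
Your overall strategy is the same as the paper's: decompose the Cohen--Macaulay closed graph $B$ into a path of cliques via \cite[Theorem 3.1]{EHH11}, induct on the number of blocks with the base case $B=K_b$ handled by Theorem \ref{Thm: Depth for corona with complete graph}, run Ohtani's exact sequence \eqref{ohtani-ses} at a cut vertex, and finish with the Depth/Regularity Lemmas, the Auslander--Buchsbaum formula, and the long exact sequence of $\Tor$ (using $p_H\geq h-1$ to control the connecting maps) for the extremal Betti number. However, your induction as described does not close, and this is a genuine gap. When you apply the sequence at the cut vertex $c_{r-1}$ of $G=B\circ H$, the terms $G_{c_{r-1}}$ and $G_{c_{r-1}}\setminus c_{r-1}$ are \emph{hybrid} graphs of the form $(K_m\circ_{\ell}H)\cup_{v}(B^*\circ H)$: a large clique (created by cliquification, absorbing two blocks and one copy of $H$) carrying copies of $H$ at only some of its vertices, glued at a vertex to a smaller corona. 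Such graphs are neither coronas $B''\circ H$ of Cohen--Macaulay closed graphs nor graphs $K_n\circ_\ell H$, so ``invoking the induction hypothesis on $B'$'' does not apply to them; moreover, Theorems \ref{Thm: Combined -- l-corona with complete is unmixed, accessible, CM iff the other is unmixed, accessible, CM respectively} and \ref{Thm: Dimension of l-corona with complete graph}, which you cite at this point, give unmixedness, Cohen--Macaulayness and dimension, not the depth and regularity values that must be fed into the sequence. Your substitute argument --- that $c_{r-1}$ is simplicial on the $K_{m_r}\circ_\ell H$ side ``so that that side contributes cleanly'' --- is invalid: the clean splitting of $S/J$ under gluing at a vertex requires the vertex to be free in \emph{both} parts, and $c_{r-1}$ is not free in $B'\circ H$ since it carries its own cone over a copy of $H$; you in fact concede exactly this obstruction yourself in your final paragraph, which undercuts the step it was meant to justify.

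The paper closes this hole by strengthening the induction to a coupled pair of statements $(\mathscr{P}(\ell),\mathscr{Q}(\ell))$, where $\mathscr{Q}(\ell)$ asserts, for the hybrid $X=(K_n\circ_\ell H)\cup_{v}(B\circ H)$, the explicit values $\depth(S_X/J_X)=n-\ell+(b+\ell)\depth(S_H/J_H)$ and $\reg(S_X/J_X)=1+(b+\ell)\reg(S_H/J_H)$ together with the position of the extremal Betti number; it then proves $\mathscr{P}(k)$ from $\mathscr{F}(j)$ for $j<k$, and $\mathscr{Q}(k)$ from $\mathscr{P}(k)$ and the $\mathscr{F}(j)$, the point being that cliquifying a hybrid at its gluing vertex reproduces hybrids with fewer blocks, so the enlarged class is stable under the recursion. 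Your sketch contains essentially all the other ingredients, and your observation that the boundary case $b=1$ with $H$ complete (where $G=K_{h+1}$ forces $\reg(S/J_G)=1$) conflicts with the stated formula $\reg=b+1$ is a correct and worthwhile catch; but without formulating and proving the hybrid statement $\mathscr{Q}$, the inductive mechanism you propose does not go through as written.
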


\section{\bf Preliminaries}
\subsection{Combinatorial Preliminaries}
We recall some basic notation and definitions from graph theory which will be used throughout this article.

For a graph $G$, we denote the vertex set and edge set respectively by $V(G)$ and $E(G)$. A subgraph $H$ of $G$ is said to be an \textit{induced subgraph} if for $u, v \in V(H)$, $\{u,v\} \in E(H)$ if and only if $\{u,v\} \in E(G)$. For a graph $G$, a \emph{clique} is a complete subgraph of $G$. A vertex $v \in V(G)$ is said to be a \emph{simplicial vertex} if it belongs to exactly one maximal clique of $G$ and a vertex is called \emph{internal} if it is not simplicial. The set of internal vertices of $G$ is denoted by $\iv(G)$.
Given $v \in V(G)$, define the \textit{neighborhood of }$v$ in $G$ as $N_G(v) := \{u ~:~ \{u,v\} \in E(G)\}\subset V(G)$. 
For $v \in V(G)$, let $G_v$  denote the graph with vertices $V(G)$ and $E(G_v) =  E(G) \cup \{\{u, w\} ~:~ u, w \in N_G(v)\}$.

For any subset $A\subset V(G)$, by abuse of notation, $G \setminus A$ denotes the induced subgraph of $G$ on the vertex set $V(G)\setminus A$. In particular, for a vertex $v \in V(G)$, we denote by $G\setminus v$ the induced subgraph of $G$ on the vertex set $V(G)\setminus \{v\}$.


For any subset $T \subset V(G)$, we reserve the notation $\omega(G\setminus T)$ to denote the number of connected components of the induced subgraph $G \setminus T$. A subset $T\subset V(G)$ is called a \emph{cutset} in $G$ if either $T=\emptyset$ or $T\neq \emptyset$ with $\omega(G\setminus (T\setminus \{v\}))<\omega(G\setminus T)$ for every vertex $v\in T$. The existence of a non-empty cutset is always guaranteed in every graph, which is different from a complete graph $K_n$. Let $\mathscr{C}(G)$ denote the set of all cutsets in $G$.

We will say that $\mathscr{C}(G)$ is an \emph{accessible set system} if for every non-empty $T\in \mathscr{C}(G)$, there exists $t\in T$ such that $T\setminus \{t\} \in \mathscr{C}(G)$. A graph $G$ is called \emph{accessible} if its binomial edge ideal $J_G$ is unmixed and $\mathscr{C}(G)$ is an \emph{accessible set system}. These definitions are being used in \cite{BMS22}.

\subsection{Algebraic Preliminaries}
We now recall the definition of two important homological invariants, which can be computed directly from the Betti table. One of them is the Castelnuovo-Mumford regularity, which measures the complexity of the structure of a graded module over a polynomial ring $A$. Let $M$ be a finitely generated graded $A$-module and $\beta_{i,i+j}(M)$ the graded Betti numbers. The \textit{projective dimension} of $M$ is denoted by $\pd_A(M)$ (or, simply $\pd(M)$) and is defined as $$\pd_A(M):=\max\{i : \beta_{i,i+j}(M) \neq 0 \text{ for some } j\};$$
 and the \textit{Castelnuovo-Mumford regularity} (or simply, \textit{regularity}) of $M$ is defined as 
 $$\reg(M):=\max \{j : \beta_{i,i+j}(M) \neq 0 \text{ for some } i\}.$$ 

The following formula due to Auslander and Buchsbaum is useful for our later discussion.

\begin{theorem}[Auslander--Buchsbaum Formula]
    Let $R$ be a commutative Noetherian local (or graded) ring and $M$ is a non-zero finitely generated $R$-module such that $\pd_R(M)<\infty$, then $$\pd_R(M)+\depth(M)=\depth(R).$$
\end{theorem}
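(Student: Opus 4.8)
The plan is to prove the formula by induction on the projective dimension $p := \pd_R(M)$, working over the local (or graded) ring $R$ with maximal ideal $\mathfrak{m}$ and residue field $k := R/\mathfrak{m}$. Two standard tools drive the argument. First, the \emph{depth lemma}: for any short exact sequence $0 \to A \to B \to C \to 0$ of nonzero finitely generated $R$-modules one has $\depth(C) \geq \min\{\depth(A)-1,\ \depth(B)\}$ and $\depth(A) \geq \min\{\depth(B),\ \depth(C)+1\}$. Second, the Rees characterization $\depth(N) = \min\{i : \Ext^i_R(k, N) \neq 0\}$. The base case $p=0$ is immediate: then $M$ is free, so $\depth(M) = \depth(R)$ and the formula reads $0 + \depth(R) = \depth(R)$.

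For the inductive step, assume $p \geq 1$ and choose a minimal free cover to obtain a short exact sequence $0 \to K \to F \to M \to 0$ with $F$ free and the syzygy $K \neq 0$ satisfying $\pd_R(K) = p-1$. Writing $d := \depth(R) = \depth(F)$, the inductive hypothesis gives $\depth(K) = d - p + 1$. The depth lemma immediately yields the lower bound $\depth(M) \geq \min\{\depth(K)-1,\ d\} = d - p$. For the matching upper bound when $p \geq 2$, the inequality $\depth(K) \geq \min\{\depth(F),\ \depth(M)+1\}$ reads $d-p+1 \geq \min\{d,\ \depth(M)+1\}$; since $p \geq 2$ forces the minimum to be $\depth(M)+1$, we conclude $\depth(M) \leq d - p$, and hence equality.

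The one case the depth lemma cannot settle --- and the genuine crux of the argument --- is the borderline $p = 1$, where it only reproduces $\depth(M) \geq d - 1$. Here $K$ is itself free (a projective over a local ring), and the inclusion $\iota \colon K \hookrightarrow F$ is represented, by minimality of the resolution, by a matrix with entries in $\mathfrak{m}$. I would apply $\Ext^\bullet_R(k,-)$ to $0 \to K \xrightarrow{\iota} F \to M \to 0$: since $F$ and $K$ are free of depth $d$, one has $\Ext^i_R(k, F) = \Ext^i_R(k, K) = 0$ for $i < d$, so the long exact sequence yields $\Ext^{d-1}_R(k, M) \cong \ker\bigl(\Ext^d_R(k, \iota)\bigr)$. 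But each $\Ext^d_R(k, -)$ of a free module is a $k$-vector space on which $\mathfrak{m}$ acts as zero, so the map induced by the $\mathfrak{m}$-valued matrix $\iota$ vanishes; hence $\Ext^{d-1}_R(k, M) \cong \Ext^d_R(k, K) \neq 0$ and $\depth(M) \leq d-1$, completing the case.

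An equally valid route I would keep in reserve is to reduce to $\depth(R) = 0$: when $\depth(R) > 0$ and $\depth(M) > 0$ one can, by prime avoidance, pick $x \in \mathfrak{m}$ that is a nonzerodivisor on both $R$ and $M$, pass to $R/xR$ and $M/xM$ (which drops both depths by one while preserving finite projective dimension) and induct, while the base case $\depth(R)=0$ uses that $\mathfrak{m}$ is an associated prime of $R$ together with minimality to show that $\pd_R(M) < \infty$ forces $M$ to be free. Either way, the essential point --- and the step I expect to require the most care --- is converting the finiteness of $\pd_R(M)$ together with minimality of the resolution into the exact vanishing that pins down $\depth(M)$; the depth inequalities alone are not sharp at the boundary.
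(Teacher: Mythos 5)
Your proof is correct, but there is nothing in the paper to compare it against: the paper records the Auslander--Buchsbaum formula as a classical citation (stated for later use, with no proof given), so you have supplied an argument where the authors supply none. What you wrote is a complete and standard textbook proof: induction on $p=\pd_R(M)$, with the two depth inequalities (which are exactly parts (a) and (e) of the paper's ``Depth Lemma'') settling both bounds when $p\geq 2$, and the genuine crux --- the case $p=1$ --- handled correctly by the minimality trick: the inclusion $K\hookrightarrow F$ has matrix entries in $\mathfrak{m}$, the functor $\Ext^d_R(\K,-)$ is $R$-linear on maps and its values on free modules are killed by $\mathfrak{m}$, so the induced map $\Ext^d_R(\K,K)\to\Ext^d_R(\K,F)$ vanishes and the long exact sequence forces $\Ext^{d-1}_R(\K,M)\cong\Ext^d_R(\K,K)\neq 0$. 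One small point worth making explicit: when $d=\depth(R)=0$ your displayed isomorphism involves $\Ext^{-1}_R(\K,M)=0$, which is not a defect but a contradiction showing that no module of projective dimension $1$ exists over a depth-zero ring (the map $\Hom(\K,K)\to\Hom(\K,F)$ is simultaneously injective and zero, forcing $\Hom(\K,K)=0$ against $\depth(K)=0$); this is exactly what the formula predicts, since $\depth(M)=-1$ is impossible, and the analogous vacuity remark disposes of $d-p<0$ throughout the induction. Your reserve route (cutting down by an element regular on both $R$ and $M$, with the depth-zero base case) is the other standard proof, e.g.\ the one in Bruns--Herzog; either is fully adequate here, and since the paper only ever invokes the formula for graded quotients $S/J_G$ of polynomial rings, your graded reading of the argument is the relevant one.
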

For the depth computation of binomial edge ideals, we need the following important result due to Ohtani.

\begin{lemma}[{Ohtani, cf. \cite[Lemma 4.8]{Oht11}}]\label{Lem: Non-simplicial vertex SES}
    Let $G$ be a graph and $v \in V(G)$ be a non-simplicial vertex. Then 
    $$J_G = (J_{G\setminus v} + (x_v, y_v)) \cap J_{G_v}.$$
\end{lemma}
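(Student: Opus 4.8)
The plan is to prove the two inclusions separately; the inclusion ``$\subseteq$'' is a routine check on generators, while ``$\supseteq$'' is the substantive part and will be handled by passing to the reduced ring $S/J_G$.

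First I would record that $J_{G\setminus v}\subseteq J_G\subseteq J_{G_v}$, since $E(G\setminus v)\subseteq E(G)\subseteq E(G_v)$. For ``$\subseteq$'', take a generator $m_{ij}$ of $J_G$. If $v\notin\{i,j\}$ then $\{i,j\}\in E(G\setminus v)$ and $m_{ij}\in J_{G\setminus v}$; if $v\in\{i,j\}$ then $m_{ij}\in(x_v,y_v)$. Either way $m_{ij}\in J_{G\setminus v}+(x_v,y_v)$, and since also $m_{ij}\in J_{G_v}$, we obtain $J_G\subseteq(J_{G\setminus v}+(x_v,y_v))\cap J_{G_v}$.

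For the reverse inclusion, the key structural observation is that $J_{G_v}=J_G+(m_{uw}:u,w\in N_G(v))$, so modulo $J_G$ the ideal $J_{G_v}$ is generated by the ``extra'' minors $m_{uw}$ coming from the clique imposed on $N_G(v)$. For any two distinct $u,w\in N_G(v)$ both $\{u,v\}$ and $\{w,v\}$ are edges of $G$, hence $m_{uv},m_{wv}\in J_G$, and the straightening identities
\begin{align*}
x_v\,m_{uw}&=x_w\,m_{uv}-x_u\,m_{wv},\\
y_v\,m_{uw}&=y_w\,m_{uv}-y_u\,m_{wv}
\end{align*}
show that $x_v m_{uw},\,y_v m_{uw}\in J_G$. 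Consequently $(x_v,y_v)\cdot(J_{G_v}/J_G)=0$ in $S/J_G$; that is, every element of $J_{G_v}/J_G$ is annihilated by both $x_v$ and $y_v$. Now I would finish using the fact that $J_G$ is a radical ideal (see \cite{HHHKR10}), so $S/J_G$ is reduced. Let $f\in(J_{G\setminus v}+(x_v,y_v))\cap J_{G_v}$ and write $\bar f$ for its image in $S/J_G$. Since $f\in J_{G\setminus v}+(x_v,y_v)$ and $J_{G\setminus v}\subseteq J_G$, we may write $\bar f=x_v\bar a+y_v\bar b$ for some $\bar a,\bar b\in S/J_G$. On the other hand $f\in J_{G_v}$, so $\bar f\in J_{G_v}/J_G$ and therefore $x_v\bar f=y_v\bar f=0$ by the previous observation. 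Multiplying, $\bar f^2=\bar a\,(x_v\bar f)+\bar b\,(y_v\bar f)=0$, and reducedness forces $\bar f=0$, i.e.\ $f\in J_G$. This yields ``$\supseteq$'' and completes the argument.

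The main obstacle is the reverse inclusion, and its crux is the observation that the extra minors $m_{uw}$ are annihilated by $(x_v,y_v)$ modulo $J_G$, obtained from the straightening relations above; the only further nontrivial input is the radicality of $J_G$, which is what turns $\bar f^2=0$ into $\bar f=0$. I would also note that non-simpliciality of $v$ is what makes the decomposition useful rather than a feature of the algebra: when $v$ is simplicial one has $G_v=G$ and the statement degenerates to the triviality $J_G=(J_{G\setminus v}+(x_v,y_v))\cap J_G$, so the same proof applies but carries no content.
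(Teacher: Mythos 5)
The paper does not prove this lemma at all---it is quoted directly from Ohtani \cite[Lemma 4.8]{Oht11}---so there is no internal proof to compare against, and your argument should be judged on its own. It is correct and complete: the straightening identities $x_v m_{uw}=x_w m_{uv}-x_u m_{wv}$ and $y_v m_{uw}=y_w m_{uv}-y_u m_{wv}$ check out (and use precisely that $u,w\in N_G(v)$, so $m_{uv},m_{wv}\in J_G$), yielding $(x_v,y_v)\cdot(J_{G_v}/J_G)=0$; the radicality of $J_G$ from \cite{HHHKR10} then legitimately upgrades $\bar f^{\,2}=0$ to $\bar f=0$, and this is essentially the standard (Ohtani-style) proof of the lemma. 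Your closing observation---that non-simpliciality of $v$ is only what makes the decomposition nontrivial, since $G_v=G$ when $v$ is simplicial and the identity degenerates---is also accurate.
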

Thus, we have the following short exact sequence:
      \begin{align}\label{ohtani-ses}
    0\longrightarrow \frac{S}{J_{G}}\longrightarrow 
    \frac{S}{(x_v,y_v)+J_{G \setminus v}} \oplus \frac{S}{J_{{G_v}}}\longrightarrow \frac{S}{(x_v,y_v)+J_{G_v \setminus v}} \longrightarrow 0,
     \end{align}
  and correspondingly, the  long exact sequence of Tor modules:
       \begin{multline}\label{ohtani-tor}
            \cdots \longrightarrow \Tor_{i}^{S}\left( \frac{S}{J_G},\K\right)_{i+j}\longrightarrow \Tor_{i}^{S}\left( \frac{S}{(x_v,y_v)+J_{G\setminus v}},\K\right)_{i+j} \oplus \Tor_{i}^{S}\left(\frac{S}{J_{G_v}},\K\right)_{i+j}\\ 
            \longrightarrow \Tor_{i}^{S}\left(\frac{S}{(x_v,y_v)+J_{G_v\setminus v}},\K\right)_{i+j} \longrightarrow \Tor_{i-1}^{S}\left( \frac{S}{J_G},\K\right)_{i+j}\longrightarrow \cdots
     \end{multline}
     


We now recall the following basic properties of depth, which will be useful in the subsequent proofs. In the later proofs, we refer to this lemma as ``Depth Lemma''.
\begin{lemma}[{\bf Depth Lemma}]
   Let $S$ be a standard graded polynomial ring over a field $k$ and let $M$, $N$ and $P$ be finitely generated graded $S$-modules. If $$0\to M \xrightarrow{f} N \xrightarrow{g} P \to 0$$ is a short exact sequence with $f$, $g$ graded homomorphisms of degree zero, then
   \begin{enumerate}
       \item[(a)] $\depth_S(M) \geq \min\{\depth_S(N), \depth_S(P) + 1\}$;
       \item[(b)] $\depth_S(M) = \depth_S(P) + 1$, if $\depth_S(N) > \depth_S(P)$;
       \item[(c)] $\depth_S(M) = \depth_S(N)$, if $\depth_S(N) < \depth_S(P)$.
       \item[(d)] $\depth_S(N) \geq \min\{\depth_S(M), \depth_S(P)\}$;
       \item[(e)] $\depth_S(P) \geq \min\{\depth_S(N), \depth_S(M) - 1\}$;
       \item[(f)] $\depth_S(P) = \depth_S(N)$, if $\depth_S(N) < \depth_S(M)$.
   \end{enumerate}
\end{lemma}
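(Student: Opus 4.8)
The plan is to recast each of the six statements in the language of local cohomology and then read them all off from a single long exact sequence. Write $\mathfrak{m}$ for the graded maximal ideal of $S$ and invoke Grothendieck's characterization of depth,
$$\depth_S(M) = \min\{\, i : H^i_{\mathfrak{m}}(M) \neq 0 \,\},$$
with the convention that $\min \emptyset = +\infty$ (so the zero module has infinite depth). An equally workable substitute is $\depth_S(M) = \min\{i : \Ext^i_S(\K,M)\neq 0\}$; the argument below is formally identical in either setting. Applying the cohomology functor $H^\bullet_{\mathfrak{m}}(-)$ to the given short exact sequence $0 \to M \to N \to P \to 0$ yields the long exact sequence
$$\cdots \to H^{i-1}_{\mathfrak{m}}(P) \to H^i_{\mathfrak{m}}(M) \to H^i_{\mathfrak{m}}(N) \to H^i_{\mathfrak{m}}(P) \to H^{i+1}_{\mathfrak{m}}(M) \to \cdots,$$
and every assertion follows from the exactness of a suitable three-term stretch of it.

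For the three inequalities (a), (d) and (e) the recipe is uniform: I would set $t$ equal to the claimed lower bound and, for each index $i<t$, observe that the two modules flanking the term of interest both vanish, which forces that term to vanish as well; the resulting vanishing of local cohomology below $t$ is precisely the inequality $\depth \geq t$. Concretely, for (a) the condition $i < \min\{\depth(N),\depth(P)+1\}$ makes both $H^i_{\mathfrak{m}}(N)$ and $H^{i-1}_{\mathfrak{m}}(P)$ zero, so the segment $H^{i-1}_{\mathfrak{m}}(P)\to H^i_{\mathfrak{m}}(M)\to H^i_{\mathfrak{m}}(N)$ forces $H^i_{\mathfrak{m}}(M)=0$; part (d) uses instead the segment $H^i_{\mathfrak{m}}(M)\to H^i_{\mathfrak{m}}(N)\to H^i_{\mathfrak{m}}(P)$, and part (e) the segment $H^i_{\mathfrak{m}}(N)\to H^i_{\mathfrak{m}}(P)\to H^{i+1}_{\mathfrak{m}}(M)$.

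The three equalities (b), (c) and (f) then upgrade to equalities the bounds already established — parts (b) and (c) sharpening (a), and part (f) sharpening (e) — under the stated strict comparison of depths; in each case I would prove the reverse inequality by locating the first nonvanishing local cohomology module explicitly. For (b), writing $p=\depth(P)$ and using $\depth(N)>p$, the hypothesis gives $H^p_{\mathfrak{m}}(N)=0$, so the connecting map $H^p_{\mathfrak{m}}(P)\hookrightarrow H^{p+1}_{\mathfrak{m}}(M)$ is injective and sends the nonzero $H^p_{\mathfrak{m}}(P)$ into $H^{p+1}_{\mathfrak{m}}(M)$, whence $\depth(M)\leq p+1$. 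Part (c) is symmetric: with $n=\depth(N)<\depth(P)$ the vanishing of both $H^{n-1}_{\mathfrak{m}}(P)$ and $H^n_{\mathfrak{m}}(P)$ yields $H^n_{\mathfrak{m}}(M)\cong H^n_{\mathfrak{m}}(N)\neq 0$, so $\depth(M)=n$. Part (f) likewise: with $n=\depth(N)<\depth(M)$ the vanishing of $H^n_{\mathfrak{m}}(M)$ makes $H^n_{\mathfrak{m}}(N)\hookrightarrow H^n_{\mathfrak{m}}(P)$ injective, forcing $H^n_{\mathfrak{m}}(P)\neq 0$ and hence $\depth(P)=n$.

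I do not expect any genuine obstacle: this is the classical ``Depth Lemma'', and once the long exact sequence is in hand each part is a two-line diagram chase. The only things demanding care are bookkeeping — keeping the index shifts in the connecting maps straight, since the placement of $i-1$ versus $i+1$ is exactly what distinguishes (a), (d) and (e) — and remembering the $\min\emptyset=+\infty$ convention so that the equalities remain valid in degenerate cases (for instance when one module has infinite depth).
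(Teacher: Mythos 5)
Your proof is correct and follows essentially the same route the paper indicates: the paper offers no detailed argument, remarking only that the lemma ``can be easily derived from the long exact sequence of $\Tor$ and $\Ext$,'' and your local cohomology argument (with the $\Ext^i_S(\K,-)$ characterization of depth noted as an equivalent substitute) is precisely that standard derivation, carried out with the index bookkeeping and degenerate-case conventions handled correctly.
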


Likewise, we recall the following basic properties of regularity, which we will use in our later proofs. From now onward, we refer to this lemma as ``Regularity Lemma''.
\begin{lemma}[{\bf Regularity Lemma}]
   Let $S$ be a standard graded polynomial ring over a field $k$ and let $M$, $N$ and $P$ be finitely generated graded $S$-modules. If $$0\to M \xrightarrow{f} N \xrightarrow{g} P \to 0$$ is a short exact sequence with $f$, $g$ graded homomorphisms of degree zero, then
   \begin{enumerate}
       \item[(a)] $\reg(M) \leq \max\{\reg(N), \reg(P) + 1\}$;
       \item[(b)] $\reg(M) = \reg(P) + 1$, if $\reg(N) < \reg(P)$;
       \item[(c)] $\reg(M) = \reg(N)$, if $\reg(N) > \reg(P)$.
   \end{enumerate}
\end{lemma}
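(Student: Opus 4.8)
The plan is to translate the statement into the language of graded $\Tor$ and exploit the long exact sequence attached to the short exact sequence $0 \to M \to N \to P \to 0$. Recall that regularity is read off from graded Betti numbers, so that
$$\reg(M) = \max\{j : \Tor_i^S(M,k)_{i+j} \neq 0 \text{ for some } i\},$$
and likewise for $N$ and $P$. The short exact sequence induces, for each fixed internal degree $d$, a long exact sequence
$$\cdots \to \Tor_{i+1}^S(P,k)_d \to \Tor_i^S(M,k)_d \to \Tor_i^S(N,k)_d \to \Tor_i^S(P,k)_d \to \Tor_{i-1}^S(M,k)_d \to \cdots$$
in which every map, including the connecting homomorphisms, preserves $d$. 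The decisive bookkeeping is that, writing $d = i+j$, a term such as $\Tor_{i+1}^S(P,k)_d$ reads as $\Tor_{i+1}^S(P,k)_{(i+1)+(j-1)}$: lowering the homological index by one raises the relevant ``$j$''-index by one. This single observation is the origin of every ``$+1$'' appearing in the statement.

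First I would establish three master inequalities by chasing nonvanishing terms. For (a), suppose $\Tor_i^S(M,k)_{i+j} \neq 0$ with $j = \reg(M)$; exactness at that spot forces either $\Tor_i^S(N,k)_{i+j} \neq 0$, whence $\reg(N) \geq j$, or $\Tor_{i+1}^S(P,k)_{(i+1)+(j-1)} \neq 0$, whence $\reg(P) \geq j-1$. This gives $\reg(M) \leq \max\{\reg(N), \reg(P)+1\}$, which is exactly (a). Running the identical argument at the $N$-spot and the $P$-spot of the long exact sequence yields the two companion bounds $\reg(N) \leq \max\{\reg(M), \reg(P)\}$ and $\reg(P) \leq \max\{\reg(N), \reg(M)-1\}$.

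Then I would deduce (b) and (c) purely by combining these three inequalities. For (b), the hypothesis $\reg(N) < \reg(P)$ rules out the option $\reg(P) \leq \reg(N)$ in the bound $\reg(P) \leq \max\{\reg(N), \reg(M)-1\}$, forcing $\reg(P) \leq \reg(M)-1$, that is $\reg(M) \geq \reg(P)+1$; together with (a), which in this case reads $\reg(M) \leq \reg(P)+1$, equality follows. For (c), the hypothesis $\reg(N) > \reg(P)$ forces $\reg(N) \leq \reg(M)$ in the bound $\reg(N) \leq \max\{\reg(M), \reg(P)\}$, while (a) simplifies to $\reg(M) \leq \reg(N)$ because $\max\{\reg(N),\reg(P)+1\} = \reg(N)$ here; the two give $\reg(M) = \reg(N)$.

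The only genuine obstacle is making the internal-degree bookkeeping in the connecting homomorphism airtight; once the index shift is pinned down, the three inequalities and their recombinations are routine diagram chases. I would additionally flag the degenerate cases at the outset, where one of the modules vanishes so that its regularity is taken to be $-\infty$, to ensure the max-expressions and the strict inequalities in (b) and (c) are interpreted consistently.
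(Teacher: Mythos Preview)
Your proposal is correct and follows precisely the route the paper indicates: the paper itself merely states that the lemma ``can be easily derived from the long exact sequence of $\Tor$ and $\Ext$ corresponding to the short exact sequence mentioned,'' without supplying details. Your argument via the long exact sequence of graded $\Tor$, the index shift in the connecting homomorphism, and the three master inequalities is exactly the intended derivation, only spelled out in full.
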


The above two lemmas can be easily derived from the long exact sequence of $\Tor$ and $\Ext$ corresponding to the short exact sequence mentioned.

\section{\bf Unmixedness, Cohen-Macaulayness, and Accessibility of Certain Corona Products}
Let $G$ and $H$ be two graphs with $|V(G)|=m$ and $|V(H)|=n$. Let $V(G)=\{v_1,\ldots,v_m\}$. Suppose, $H_{v_i}$ (or simply $H_i$) denotes the copy of $H$ which is joined to the vertex $v_i$ of $G$ in $G\circ H$ for all $i=1,\ldots, m$. Thus $$G\circ H:=G\cup (\bigcup_{i=1}^{m}v_i\ast H_i).$$

Modifying the above, we define for a non-empty subset $L \subset V(G)$
$$G\circ_{L}H:=G\cup (\bigcup_{v\in L}v\ast H_v).$$ We call $G\circ_L H$ the (generalized) \emph{$L$-corona product} of $G$ with $H$. The definition of $G\circ_{L}H$ heavily depends on the choice of $L$. Clearly, $G\circ_{L}H$ coincides with the (usual) corona product $G\circ H$ if and only if $L=V(G)$.

Whenever $G=K_m$, all subsets $L$ of $V(G)$ with $|L|=\ell$ behave the same and thus we use the notation $K_m\circ_\ell H$ for $K_m\circ_L H$ to emphasize the fact that this is independent of the choice of $L$ with $|L|=\ell$.

Every cutset $T$ of $G\circ H$ can be expressed as $T = T_0\cup(\bigcup_{i=1}^{m}T_i)$, where $T_0 \subset V(G)$ and $T_i \subset V(H_i)$ for all $1\leq i \leq m$. The detailed description of a cutset for $G\circ H$ or, more generally, for $G\circ_{L} H$ with $L\subset V(G)$ can be understood from the result below.

\begin{prop}\label{Prop: cutset of corona}
Let $G$ and $H$ be two graphs and $L$ a non-empty proper subset of $V(G)$. Let $\emptyset \neq T \in \mathscr{C}(G\circ_{L} H)$ with $T=T_0\cup(\bigcup_{v \in L}T_v)$ for $T_0 \subset V(G)$ and $T_v \subset V(H_v)$ for all $v\in L$. Then, the following assertions hold.
\begin{enumerate}
    \item $\emptyset \neq T_0 \subsetneq V(G)$.
    \item If $v\in L\setminus T_0$, then $T_v=\emptyset$.
    \item If $v\in T_0$, then either $T_v=\emptyset$ or $T_v\in \mathscr{C}(H_v)$. 
    \item If $v\in L\cap T_0$ such that $N_G(v)\subset T_0$, then $T_v\neq \emptyset$.
    \item Let $N:=\{v\in L : T_v\neq \emptyset\}$. Then $$\omega(G\circ_L H \setminus T)=\omega(G\setminus T_0)+\sum\limits_{v\in N}\omega(H_v\setminus T_v)+|T_0\cap L|-|N|.$$
    \item If $(T_0)_{G-sim}$ denotes the set of simplicial vertices of $G$ in $T_0$, then $(T_0)_{G-sim} \subset L$. 
    
    \item If $T_0\cap L=\emptyset$, then $T=T_0\in \mathscr{C}(G)$ and $(T_0)_{G-sim}=\emptyset$.
\end{enumerate}
\end{prop}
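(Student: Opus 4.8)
The plan is to reduce every assertion to a single structural observation: in $G\circ_L H$ each copy $H_v$ is glued to the rest of the graph only through the cone vertex $v$, and inside the cone $v\ast H_v$ every vertex of $H_v$ is adjacent to $v$. Consequently, restoring one vertex $t$ to the complement $W:=(G\circ_L H)\setminus T$ — that is, passing from $W$ to $W\cup\{t\}=(G\circ_L H)\setminus(T\setminus\{t\})$ — has an effect on the number of components that is \emph{local}: it is confined to the $G$-part of $W$ when $t\in T_0$, and to the single block $H_v\setminus T_v$ when $t\in T_v$ (since then the only outside neighbour $v$ of $t$ is absent from $W\cup\{t\}$). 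Throughout I will use the cutset hypothesis in its contrapositive form: if restoring some $t\in T$ leaves $\omega$ unchanged or increases it, then $T$ is not a cutset.

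The \textbf{essentiality} parts (1)--(4) all follow this template. For (1), if $T_0=\emptyset$ then $G$ stays intact and every remnant $H_v\setminus T_v$ hangs off the present vertex $v$, so $\omega(W)=1$ and no vertex can be essential; and if $T_0=V(G)$ I pick $v_0\in V(G)\setminus L$ (which exists because $L$ is proper), note it carries no copy of $H$ and has all its neighbours in $T_0$, so restoring $v_0$ merely isolates it and increases $\omega$. For (2), when $v\in L\setminus T_0$ any $t\in T_v$ is adjacent to the present vertex $v$, so restoring $t$ only reattaches it to $v$'s component and $\omega$ is unchanged. For (3), when $v\in T_0$ the block $H_v\setminus T_v$ is detached from everything else, so for $t\in T_v$ the whole-graph difference $\omega(W\cup\{t\})-\omega(W)$ equals $\omega(H_v\setminus(T_v\setminus\{t\}))-\omega(H_v\setminus T_v)$; the cutset inequality then descends verbatim to $H_v$, so if $T_v\neq\emptyset$ it is a cutset of $H_v$. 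For (4), if $v\in L\cap T_0$ with $N_G(v)\subset T_0$ and $T_v=\emptyset$, then the intact, \emph{connected} block $H_v$ is a single component of $W$, and restoring $v$ joins it to $v$ without reaching $G\setminus T_0$, so $\omega$ does not drop — contradicting the essentiality of $v$. This is the one place where connectedness of $H$ is essential, and it is the step I expect to require the most care to state cleanly.

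Part (5) is then pure bookkeeping built on the local picture. The components of $W$ are those of $G\setminus T_0$, contributing $\omega(G\setminus T_0)$, together with the detached blocks: by (2) a vertex $v\in L\setminus T_0$ contributes nothing (its $H_v$ merges into $v$'s $G$-component), while each $v\in T_0\cap L$ contributes $\omega(H_v\setminus T_v)$. Splitting $T_0\cap L$ into $N=\{v:T_v\neq\emptyset\}$ (which lies in $T_0\cap L$ by (2)) and its complement, where the latter contributes $\omega(H_v)=1$ per vertex by connectedness of $H$, yields exactly $\omega(G\setminus T_0)+\sum_{v\in N}\omega(H_v\setminus T_v)+|T_0\cap L|-|N|$.

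Finally, (6) follows the essentiality template once more: a simplicial $w\in T_0$ has $N_G(w)$ a clique, so the surviving neighbours $N_G(w)\setminus T_0$ lie in one component of $G\setminus T_0$; if in addition $w\notin L$ then restoring $w$ either attaches it to that single component or isolates it, so $\omega$ never strictly drops, forcing $w\in L$. For (7), $T_0\cap L=\emptyset$ together with (2) forces every $T_v=\emptyset$, hence $T=T_0$; since all surviving copies $H_v$ then hang off present base vertices, the counting formula collapses to $\omega((G\circ_L H)\setminus A)=\omega(G\setminus A)$ for $A=T_0$ and for each $A=T_0\setminus\{t\}$, so the cutset inequality transfers to $G$ and $T_0\in\mathscr{C}(G)$; and $(T_0)_{G\text{-sim}}\subset L$ from (6) combined with $T_0\cap L=\emptyset$ gives $(T_0)_{G\text{-sim}}=\emptyset$. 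Beyond the careful local-effect justification already flagged in (4), the main obstacle is keeping the bookkeeping in (5) consistent across the three regimes ($v\notin T_0$, $v\in T_0\cap L$ with $T_v=\emptyset$, and $v\in N$) simultaneously.
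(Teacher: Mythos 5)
Your proposal is correct and takes essentially the same route as the paper's proof: the same vertex-restoration (essentiality) analysis for (1)--(4), the same component bookkeeping for (5) (including the use of connectedness of $H$ to make each intact detached copy count once), and the same reduction of (7) via (2) and the collapsed count $\omega((G\circ_L H)\setminus A)=\omega(G\setminus A)$. The only divergence is in (6) and the tail of (7), where the paper observes that $w\notin L$ remains simplicial in $G\circ_L H$ and then invokes \cite[Proposition 2.1]{RR14} (simplicial vertices lie in no cutset), whereas you inline the underlying clique-neighbourhood argument directly --- a harmless, self-contained variant of the same idea.
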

\begin{proof} ---
We prove the assertions as follows.

{\it Proof of (1).} --- If $T_0$ is empty, then clearly $G\circ_L H \setminus T$ remains connected. Thus $T$ is not a cutset since $T$ is assumed to be non-empty. Now suppose $T_0=V(G)$. Since $L\subsetneq V(G)$, assume $w\in V(G)\setminus L$. Then $G\circ_L H \setminus (T\setminus \{w\})=\{w\}\cup(\bigcup_{v \in L}(H_v\setminus T_v))$ and thus $$\omega(G\circ_L H \setminus (T\setminus \{w\}))=1+\sum\limits_{v\in L}\omega(H_v\setminus T_v) > \sum\limits_{v\in L}\omega(H_v\setminus T_v)=\omega(G\circ_L H \setminus T),$$ a contradiction for $T$ being a cutset. 

{\it Proof of (2).} --- We can assume that $v\in L\setminus T_0$. Suppose, if possible, $T_v\neq \emptyset$ and assume that $w\in T_v$. Set $T^*:=T\setminus T_v$. Since $H_v\setminus T_v$ remains connected to the vertex $v$ in $G\circ_{L}H\setminus (T\setminus \{w\})$ and $v$ is not removed in $G\circ_{L}H\setminus (T\setminus \{w\})$ as $v\notin T_0$, evidently $\omega(G\circ_L H \setminus T^*)=\omega(G\circ_L H \setminus T)$. For the same reason, it follows that $$\omega(G\circ_L H \setminus (T\setminus \{w\})) = \omega(G\circ_L H \setminus T),$$
which contradicts the fact that $T \in \mathscr{C}(G\circ_{L}H)$.

{\it Proof of (3).} --- Suppose that $T_v\neq \emptyset$. Set $T^*:=T\setminus T_v$ as before. Observe that, since $v \in T_0$ $$G\circ_L H \setminus T=(H_v\setminus T_v)\sqcup \left(G\circ_{L\setminus \{v\}} H \setminus T^*\right).$$
Let $w\in T_v$. Since $T$ is a cutset of $G\circ_{L}H$, it follows that
\begin{align*}
    &\omega(G\circ_L H \setminus (T\setminus \{w\})<\omega(G\circ_L H \setminus T);\\
    \text{or,}\quad &\omega(H_v\setminus (T_v\setminus \{w\}))+ \omega(G\circ_{L\setminus \{v\}} H \setminus T^*) < \omega(H_v\setminus T_v)+ \omega(G\circ_{L\setminus \{v\}} H \setminus T^*);\\
    \text{or,}\quad & \omega(H_v\setminus (T_v\setminus \{w\})) < \omega(H_v\setminus T_v),
\end{align*}
as desired. This concludes that $T_v$ is a non-empty cutset of $H_v$.

{\it Proof of (4).} --- Assume that $v\in L \cap T_0$ with $N_G(v)\subset T_0$. Suppose, if possible, $T_v=\emptyset$. Therefore $$\omega(G\circ_LH\setminus T)=\omega(G\circ_LH\setminus (T\setminus \{v\})$$ with the description of both of these connected components remains the same except that the connected component $H_v$ in the former gets replaced by the connected component ${\rm cone}(v,H_v)$ in latter. However, this leads to a contradiction, since $T \in \mathscr{C}(G\circ_LH)$. Hence $T_v\neq \emptyset$.

{\it Proof of (5).} --- This proof follows similarly as in (3).

{\it Proof of (6).} --- Let $v\in (T_0)_{G-sim}$. Let $F$ be the unique clique in $G$ containing $v$. Suppose if possible, $v\notin L$. Thus $F$ remains the unique clique in $G\circ_LH$ containing $v$, which implies that $v$ is a simplicial vertex of $G\circ_LH$. Therefore using \cite[Proposition 2.1]{RR14}, it follows that $v$ is not contained in any cutset of $G\circ_LH$, contradicting the fact that $v\in T_0\subset T \in \mathscr{C}(G\circ_LH)$. Hence $v\in L$, as desired.


{\it Proof of (7).} --- Since $T_0\cap L=\emptyset$, it follows from (2) that $T_v=\emptyset$ for all $v \in L$, i.e., $N=\emptyset$. In other words, $T=T_0$. Let $v \in T_0$. Since $T_0=T\in \mathscr{C}(G\circ_L H)$, it follows that
\begin{align*}
    &\omega(G\circ_L H\setminus(T_0\setminus \{v\}))<\omega(G\circ_L H\setminus T_0);\\
    \text{or,}\quad &\omega(G\setminus (T_0\setminus \{v\})< \omega(G\setminus T_0). \quad (\text{using (5), as }T_0\cap L=N=\emptyset) 
\end{align*}
This proves $T_0\in \mathscr{C}(G)$.

In this case, if $(T_0)_{G-sim}$ happens to be non-empty and containing $w$, say, then $w$ should not lie in any cutset of $G$ following \cite[Proposition 2.1]{RR14}. In particular, $w\notin T_0$, a contradiction.
\end{proof}

\begin{remark}
Observe that the formula proved in (5) works for any subset $T \subset V(G\circ_L H)$, not necessarily a cutset of $G\circ_L H$.
\end{remark}

\begin{theorem}\label{Thm: Unmixedness of L-corona of G and H implies H is unmixed}
    Let $G$ and $H$ be two graphs such that $|V(G)|>1$. For a non-empty proper subset $L$ of $V(G)$, if $J_{G\circ_L H}$ is unmixed, then the following are true:
    \begin{enumerate}
        \item $L$ does not contain any cut vertex of $G$. More generally, for every subset $L_0\subset L$, $L_0\in \mathscr{C}(G\circ_LH)$, though the induced subgraph $G\setminus L_0$ of $G$ remains connected. In particular, $G\setminus L$ is connected.
        \item $J_H$ is unmixed.
    \end{enumerate}
    
\end{theorem}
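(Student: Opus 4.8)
The plan is to reduce everything to the standard combinatorial criterion for unmixedness and then transport the resulting numerical identities from $G\circ_L H$ down to $G$ and to $H$ via the component count of Proposition~\ref{Prop: cutset of corona}(5). Recall that for a connected graph $\Gamma$ the minimal primes of $J_\Gamma$ are exactly the primes $P_T(\Gamma)$ indexed by $T\in\mathscr{C}(\Gamma)$, with $\operatorname{ht} P_T(\Gamma)=|V(\Gamma)|+|T|-\omega(\Gamma\setminus T)$; comparing with $T=\emptyset$ shows that $J_\Gamma$ is unmixed if and only if
\[
\omega(\Gamma\setminus T)=|T|+1\qquad\text{for every }T\in\mathscr{C}(\Gamma).
\]
Throughout I use that $G\circ_L H$ is connected and non-complete (as $|V(G)|>1$), and I keep the standing assumption that $G$ and $H$ are connected. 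The two tools I will combine are this identity applied to $\Gamma=G\circ_L H$ and the formula $\omega(G\circ_L H\setminus T)=\omega(G\setminus T_0)+\sum_{u\in N}\omega(H_u\setminus T_u)+|T_0\cap L|-|N|$ of Proposition~\ref{Prop: cutset of corona}(5).

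For part~(1), I would argue by contradiction. Suppose some subset of $L$ disconnects $G$, and pick $T_0\subseteq L$ minimal with $\omega(G\setminus T_0)\ge 2$; since $G$ is connected, $T_0\ne\emptyset$, and by minimality $G\setminus(T_0\setminus\{v\})$ is connected for every $v\in T_0$. Taking all $T_u=\emptyset$ in the component formula gives $\omega(G\circ_L H\setminus T_0)=\omega(G\setminus T_0)+|T_0|$ and $\omega(G\circ_L H\setminus(T_0\setminus\{v\}))=1+(|T_0|-1)=|T_0|$, so every $v\in T_0$ is essential and hence $T_0\in\mathscr{C}(G\circ_L H)$. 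The unmixedness identity then forces $\omega(G\setminus T_0)+|T_0|=|T_0|+1$, i.e. $\omega(G\setminus T_0)=1$, contradicting $\omega(G\setminus T_0)\ge2$. Thus $G\setminus L_0$ is connected for every $L_0\subseteq L$; specializing to singletons and to $L_0=L$ yields that $L$ contains no cut vertex of $G$ and that $G\setminus L$ is connected. Finally, for nonempty $L_0\subseteq L$ the same formula, now with $G\setminus L_0$ and each $G\setminus(L_0\setminus\{v\})$ connected, gives $\omega(G\circ_L H\setminus(L_0\setminus\{v\}))=|L_0|<|L_0|+1=\omega(G\circ_L H\setminus L_0)$, so every $v\in L_0$ is essential and $L_0\in\mathscr{C}(G\circ_L H)$.

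For part~(2), fix $v\in L$ and a nonempty cutset $S\in\mathscr{C}(H)$, and let $S_v\subseteq V(H_v)$ be the corresponding copy of $S$. I would test $T:=\{v\}\cup S_v$ for membership in $\mathscr{C}(G\circ_L H)$. Deleting $S_v$ alone leaves the graph connected because every vertex of $H_v$ is joined to $v$, so $\omega(G\circ_L H\setminus(T\setminus\{v\}))=1$, whereas $\omega(G\circ_L H\setminus T)=\omega(G\setminus v)+\omega(H\setminus S)=1+\omega(H\setminus S)\ge 3$ by part~(1); hence $v$ is essential. Essentiality of each $w\in S_v$ is inherited from the essentiality of the corresponding vertex of $S$ in $H$, by the same component formula (with the routine case split according to whether $S\setminus\{w\}$ is empty). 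So $T\in\mathscr{C}(G\circ_L H)$, and the unmixedness identity gives $1+\omega(H\setminus S)=|T|+1=|S|+2$, that is $\omega(H\setminus S)=|S|+1$. Since $S\in\mathscr{C}(H)$ was arbitrary (the case $S=\emptyset$ being trivial), the criterion above shows $J_H$ is unmixed.

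The main obstacle, and the only genuinely non-formal step, is verifying that the two candidate sets $T_0$ (part~1) and $\{v\}\cup S_v$ (part~2) really are cutsets of $G\circ_L H$, since only then may the numerical identity $\omega(\cdot\setminus T)=|T|+1$ be invoked. The device that makes this work is the coning structure of the corona: every vertex of $H_v$ is adjacent to $v$, which is exactly what forces $v$ to be essential in part~(2) and guarantees the clean additive splitting of component counts; and choosing the disconnecting set $T_0$ minimal in part~(1) is what pins down $\omega(G\setminus(T_0\setminus\{v\}))=1$ and makes the contradiction sharp.
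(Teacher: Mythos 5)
Your proof is correct and follows essentially the same route as the paper: both parts rest on the component-count formula of Proposition~\ref{Prop: cutset of corona}(5) combined with the criterion $\omega(\Gamma\setminus T)=|T|+1$ for all $T\in\mathscr{C}(\Gamma)$, and you test exactly the same candidate cutsets, namely $L_0\subseteq L$ in part (1) and $T=\{v\}\cup S_v$ in part (2). The only cosmetic difference is that in part (1) you organize the argument around a minimal disconnecting subset $T_0\subseteq L$ where the paper runs an induction on $|L_0|$; the verifications of cutset membership and the final numerical identities are otherwise identical.
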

\begin{proof} ---
    Suppose that $J_{G\circ_L H}$ is unmixed for $\emptyset \neq L \subset V(G)$ and $|V(G)|>1$.
    
    {\it Proof of (1).} --- On the contrary, assume that $v\in L$ is a cut vertex of $G$. Then clearly $\{v\}\in \mathscr{C}(G\circ_L H)$ and $(G\circ_L H)\setminus v=H_v\sqcup ((G\setminus v)\circ_{L\setminus \{v\}}H)$. Therefore $$\omega((G\circ_L H)\setminus v)=1+\omega(G\setminus v)\geq 3,$$ which implies that $J_{G\circ_L H}$ is not unmixed, a contradiction to the hypothesis and hence the proof of the first part of (1) follows.

    For the remaining part of (1), we use induction on $|L_0|$ for a fixed subset $L_0\subset L$. If $|L_0|=1$, the first part yields $L_0\in \mathscr{C}(G\circ_LH)$, though $G\setminus L_0$ is connected. Now assume that $G\setminus L_0'$ is connected for every $L_0'\subsetneq L_0$ with $L_0'\in \mathscr{C}(G\circ_LH)$. Let $u \in L_0$. Then the induction hypothesis says that $G\setminus (L_0\setminus \{u\})$ is connected. Thus from \ref{Prop: cutset of corona}(5), we get that 
    \begin{align}
        &\omega(G\circ_LH\setminus(L_0\setminus \{u\}))=\omega(G\setminus (L_0\setminus \{u\}))+|(L_0\setminus \{u\})\cap L|=|L_0|;\\
        \text{and}&\nonumber\\
        &\omega(G\circ_LH\setminus L_0)=\omega(G\setminus L_0)+|L_0\cap L|=\omega(G\setminus L_0)+|L_0|\geq 1+|L_0|, \quad(\text{as }L_0\subset L\subsetneq V(G)).\label{eqn: L_0 is a cutset of L-corona}
    \end{align}
    This implies that $L_0\in \mathscr{C}(G\circ_LH)$. Since $J_{G\circ_{L}H}$ is unmixed, it follows from \eqref{eqn: L_0 is a cutset of L-corona} that $$\omega(G\setminus L_0)+|L_0|=\omega(G\circ_LH\setminus L_0)=|L_0|+1,$$ whence $\omega(G\setminus L_0)=1$. This completes the proof.
    
    {\it Proof of (2).} --- Let $T_H \in \mathscr{C}(H)$. It is enough to prove that $\omega(H\setminus T_H)=|T_H|+1$. Consider $T_H$ as a subset of $H_v=H$ for some $v \in L$ and define $T:=\{v\}\cup T_H$. 
    
    {\bf Claim.} {\it $T\in \mathscr{C}(G\circ_LH)$.}
    
    {\it Proof of Claim.} --- Clearly, $(G\circ_L H)\setminus T=((G\setminus v)\circ_{L\setminus \{v\}} H) \sqcup (H_v\setminus T_H)$. Therefore, 
    \begin{equation}\label{eqn: star}
        \omega((G\circ_L H) \setminus T)=\omega(G\setminus v) + \omega(H_v\setminus T_H)\geq \omega(G\setminus v)+2, 
    \end{equation}
    as $T_H\in \mathscr{C}(H_v=H)$. It is evident that $(G\circ_LH) \setminus (T\setminus \{v\})=(G\circ_LH) \setminus T_H$ is connected. Now take $u\in T_H$. Therefore $(G\circ_L H) \setminus (T\setminus \{u\})=((G\setminus v)\circ_{L\setminus \{v\}} H) \sqcup (H_v\setminus (T_H\setminus \{u\}))$ and thus \begin{align*}
        \omega((G\circ_L H) \setminus (T\setminus \{u\}))&=\omega(G\setminus v) + \omega(H_v\setminus (T_H\setminus \{u\}))\\
        &< \omega(G\setminus v)+\omega(H_v\setminus T_H);\quad (\text{since } T_H \in \mathscr{C}(H_v=H))\\
        &=\omega((G\circ_L H) \setminus T).\quad (\text{by } \eqref{eqn: star})
    \end{align*}
    This proves the claim.
    
    Since $v\in L$, by what we proved in (1), it follows that $v$ is a non-cut vertex of $G$. Therefore $G\setminus v$ is connected and is non-empty too as $|V(G)|>1$. Using the unmixedness of $J_{G\circ_L H}$, it follows from \eqref{eqn: star},
    \begin{align*}
        \omega(H\setminus T_H)=\omega(H_v\setminus T_H) &=\omega((G\circ_LH)\setminus T) - \omega(G\setminus v),\\
        &= (|T|+1) - 1, \quad(\text{as } T\in \mathscr{C}(G\circ_LH) \text{ and } G\setminus v \text{ is connected})\\
        &=|T|,\\
        &=|T_H|+1.
    \end{align*}
    This completes the proof.
\end{proof}

\begin{remark}
    Related to Theorem \ref{Thm: Unmixedness of L-corona of G and H implies H is unmixed}, the case when $|V(G)|=1$ with $G=\{v\}$, i.e., $G\circ_{L}H$ coincides with ${\rm cone}(v,H):=v\ast H$ --- the cone of $v$ on $H$, was earlier considered by Rauf--Rinaldo in \cite[Lemma 3.3]{RR14}. Note that, unlike to our above result, $J_{{\rm cone}(v,H)}$ can be unmixed even though $J_H$ is not so --- e.g., \cite[Figure 1]{RR14}.
\end{remark}

\begin{example}\label{Ex: Not unmixed gen. corona}
This example shows that the converse of Theorem \ref{Thm: Unmixedness of L-corona of G and H implies H is unmixed} is not quite true even though both $J_G$ and $J_H$ are unmixed. It seems like there must be having a very strong assumption on the choice of the subset $L$ of $V(G)$. In Figure \ref{fig: corona}, $J_G$ is unmixed using \cite[Proposition 2]{Rin13}. As $J_H$ is a principal ideal, $J_H$ is also unmixed. However, $J_{G\circ_L H}$ is not unmixed. Indeed, for the cutset $S:=\{u,w\} \subset V(G\circ_L H)$, it follows that $\omega(G\circ_L H\setminus S)=4\neq |S|+1=3.$
\end{example}

\begin{figure}
    \begin{tikzpicture}[scale=1]
        \draw [line width=1pt] (-12,1) -- (-11,1);
        \draw [line width=1pt] (-12,1) -- (-12,0);
        \draw [line width=1pt] (-12,0) -- (-11,0);
        \draw [line width=1pt] (-11,0) -- (-11,1);
        \draw [line width=1pt] (-12,1) -- (-12,2);
        \draw [line width=1pt] (-11,2) -- (-11,1);
        
        \draw [line width=1pt] (-9,0.5) -- (-8,0.5);
        \draw [line width=1pt] (-5,1) -- (-4,1);
        \draw [line width=1pt] (-5,1) -- (-5,0);
        \draw [line width=1pt] (-5,0) -- (-4,0);
        \draw [line width=1pt] (-4,0) -- (-4,1);
        \draw [line width=1pt] (-5,1) -- (-5,2);
        \draw [line width=1pt] (-4,2) -- (-4,1);
        \draw [line width=1pt] (-5.98,-0.43) -- (-5,0);
        \draw [line width=1pt] (-5.98,-0.43) -- (-5.5,-0.97);
        \draw [line width=1pt] (-5.5,-0.97) -- (-5,0);
        \draw [line width=1pt] (-4,0) -- (-3.52,-0.97);
        \draw [line width=1pt] (-4,0) -- (-3.02,-0.47);
        \draw [line width=1pt] (-3.52,-0.97) -- (-3.02,-0.47);
        
        \begin{scriptsize}
        
            \fill (-12,1) circle (1.5pt);
            \fill (-11,1) circle (1.5pt);
            \fill (-11,0) circle (1.5pt);
            \fill (-12,0) circle (1.5pt);
            \fill (-12,2) circle (1.5pt);
            \fill (-11,2) circle (1.5pt);
            
            \draw (-12.25,1) node {$u$};
            \draw (-12.25,0) node {$v$};
            \draw (-10.75,0) node {$w$};
            \draw (-10.75,1) node {$x$};
            
            \draw (-11.5,-0.7) node {$G$};
            \draw (-10,0.5) node {$\circ_{L=\{v,w\}}$};
            \fill (-9,0.5) circle (1.5pt);
            \fill (-8,0.5) circle (1.5pt);
            \draw (-8.5,-0.7) node {$H$};
            \draw (-7,0.5) node {$=$};
            \fill (-5,1) circle (1.5pt);
            \fill (-4,1) circle (1.5pt);
            \fill (-4,0) circle (1.5pt);
            \fill (-5,0) circle (1.5pt);
            \fill (-5,2) circle (1.5pt);
            \fill (-4,2) circle (1.5pt);
            \fill (-3.52,-0.97) circle (1.5pt);
            \fill (-3.02,-0.47) circle (1.5pt);
            \fill (-5.5,-0.97) circle (1.5pt);
            \fill (-5.98,-0.43) circle (1.5pt);
            
            \draw (-5.25,0.1) node {$v$};
            \draw (-3.75,0.1) node {$w$};
            \draw (-5.25,1) node {$u$};
            \draw (-3.75,1) node {$x$};
            
            \draw (-4.5,-0.7) node {$G\circ_L H$};
        \end{scriptsize}
    \end{tikzpicture}
    \caption{The corona product}\label{fig: corona}
\end{figure}
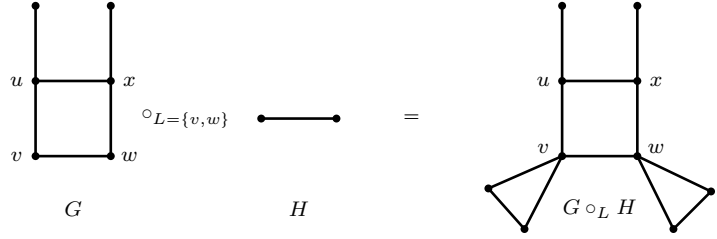
            
\begin{cor}\label{Cor: t-corona with complete is unmixed iff the other is unmixed}
    Let $H$ be any graph. For integers $n$ and $\ell$ such that $n\geq 2$ and $1\leq \ell<n$, $K_n\circ_\ell H$ is unmixed if and only if $H$ is unmixed.
\end{cor}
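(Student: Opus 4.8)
The plan is to handle the two implications separately, since one is immediate from the theorem already proved while the other needs a direct cutset computation. For the forward direction, I would simply invoke Theorem \ref{Thm: Unmixedness of L-corona of G and H implies H is unmixed} with $G = K_n$: the hypotheses $n \geq 2$ and $1 \leq \ell < n$ guarantee that $|V(K_n)| > 1$ and that any $L$ with $|L| = \ell$ is a non-empty \emph{proper} subset of $V(K_n)$. Hence, if $J_{K_n \circ_\ell H}$ is unmixed, part (2) of that theorem gives at once that $J_H$ is unmixed, i.e.\ $H$ is unmixed.

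For the converse I would use the standard characterization that a connected graph $\Gamma$ is unmixed if and only if $\omega(\Gamma \setminus T) = |T| + 1$ for every $T \in \mathscr{C}(\Gamma)$. Assuming $H$ unmixed, I fix an arbitrary non-empty cutset $T = T_0 \cup (\bigcup_{v \in L} T_v)$ of $K_n \circ_\ell H$ (the case $T = \emptyset$ being trivial) and aim to show $\omega(K_n\circ_\ell H\setminus T) = |T| + 1$. The two features that make $G = K_n$ special are: (i) since $T_0 \subsetneq V(K_n)$ by Proposition \ref{Prop: cutset of corona}(1), the induced subgraph $K_n \setminus T_0$ is again a non-empty complete graph, so $\omega(K_n \setminus T_0) = 1$; and (ii) every vertex of $K_n$ is simplicial, whence $(T_0)_{K_n-sim} = T_0$, and Proposition \ref{Prop: cutset of corona}(6) upgrades the containment $(T_0)_{K_n-sim} \subset L$ into the full containment $T_0 \subset L$, giving $|T_0 \cap L| = |T_0|$.

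With these in hand the computation is mechanical. Writing $N = \{v \in L : T_v \neq \emptyset\}$ and applying the component formula of Proposition \ref{Prop: cutset of corona}(5), I obtain
$$\omega(K_n \circ_\ell H \setminus T) = 1 + \sum_{v \in N} \omega(H_v \setminus T_v) + |T_0| - |N|.$$
By Proposition \ref{Prop: cutset of corona}(3) each $T_v$ with $v \in N$ is a non-empty cutset of $H_v \cong H$, so the unmixedness of $H$ yields $\omega(H_v \setminus T_v) = |T_v| + 1$. Substituting, the terms $\pm |N|$ cancel and, using $|T| = |T_0| + \sum_{v \in N} |T_v|$ (valid because $T_v = \emptyset$ for $v \notin N$ by part (2)), I reach $\omega(K_n \circ_\ell H \setminus T) = 1 + \sum_{v\in N}|T_v| + |T_0| = |T| + 1$, as required.

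I expect the main obstacle to be conceptual rather than computational: recognizing that the simpliciality of \emph{every} vertex of $K_n$ is precisely what promotes Proposition \ref{Prop: cutset of corona}(6) into the containment $T_0 \subset L$. This is exactly the step that fails for a general $G$, which is why the converse breaks down there (cf.\ Example \ref{Ex: Not unmixed gen. corona}). Once $T_0 \subset L$ and $\omega(K_n \setminus T_0) = 1$ are secured, the only remaining care is in matching the cone-point contribution $|T_0 \cap L| - |N|$ against the per-copy component counts $\omega(H_v\setminus T_v)$, which the cancellation above makes transparent.
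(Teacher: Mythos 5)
Your proposal is correct and follows essentially the same route as the paper: the forward direction is the identical appeal to Theorem \ref{Thm: Unmixedness of L-corona of G and H implies H is unmixed}, and the converse is the same computation via Proposition \ref{Prop: cutset of corona}(5) after establishing the key claim $T_0 \subset L$ and $\omega(K_n\setminus T_0)=1$. The only (harmless) divergence is in justifying $T_0\subset L$: the paper proves it by a direct cutset contradiction (removing $w\in T_0\setminus L$ cannot decrease the component count since $K_n\setminus(T_0\setminus\{w\})$ stays connected), whereas you deduce it more slickly from Proposition \ref{Prop: cutset of corona}(6) together with the observation that every vertex of $K_n$ is simplicial --- a valid shortcut that yields the same conclusion.
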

\begin{proof} ---
    Let $G:=K_n\circ_\ell H$ for integers $n$, $\ell$ with $n\geq 2$ and $1\leq \ell<n$ such that the $\ell$-copies of $H$ are attached to $L\subsetneq [n]=V(K_n)$. 
    
    If $J_G$ is unmixed, then Theorem \ref{Thm: Unmixedness of L-corona of G and H implies H is unmixed} implies that $J_H$ is also unmixed. 
    
    Conversely, assume that $J_H$ is unmixed. Suppose $\emptyset \neq T :=T_0\cup(\bigcup_{v \in L}T_v)\in \mathscr{C}(G)$ for some $T_0 \subset [n]$ and $T_v \in \mathscr{C}(H_v=H)$ for all $v\in L$. Then by Proposition \ref{Prop: cutset of corona}(1), $T_0\neq \emptyset$. Let $N:=\{v\in L : T_v\neq \emptyset\}$. Then using Proposition \ref{Prop: cutset of corona}(2), it follows that $N\subset T_0\cap L$. To prove this converse part, it is sufficient to check that $\omega(G \setminus T)=|T|+1$.
    
{\bf Claim.} $T_0\subset L$. 

{\it Proof of Claim.} --- If not, let $w \in T_0\setminus L$. Notice that $T_0\setminus \{w\}$ being a proper subset of $[n]$, the induced subgraph $K_n\setminus (T_0\setminus \{w\})$ is non-empty and connected. Thus using \ref{Prop: cutset of corona}(5) we get that

\begin{align*}
    \omega(G \setminus (T\setminus \{w\})&=\omega(K_n\setminus (T_0\setminus \{w\})+\sum\limits_{v\in N}\omega(H_v\setminus T_v)+|(T_0\setminus \{w\})\cap L|-|N|,\\
    &=1+\sum\limits_{v\in N}\omega(H_v\setminus T_v)+|T_0\cap L|-|N|,\quad (\text{as } w\notin L)\\
    &\geq\omega(G\setminus T_0)+\sum\limits_{v\in N}\omega(H_v\setminus T_v)+|T_0\cap L|-|N|,\quad (\text{equality holds iff } T_0\subsetneq [n])\\
    &=\omega(G\setminus T),
\end{align*}
contradicting the fact that $T\in \mathscr{C}(G)$. This proves the claim.

Once again it follows from Proposition \ref{Prop: cutset of corona}(5) that 
\begin{align*}
\omega(G \setminus T)&=\omega(K_n\setminus T_0)+\sum\limits_{v\in N}\omega(H_v\setminus T_v)+|T_0\cap L|-|N|,\\
    &=1+\sum\limits_{v\in N}\omega(H_v\setminus T_v)+|T_0|-|N|, \quad (\text{as } T_0\subset L\subsetneq [n])\\
    &=1+\sum\limits_{v\in N}(|T_v|+1)+|T_0|-|N|,\\
    &\hspace{1cm} (\text{as } \emptyset \neq T_v\in \mathscr{C}(H_v), \,\forall v \in N \text{ by Proposition }\ref{Prop: cutset of corona}(3) \text{ and } H \text{ is unmixed})\\
    &=1+\sum\limits_{v\in N}|T_v|+|T_0|,\\
    &=|T|+1,
\end{align*}
as desired. This completes the proof.
\end{proof}

As we remarked earlier in the Introduction, Kiani and Saeedi Madani proved that for two graphs $G$ and $H$ with $|V(G)|\geq 2$, the unmixedness of $J_{G\circ H}$ is equivalent to the Cohen-Macaulayness of $J_{G\circ H}$ and is further equivalent to the fact that both $G$ and $H$ are complete graphs on respective number of vertices (cf. \cite[Theorem 5.4]{KSM15}). 

Unlike the case of the usual corona products, in the case of almost all other generalized corona products, we observe the following interesting result, which provides abundant examples of unmixed but not Cohen-Macaulay binomial edge ideals in this context.

\begin{cor}\label{Cor: t-corona with complete is CM iff the other is CM}
    Let $H$ be any graph. For integers $n$ and $\ell$ such that $n\geq 2$ and $1\leq \ell<n$, $K_n\circ_\ell H$ is Cohen-Macaulay if and only if $H$ is Cohen-Macaulay.
\end{cor}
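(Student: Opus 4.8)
The plan is to deduce the Cohen--Macaulay equivalence from the numerical criterion $\depth = \dim$, by pairing the dimension formula of Theorem \ref{Thm: Dimension of l-corona with complete graph} with a companion depth formula of the same shape. Write $G := K_n\circ_\ell H$ with $1\le \ell<n$, and assume throughout that $H$ is connected (the setting of Theorem \ref{Thm: Dimension of l-corona with complete graph}), which gives $\dim(S/J_G)=n-\ell+1+\ell\dim(S_H/J_H)$. The heart of the argument is to prove the parallel identity
\begin{equation*}
\depth(S/J_G)=n-\ell+1+\ell\cdot\depth(S_H/J_H),\qquad 1\le \ell<n.
\end{equation*}
Granting this, both implications follow at once: since $\depth(S_H/J_H)\le\dim(S_H/J_H)$ always, and the two formulas share the common summand $n-\ell+1$, we get $\depth(S/J_G)=\dim(S/J_G)$ if and only if $\ell\cdot\depth(S_H/J_H)=\ell\cdot\dim(S_H/J_H)$, which (as $\ell\ge 1$) is equivalent to $\depth(S_H/J_H)=\dim(S_H/J_H)$, i.e.\ to $H$ being Cohen--Macaulay. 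This settles the corollary in a single comparison. Observe also that the forward implication is consistent with Corollary \ref{Cor: t-corona with complete is unmixed iff the other is unmixed}: if $J_G$ is Cohen--Macaulay it is unmixed, whence $H$ is already known to be unmixed, so the depth computation only has to upgrade unmixedness to Cohen--Macaulayness.

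To establish the depth formula I would induct on $\ell$. The base case $\ell=0$ is $G=K_n$, where $S/J_{K_n}$ is the (Cohen--Macaulay) generic $2\times 2$ determinantal ring with $\depth=n+1=n-0+1$. For the inductive step, fix an apex $v\in L$; once a copy of $H$ is coned over $v$ it becomes a non-simplicial vertex of $G$, so Ohtani's Lemma \ref{Lem: Non-simplicial vertex SES} applies and yields the short exact sequence \eqref{ohtani-ses}. Here $G\setminus v$ splits as the disjoint union $H_v\sqcup (K_{n-1}\circ_{\ell-1}H)$, while the contraction $G_v$ is again a corona-type graph, now on the enlarged clique $\{v\}\cup N_G(v)$ with the remaining $\ell-1$ copies of $H$ attached at the old apexes of $L\setminus\{v\}$. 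I would feed the depths of these three pieces --- computed by the inductive hypothesis together with the additivity of depth over disjoint unions and the standard behaviour of $\depth$ under passing to $S/((x_v,y_v)+J_{G\setminus v})$ --- into the Depth Lemma to read off $\depth(S/J_G)$.

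The decisive structural point, and the reason the hypothesis $\ell<n$ cannot be dropped, is that a bare (hence free, simplicial) vertex of $K_n$ survives in $G$ and in every intermediate graph $K_{n-1}\circ_{\ell-1}H$ of the induction; this surviving free vertex is exactly what supplies the extra summand $+1$, equivalently what keeps the core clique connected after any apex is deleted (cf.\ Proposition \ref{Prop: cutset of corona}). By contrast, when $\ell=n$ the last bare vertex is consumed and, for non-complete $H$, Theorem \ref{Thm: Depth for corona with complete graph} shows the ``$+1$'' disappears (the depth becomes $n\cdot\depth(S_H/J_H)$), which is precisely why the full corona is almost never Cohen--Macaulay.

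I expect the main obstacle to be the depth bookkeeping in the inductive step: computing $\depth(S/J_{G_v})$ for the contraction term (itself a larger-clique corona product, so a genuinely new instance rather than a smaller one) and verifying that the three-term comparison in the Depth Lemma returns the asserted value uniformly, without a spurious case split between complete and non-complete $H$. A secondary care point is that only one inequality is substantive, since $\depth\le\dim$ holds automatically; thus it suffices to prove $\depth(S/J_G)\ge n-\ell+1+\ell\cdot\depth(S_H/J_H)$ and invoke the dimension formula for the reverse bound. Once the depth formula is in hand, the corollary is immediate from the displayed comparison.
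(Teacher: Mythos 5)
Your top-level strategy coincides with the paper's: Corollary \ref{Cor: t-corona with complete is CM iff the other is CM} is deduced there from the Cohen--Macaulay defect formula (Corollary \ref{Cor: CM-def}), i.e., from comparing the dimension formula of Theorem \ref{Thm: Dimension of l-corona with complete graph} with the depth formula $\depth(S/J_G)=n-\ell+1+\ell\cdot\depth(S_H/J_H)$, which is exactly Theorem \ref{Thm: Depth and regularity for t-corona with complete graph}(1), proved by the same Ohtani decomposition you describe ($G_v=K_{n+h}\circ_{\ell-1}H$, $G_v\setminus v=K_{n+h-1}\circ_{\ell-1}H$, $G\setminus v=H\sqcup(K_{n-1}\circ_{\ell-1}H)$, inducting on $\ell$ with the clique size allowed to grow). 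The gap is in how you propose to finish the depth formula. Your claim that ``only one inequality is substantive'' because the reverse bound follows from $\depth\le\dim$ is false: the dimension formula only yields $\depth(S/J_G)\le n-\ell+1+\ell\cdot\dim(S_H/J_H)$, which agrees with the needed bound $n-\ell+1+\ell\cdot\depth(S_H/J_H)$ precisely when $H$ is already Cohen--Macaulay. And the non-Cohen--Macaulay case is the one that matters: for the direction ``$G$ Cohen--Macaulay $\Rightarrow$ $H$ Cohen--Macaulay'', setting $\depth(S/J_G)=\dim(S/J_G)$ against your lower bound gives only $\ell\cdot\dim(S_H/J_H)\ge \ell\cdot\depth(S_H/J_H)$, which is vacuous; without a genuine upper bound on $\depth(S/J_G)$ you cannot force $\depth(S_H/J_H)=\dim(S_H/J_H)$. (Your forward direction is fine: the lower bound, which does propagate through Depth Lemma (a) even if the induction only carries inequalities, combined with $\depth\le\dim$ gives $G$ Cohen--Macaulay when $H$ is.)

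The Depth Lemma also cannot ``read off'' the exact value uniformly, as you hoped: with the exact depths of the three pieces, one computes $\depth(N)-\depth(P)=\depth(S_H/J_H)-h$ in the sequence \eqref{ohtani-ses}, so in the boundary case $\depth(S_H/J_H)=h$ the hypotheses of parts (b) and (c) both fail and the Lemma returns only the lower bound. This is exactly why the paper proves the upper bound differently: it shows $\pd(S/J_G)\ge p:=n+\ell-1+\ell p_H$ by exhibiting a nonvanishing extremal Betti number ($\beta_{p,p+\ell r_H}(S/J_G)$ for $n=2$, $\beta_{p,p+\ell r_H+1}(S/J_G)$ for $n\ge 3$), extracted from the long exact sequence of $\Tor$ \eqref{ohtani-tor} using the inequality $p_H\ge h-1$ of \cite{BN17} to control the connecting maps, and then converts this to the depth upper bound via Auslander--Buchsbaum. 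Your proposal contains no substitute for this Betti-number step, so as written the ``only if'' direction of the corollary remains unproven; the rest of your bookkeeping (the identification of $G_v$, $G_v\setminus v$, $G\setminus v$, the role of the surviving bare vertex when $\ell<n$, and the comparison of the two formulas) matches the paper.
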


\begin{proof} --- The proof follows immediately from Corollary \ref{Cor: CM-def}\footnote{For the proof of Corollary \ref{Cor: CM-def}, we refer the reader to Subsection \ref{Subsection: Depth and regularity}.}.
\end{proof}

\begin{cor}\label{Cor: All L-corona are unmixed implies that the first graph is complete}
    Let $H$ be any graph such that $J_H$ is unmixed. For a graph $G$ with $|V(G)|>1$, $J_{G\circ_L H}$ is unmixed for every non-empty proper subset $L$ of $V(G)$ if and only if $G$ is a complete graph.
\end{cor}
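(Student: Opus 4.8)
The plan is to prove the two implications separately; both reduce quickly to results already established, so the work lies mostly in verifying that the hypotheses of those results are met.

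For the direction ``$G$ complete $\Rightarrow$ $J_{G\circ_L H}$ unmixed for every such $L$'', suppose $G = K_n$ with $n = |V(G)| > 1$, and let $L$ be an arbitrary non-empty proper subset, so that $\ell := |L|$ satisfies $1 \le \ell < n$. Since $G$ is complete, subsets of a fixed size are interchangeable and $G \circ_L H = K_n \circ_\ell H$. I would then simply invoke Corollary \ref{Cor: t-corona with complete is unmixed iff the other is unmixed}: $K_n \circ_\ell H$ is unmixed if and only if $H$ is, and $J_H$ is unmixed by hypothesis. Hence $J_{G \circ_L H}$ is unmixed for every non-empty proper $L$.

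For the converse I would argue by contraposition: assuming $G$ is not complete, I produce a single non-empty proper subset $L$ for which $J_{G \circ_L H}$ fails to be unmixed. Since $G$ is not complete, it admits a non-empty cutset $T \in \mathscr{C}(G)$ (guaranteed for every non-complete graph, as recorded in the Preliminaries). First I would check that such a $T$ is automatically proper and genuinely disconnects $G$: choosing any $v \in T$, the defining inequality of a cutset gives $\omega(G \setminus T) > \omega(G \setminus (T \setminus \{v\})) \ge 1$, whence $\omega(G \setminus T) \ge 2$; in particular $G \setminus T \ne \emptyset$, so $T \ne V(G)$. Thus $L := T$ is a non-empty proper subset of $V(G)$ with $G \setminus L$ disconnected.

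Finally I would invoke Theorem \ref{Thm: Unmixedness of L-corona of G and H implies H is unmixed}(1), whose first part states that unmixedness of $J_{G \circ_L H}$ forces $G \setminus L$ to be connected. Contrapositively, our choice $L = T$ makes $G \setminus L$ disconnected, so $J_{G \circ_T H}$ is not unmixed, which violates the hypothesis that every non-empty proper subset yields an unmixed corona. There is no serious obstacle here: the statement is essentially a repackaging of Corollary \ref{Cor: t-corona with complete is unmixed iff the other is unmixed} and Theorem \ref{Thm: Unmixedness of L-corona of G and H implies H is unmixed}, and the only point requiring care is the elementary verification that a non-empty cutset is proper and separates $G$, which is precisely what allows Theorem \ref{Thm: Unmixedness of L-corona of G and H implies H is unmixed}(1) to apply.
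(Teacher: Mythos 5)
Your proof is correct and follows essentially the same route as the paper: the forward implication is exactly the appeal to Corollary \ref{Cor: t-corona with complete is unmixed iff the other is unmixed}, and the converse rests on the connectivity conclusion of Theorem \ref{Thm: Unmixedness of L-corona of G and H implies H is unmixed}(1). The only (harmless) difference is the choice of witness: you take $L$ to be a non-empty cutset of the non-complete graph $G$ (with a careful check that it is proper and disconnects $G$), while the paper takes $L = V(G)\setminus\{v,w\}$ for a non-adjacent pair $v,w$; both choices make $G\setminus L$ disconnected and yield the same contradiction.
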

\begin{proof} ---
    The direction of proving unmixedness of all generalized corona product $G\circ_L H$ assuming $G$ is a complete graph straight away follows from Corollary \ref{Cor: t-corona with complete is unmixed iff the other is unmixed}. 

    We will prove the converse now. Let $|V(G)|=n$. If possible, let $G$ be not a complete graph. Therefore there are $v,w\in V(G)$ such that $\{v,w\}\notin E(G)$. Let $L:=V(G)\setminus \{v,w\}$, which is a proper subset of $V(G)$. By the given hypothesis, $J_{G\circ_{L}H}$ is unmixed. Therefore Theorem \ref{Thm: Unmixedness of L-corona of G and H implies H is unmixed} implies that $G\setminus L$ is connected. However, this leads to a contradiction to our assumption that $\{v,w\}\notin E(G)$. Therefore it follows that any two vertices of $G$ are connected by an edge in $G$, which implies that $G=K_n$, a complete graph with $n$-vertices. This completes the proof.
\end{proof}

\begin{theorem}\label{Thm: Acc. system of L-corona of G and H implies H has acc. system}
    Let $G$ and $H$ be two graphs such that $|V(G)|>1$ and $L$ be a non-empty subset of $V(G)$. If $G\circ_L H$ satisfies an accessible system of cutsets, then $H$ satisfies the same. 
\end{theorem}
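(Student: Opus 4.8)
The plan is to prove the contrapositive-free direct statement: assuming $\mathscr{C}(G\circ_L H)$ is an accessible set system, show $\mathscr{C}(H)$ is accessible. Take an arbitrary non-empty $T_H \in \mathscr{C}(H)$; I must produce some $t \in T_H$ with $T_H\setminus\{t\} \in \mathscr{C}(H)$. The natural device is to lift $T_H$ into the big graph exactly as in the proof of Theorem \ref{Thm: Unmixedness of L-corona of G and H implies H is unmixed}(2): fix a vertex $v \in L$ (note $L \neq \emptyset$), view $T_H$ as a subset of the copy $H_v$, and set $T := \{v\}\cup T_H$. That earlier argument already shows $T \in \mathscr{C}(G\circ_L H)$, so I may quote it (or re-run the short component count). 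Now apply the accessibility hypothesis of $G\circ_L H$ to $T$.

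Accessibility gives some $t \in T$ with $T\setminus\{t\} \in \mathscr{C}(G\circ_L H)$. The key case analysis is whether $t = v$ or $t \in T_H$. First I would rule out $t = v$: if $T\setminus\{v\} = T_H \in \mathscr{C}(G\circ_L H)$, then by Proposition \ref{Prop: cutset of corona}(1) the $G$-part of this cutset, namely $(T_H)_0 = T_H \cap V(G) = \emptyset$, must be non-empty — a contradiction, since $T_H$ lives entirely inside $H_v$. (Equivalently, I already noted in the earlier proof that $(G\circ_L H)\setminus T_H$ is connected, so $T_H$ is not a cutset.) Hence $t \in T_H$, and I set $T_H' := T_H\setminus\{t\}$, so that $T\setminus\{t\} = \{v\}\cup T_H'$ is a cutset of $G\circ_L H$.

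It then remains to descend: from $\{v\}\cup T_H' \in \mathscr{C}(G\circ_L H)$ conclude $T_H' \in \mathscr{C}(H)$. Here I would invoke Proposition \ref{Prop: cutset of corona}(3) applied to the cutset $\{v\}\cup T_H'$ at the vertex $v \in T_0$: its part in the copy $H_v$ is $T_H'$, which is therefore either empty or a cutset of $H_v = H$. If $T_H' = \emptyset$ then $T_H = \{t\}$ is a singleton, and $\emptyset \in \mathscr{C}(H)$ by definition of cutset, so accessibility holds trivially. Otherwise $T_H' \in \mathscr{C}(H)$, which is exactly the witness making $T_H$ accessible in $H$. Since $T_H$ was an arbitrary non-empty cutset of $H$, this establishes that $\mathscr{C}(H)$ is an accessible set system.

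The main obstacle I anticipate is the descent step in the last paragraph: I must be certain that the chosen $t$ really lands in $T_H$ (the exclusion of $t=v$) and that removing it keeps $\{v\}\cup T_H'$ a genuine cutset, so that Proposition \ref{Prop: cutset of corona}(3) applies cleanly. The exclusion of $t=v$ is the crux, and it hinges on the structural fact (Proposition \ref{Prop: cutset of corona}(1), or the connectivity observation) that a subset contained entirely inside a single copy $H_v$ can never be a cutset of the whole corona. Once that is secured, everything else is a direct application of the cutset-structure results of Proposition \ref{Prop: cutset of corona}, so I do not expect further technical difficulty.
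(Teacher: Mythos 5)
Your proposal is correct and follows essentially the same route as the paper's proof: lift $T_H$ to $T=\{v\}\cup T_H$ via the claim in Theorem \ref{Thm: Unmixedness of L-corona of G and H implies H is unmixed}, apply accessibility of $G\circ_L H$, rule out $t=v$ by the connectivity of $(G\circ_L H)\setminus T_H$, and descend to conclude $T_H\setminus\{t\}\in\mathscr{C}(H)$. The only cosmetic difference is that you package the descent as an application of Proposition \ref{Prop: cutset of corona}(3) (whose proof is exactly the component count the paper redoes inline, and which does not actually use the properness of $L$), and you explicitly dispose of the harmless singleton case $T_H'=\emptyset$, which the paper leaves implicit.
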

\begin{proof} ---
    Suppose $G\circ_L H$ satisfies an accessible system of cutsets. Let $T_H \in \mathscr{C}(H)$ which is non-empty. Choose $v\in L$ and consider $T_H$ as a cutset of $H_v=H$. Define $T:=\{v\}\cup T_H$. Then $T\in \mathscr{C}(G\circ_L H)$ (see the claim in Theorem \ref{Thm: Unmixedness of L-corona of G and H implies H is unmixed}. Since $(G\circ_L H)\setminus T_H$ is connected and $G\circ_L H$ satisfies the accessible system of cutsets, it turns out that there is $u\in T_H$ such that $T\setminus \{u\}\in \mathscr{C}(G\circ_L H)$. To complete the proof, it is enough to show that $T_H\setminus \{u\}\in \mathscr{C}(H_v=H)$.
    
    Let $u'\in T_H\setminus \{u\}$. Thus using the hypothesis, we have the following:
    \begin{align*}
        &\omega((G\circ_LH)\setminus (T\setminus\{u,u'\})) < \omega((G\circ_LH)\setminus (T\setminus\{u\})) \quad (\text{since } T\setminus \{u\}\in \mathscr{C}(G\circ_L H))\\
        \text{or,}\quad &\omega(G\setminus v)+\omega(H_v\setminus (T_H\setminus\{u,u'\})) < \omega(G\setminus v)+\omega(H_v\setminus (T_H\setminus\{u\}));\\
        \text{or,}\quad &\omega\left(H_v\setminus \left((T_H\setminus\{u\})\setminus \{u'\}\right)\right) < \omega(H_v\setminus (T_H\setminus\{u\})).
    \end{align*}
    This completes the proof.
\end{proof}

Now, we prove the converse of the above result in an important special case.

\begin{theorem}\label{Thm: If H has acc. sys of cutsets then l-corona of H with complete graph also has the same}
    Let $H$ be any graph that satisfies an accessible system of cutsets. Then the graph $G:=K_n\circ_\ell H$ for integers $n$ and $\ell$ such that $n\geq 2$ and $1\leq \ell<n$ also satisfies an accessible system of cutsets.
\end{theorem}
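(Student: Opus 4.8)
The plan is to prove the converse of Theorem \ref{Thm: Acc. system of L-corona of G and H implies H has acc. system} in the special case $G = K_n$, showing that accessibility of the cutset system of $H$ lifts to $K_n \circ_\ell H$. The key structural input is the explicit description of cutsets of the $L$-corona product from Proposition \ref{Prop: cutset of corona}. Since $G = K_n$ is complete, every subset $L$ of size $\ell$ behaves identically, and the cutsets $T_0 \subset [n]$ of the base are forced to satisfy $T_0 \subsetneq [n]$ with $T_0 \subset L$ (this last containment was exactly the content of the Claim proved inside Corollary \ref{Cor: t-corona with complete is unmixed iff the other is unmixed}; I expect the same argument to apply here since it relied only on the completeness of $K_n$, not on unmixedness of $H$). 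So I first set up a generic non-empty cutset $T = T_0 \cup (\bigcup_{v \in L} T_v)$ with $T_0 \subset L \subsetneq [n]$, and let $N = \{v \in L : T_v \neq \emptyset\}$, recalling from Proposition \ref{Prop: cutset of corona}(2) that $N \subset T_0$ and from (3) that each such $T_v \in \mathscr{C}(H_v)$.

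The core of the argument is to find a single vertex $t \in T$ such that $T \setminus \{t\} \in \mathscr{C}(G)$, by case analysis on the structure of $T$. I would split into two regimes. In the first regime, where there exists a vertex $v \in N$ (so $T_v$ is a non-empty cutset of $H_v = H$), I invoke the accessibility of $H$ to produce $t \in T_v$ with $T_v \setminus \{t\} \in \mathscr{C}(H)$; the goal is then to verify that $T \setminus \{t\}$ is still a cutset of $G$. Using the count in Proposition \ref{Prop: cutset of corona}(5), removing $t$ from $T_v$ drops $\omega(H_v \setminus T_v)$ by at least one (and exactly one if $H$ is additionally unmixed, but accessibility alone gives that $T_v \setminus \{t\}$ remains a cutset, so the component count strictly decreases as a function of the component-count of $H_v$), and since all other summands are unchanged, the minimality condition defining a cutset propagates upward --- I would check that for every remaining $s \in T \setminus \{t\}$, removing $s$ still strictly decreases the global component count, inheriting this from the corresponding local statement in $H_v$, in $G = K_n$, or in another copy $H_{v'}$.

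In the second regime, where $N = \emptyset$, we have $T = T_0 \subset L \subsetneq [n]$, so $T$ consists purely of vertices of the base $K_n$ with their attached (full, un-cut) copies of $H$. Here I want to peel off any single vertex $t \in T_0$: since $K_n$ is complete, $T_0 \setminus \{t\}$ is still a proper subset, $K_n \setminus (T_0 \setminus \{t\})$ stays connected, and by the formula in (5) the component count $\omega(G \setminus (T_0 \setminus \{t\})) = 1 + |T_0 \setminus \{t\}| = |T_0|$ while $\omega(G \setminus T_0) = 1 + |T_0|$, so removing $t$ strictly decreases $\omega$ by exactly one. This shows $T_0 \in \mathscr{C}(G)$ explicitly (consistent with \eqref{eqn: L_0 is a cutset of L-corona}) and that $T_0 \setminus \{t\}$ remains a cutset, giving accessibility trivially in this regime.

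The main obstacle I anticipate is the bookkeeping in the first regime: verifying the full cutset (minimality) condition for $T \setminus \{t\}$, namely that deleting \emph{any} further vertex strictly decreases the number of components. The subtle point is that accessibility of $H$ only guarantees $T_v \setminus \{t\} \in \mathscr{C}(H)$ for a \emph{suitable} $t$, not that the remaining structure interacts cleanly with the other copies and the base; I must confirm that the contributions from $K_n$, from the other $H_{v'}$ with $v' \in N \setminus \{v\}$, and from the $T_0$-vertices carrying empty $T_{v'}$ are all left intact so that their individual minimality conditions (which hold because $T$ was a cutset) are undisturbed by the single deletion of $t \in T_v$. Once this separation of the component-count formula into independent summands is justified via Proposition \ref{Prop: cutset of corona}(5) applied to each relevant induced subgraph, the conclusion $T \setminus \{t\} \in \mathscr{C}(G)$ follows, completing the induction and hence the proof.
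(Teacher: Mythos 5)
Your proposal is correct and takes essentially the same route as the paper: for a cutset $T$ with some $T_v\neq\emptyset$, apply accessibility of $H$ inside that copy to produce $t\in T_v$ with $T_v\setminus\{t\}\in\mathscr{C}(H_v)$, then verify minimality of $T\setminus\{t\}$ by cases according to where the second deleted vertex lies (in $T_0$, in another $T_{v'}$, or in $T_v\setminus\{t\}$), transferring each strict inequality through the additive component-count formula of Proposition \ref{Prop: cutset of corona}(5); this is precisely the paper's Cases 1--3. One genuine difference is worth recording: your second regime ($N=\emptyset$) is not vacuous, whereas the paper dismisses it with the assertion that $N\neq\emptyset$ ``otherwise $T=\emptyset$'', which is false --- for instance in $K_2\circ_1 K_1=P_3$ the middle vertex is a nonempty cutset with $N=\emptyset$, and more generally every nonempty $T_0\subseteq L$ with all $T_v=\emptyset$ lies in $\mathscr{C}(K_n\circ_\ell H)$ by the count underlying \eqref{eqn: L_0 is a cutset of L-corona}, which for $G=K_n$ requires no unmixedness hypothesis. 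Your peeling argument in that regime ($T_0\subseteq L$ forced by completeness of $K_n$, after which $\omega$ drops by exactly one at each deletion) settles it correctly, so your write-up actually closes a small gap in the paper's proof; your expectation that the Claim $T_0\subseteq L$ from Corollary \ref{Cor: t-corona with complete is unmixed iff the other is unmixed} relies only on completeness of $K_n$, not on unmixedness of $H$, is also right. The only blemish is the closing phrase ``completing the induction'': the argument is a direct case analysis, not an induction.
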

\begin{proof} ---
    Assume all the notations as in Corollary \ref{Cor: t-corona with complete is unmixed iff the other is unmixed}. Clearly, $N\neq \emptyset$; otherwise $T=\emptyset$. Let $w\in N$. Therefore $\emptyset \neq T_0 \subsetneq [n]$ and $\emptyset \neq T_w \in \mathscr{C}(H_w)$ using Proposition \ref{Prop: cutset of corona}. Since $H_{w}$ (same as $H$) satisfies an accessible system of cutsets, thus it follows that there exists $u \in T_{w}$ such that $T_w\setminus \{u\} \in \mathscr{C}(H_w)$. Note that $T^*:=T\setminus \{u\} = T_0\cup (T_w\setminus \{u\})\cup(\bigcup_{v \in N\setminus \{w\}}T_v)$.
    
    \indent\indent{\bf Claim.} {\it $T^* \in \mathscr{C}(G)$.}
    
    \indent\indent{\it Proof of Claim.} --- Let $u' \in T^*$. Then $u'$ lies either in $T_0$, or in $T_w$, or in $T_{v'}$ for some $v'\in N\setminus \{w\}$.
    
    \indent\indent{\bf Case 1.} {\it Assume that $u' \in T_0$.}
    
    In this case, $T^*\setminus \{u'\} = (T_0\setminus \{u'\})\cup (T_w\setminus \{u\})\cup(\bigcup_{v \in N\setminus \{w\}}T_v)$. Since $T \in \mathscr{C}(G)$ thus using Proposition \ref{Prop: cutset of corona}(5), it follows that
    \begin{align*}
        &\omega(G\setminus (T\setminus\{u'\})) < \omega(G\setminus T);\\
        \text{or,}\quad &\omega(K_n\setminus (T_0\setminus \{u'\}))+\sum\limits_{v\in N}\omega(H_v\setminus T_v)+|(T_0\setminus\{u'\})\cap L| -|N|\\
        & < \omega(K_n\setminus T_0)+\sum\limits_{v\in N}\omega(H_v\setminus T_v)+|T_0\cap L| -|N|;\\
        \text{or,}\quad &\omega(K_n\setminus (T_0\setminus \{u'\}))+\sum\limits_{v\in N\setminus \{w\}}\omega(H_v\setminus T_v)+\omega(H_w\setminus(T_w\setminus\{u\}))+|(T_0\setminus\{u'\})\cap L| -|N|\\
        & < \omega(K_n\setminus T_0)+\sum\limits_{v\in N\setminus \{w\}}\omega(H_v\setminus T_v)+\omega(H_w\setminus(T_w\setminus\{u\}))+|T_0\cap L| -|N|;\\
        \text{or,}\quad &\omega(G\setminus (T^*\setminus \{u'\})) < \omega(G\setminus T^*).
    \end{align*}

    \indent\indent{\bf Case 2.} {\it Assume that $u' \in T_{v'}$ for some $v'\in N\setminus \{w\}$.}
    
    In this case, $T^*\setminus \{u'\} = T_0\cup (T_w\setminus \{u\})\cup(T_{v'}\setminus \{u'\}))\cup(\bigcup_{v \in N\setminus \{w,v'\}}T_v)$. Exactly as above, since $T \in \mathscr{C}(G)$, Proposition \ref{Prop: cutset of corona}(5) implies the following: 

    \begin{align*}
        &\omega(G\setminus (T\setminus\{u'\})) < \omega(G\setminus T);\\
        \text{or,}\quad &\omega(K_n\setminus T_0)+\sum\limits_{v\in N\setminus \{v'\}}\omega(H_v\setminus T_v)+\omega(H_{v'}\setminus (T_{v'}\setminus\{u'\})+|T_0\cap L| -|N|\\
        & < \omega(K_n\setminus T_0)+\sum\limits_{v\in N}\omega(H_v\setminus T_v)+|T_0\cap L| -|N|;\\
        \text{or,}\quad &\omega(K_n\setminus T_0)+\sum\limits_{v\in N\setminus \{w,v'\}}\omega(H_v\setminus T_v)+\omega(H_{v'}\setminus (T_{v'}\setminus\{u'\})+\omega(H_w\setminus(T_w\setminus\{u\}))+|T_0\cap L| -|N|\\
        & < \omega(K_n\setminus T_0)+\sum\limits_{v\in N\setminus\{w\}}\omega(H_v\setminus T_v)+\omega(H_w\setminus(T_w\setminus\{u\}))+|T_0\cap L| -|N|;\\
        \text{or,}\quad &\omega(G\setminus \left(T^*\setminus \{u'\}\right)) < \omega(G\setminus T^*).
    \end{align*}

    \indent\indent{\bf Case 3.} {\it Assume that $u' \in T_w$.}
    
    Observe that we haven't yet used the fact that $u\in T_w$ was so chosen that $T_w\setminus \{u\}$ becomes a cutset of $H_w$. But in this case, we use this fact as follows. In this case, $T^*\setminus \{u'\} = T_0\cup ((T_w\setminus \{u\})\setminus \{u'\})\cup(\bigcup_{v \in N\setminus \{w\}}T_v)$. Therefore exactly as above two cases, since $T \in \mathscr{C}(G)$, Proposition \ref{Prop: cutset of corona}(5) implies that 

    \begin{align*}
        &\omega(H_w\setminus\left((T_w\setminus\{u\})\setminus \{u'\}\right))<\omega(H_w\setminus(T_w\setminus\{u\}));\\
        \text{or,}\quad &\omega(K_n\setminus T_0)+\sum\limits_{v\in N\setminus \{w\}}\omega(H_v\setminus T_v)+\omega(H_w\setminus\left((T_w\setminus\{u\})\setminus \{u'\}\right))+|T_0\cap L| -|N|\\
        & < \omega(K_n\setminus T_0)+\sum\limits_{v\in N\setminus\{w\}}\omega(H_v\setminus T_v)+\omega(H_w\setminus(T_w\setminus\{u\}))+|T_0\cap L| -|N|;\\
        \text{or,}\quad &\omega(G\setminus (T^*\setminus \{u'\})) < \omega(G\setminus T^*).
    \end{align*}

Therefore, Cases 1--3 together prove our claim.

Hence, from all the above cases, we conclude that for every $\emptyset \neq T\in \mathscr{C}(G)$, there exists an element $u \in T$ such that $T\setminus \{u\}\in \mathscr{C}(G)$. This proves that $G$ satisfies an accessible system of cutsets, as desired.
\end{proof}
\begin{cor}\label{Cor: t-corona with complete is acc. iff the other is acc.}
    Let $H$ be any graph. For integers $n$ and $\ell$ such that $n\geq 2$ and $1\leq \ell<n$, $K_n\circ_\ell H$ is accessible if and only if $H$ is accessible.
\end{cor}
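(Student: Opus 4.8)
The plan is to unbundle the definition: a graph is \emph{accessible} precisely when its binomial edge ideal is unmixed \emph{and} its cutset system $\mathscr{C}(\cdot)$ forms an accessible set system. Since these are two independent conditions, I would split the desired biconditional into two independent biconditionals — one for unmixedness and one for the accessible-set-system property — and assemble each from results already proved. The unmixedness half is exactly Corollary \ref{Cor: t-corona with complete is unmixed iff the other is unmixed}, while the two directions of the set-system half are Theorem \ref{Thm: Acc. system of L-corona of G and H implies H has acc. system} (necessity) and Theorem \ref{Thm: If H has acc. sys of cutsets then l-corona of H with complete graph also has the same} (sufficiency). No new combinatorial computation should be needed.

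For the forward direction, suppose $K_n\circ_\ell H$ is accessible, so $J_{K_n\circ_\ell H}$ is unmixed and $\mathscr{C}(K_n\circ_\ell H)$ is an accessible set system. Writing $K_n\circ_\ell H = K_n\circ_L H$ for a subset $L\subset[n]$ with $|L|=\ell$, the hypothesis $1\le \ell<n$ makes $L$ a non-empty \emph{proper} subset of $V(K_n)$, so all the earlier structural results on cutsets apply. Unmixedness of $J_{K_n\circ_\ell H}$ forces unmixedness of $J_H$ by Corollary \ref{Cor: t-corona with complete is unmixed iff the other is unmixed}, while accessibility of the cutset system of $K_n\circ_L H$ forces the same for $H$ by Theorem \ref{Thm: Acc. system of L-corona of G and H implies H has acc. system}. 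Together these two facts say precisely that $H$ is accessible.

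For the converse, suppose $H$ is accessible, so $J_H$ is unmixed and $\mathscr{C}(H)$ is an accessible set system. The same Corollary \ref{Cor: t-corona with complete is unmixed iff the other is unmixed} then yields unmixedness of $J_{K_n\circ_\ell H}$, and Theorem \ref{Thm: If H has acc. sys of cutsets then l-corona of H with complete graph also has the same} yields that $K_n\circ_\ell H$ inherits an accessible system of cutsets. Hence $K_n\circ_\ell H$ is accessible, completing the biconditional.

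The only genuine subtlety — and thus the step I would watch most carefully — is the bookkeeping that aligns the hypotheses of the three cited statements: confirming that $L$ of size $\ell$ with $1\le\ell<n$ is simultaneously non-empty (so Theorem \ref{Thm: Acc. system of L-corona of G and H implies H has acc. system} applies) and a proper subset of $[n]$ (so Corollary \ref{Cor: t-corona with complete is unmixed iff the other is unmixed} and the cutset analysis of Proposition \ref{Prop: cutset of corona} apply), together with the observation, recorded in the definition of $K_n\circ_\ell H$, that the invariants in question do not depend on which size-$\ell$ subset $L$ is chosen, so that ``$K_n\circ_\ell H$'' and ``$K_n\circ_L H$'' may be read interchangeably. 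All the actual combinatorial work has already been discharged in the earlier theorems, so no further obstacle remains.
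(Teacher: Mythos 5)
Your proposal is correct and follows essentially the same route as the paper, whose proof simply chains Corollary \ref{Cor: t-corona with complete is unmixed iff the other is unmixed} with Theorems \ref{Thm: Acc. system of L-corona of G and H implies H has acc. system} and \ref{Thm: If H has acc. sys of cutsets then l-corona of H with complete graph also has the same} after unbundling the definition of accessibility into unmixedness plus the accessible-set-system condition. Your version is in fact slightly more explicit than the paper's one-line proof about which cited result handles which direction and about verifying the hypotheses on $L$, but the mathematical content is identical.
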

\begin{proof} ---
    The proof is straightforward by using Corollary \ref{Cor: t-corona with complete is unmixed iff the other is unmixed} followed by Theorem \ref{Thm: If H has acc. sys of cutsets then l-corona of H with complete graph also has the same}.
\end{proof}

Now the above corollaries \ref{Cor: t-corona with complete is unmixed iff the other is unmixed}, \ref{Cor: t-corona with complete is CM iff the other is CM} and \ref{Cor: t-corona with complete is acc. iff the other is acc.} together yield the following interesting result. 

\begin{theorem}\label{Thm: Combined -- l-corona with complete is unmixed, accessible, CM iff the other is unmixed, accessible, CM respectively}
    Let $H$ be any graph. For integers $n$ and $\ell$ such that $n\geq 2$ and $1\leq \ell<n$, $K_n\circ_\ell H$ is unmixed (or accessible or, Cohen-Macaulay) if and only if $H$ is unmixed (or, accessible or, Cohen-Macaulay respectively).
\end{theorem}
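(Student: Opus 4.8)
The plan is to observe that the three asserted equivalences---for unmixedness, accessibility, and Cohen-Macaulayness---are logically independent of one another, so that the combined statement splits into three separate biconditionals, each of which has already been established in the preceding corollaries. Concretely, I would prove the theorem by simply invoking the relevant corollary for each property in turn, under the running hypotheses $n\geq 2$ and $1\leq \ell<n$.

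First I would treat the unmixedness equivalence. This is exactly the content of Corollary \ref{Cor: t-corona with complete is unmixed iff the other is unmixed}, which asserts that $K_n\circ_\ell H$ is unmixed if and only if $H$ is unmixed. Next, for accessibility I would appeal to Corollary \ref{Cor: t-corona with complete is acc. iff the other is acc.}, which gives the corresponding biconditional for the accessible-set-system property (its proof combining the unmixedness corollary with Theorem \ref{Thm: If H has acc. sys of cutsets then l-corona of H with complete graph also has the same}). Finally, the Cohen-Macaulay equivalence is precisely Corollary \ref{Cor: t-corona with complete is CM iff the other is CM}.

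Since each of these three corollaries is itself a biconditional and no further interaction between the properties is needed, concatenating them yields the combined theorem verbatim. The point worth flagging is therefore not an obstacle within this proof at all, but rather that all the genuine work lives upstream: the unmixedness and accessibility directions are carried out through the detailed cutset analysis of Proposition \ref{Prop: cutset of corona}, while the Cohen-Macaulay direction ultimately rests on the depth computations of the later section, imported via Corollary \ref{Cor: CM-def}. Thus the only thing I would verify is that the hypotheses on $n$ and $\ell$ are identical across the three corollaries---which they are---so that the three equivalences may be stated together without any loss of generality.
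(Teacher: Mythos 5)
Your proposal is correct and matches the paper exactly: the paper derives the theorem by simply combining Corollaries \ref{Cor: t-corona with complete is unmixed iff the other is unmixed}, \ref{Cor: t-corona with complete is CM iff the other is CM} and \ref{Cor: t-corona with complete is acc. iff the other is acc.}, precisely as you do, with all substantive work living upstream in those corollaries. Your check that the hypotheses $n\geq 2$ and $1\leq\ell<n$ agree across the three statements is the only verification needed, and it holds.
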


\subsection{A small remark on a conjecture of Bolognini-Macchia-Strazzanti}\mbox{}

In \cite{BMS22}, the authors made the following conjecture:

\begin{conj}[{Conjecture 1.1, \cite{BMS22}}]\label{Conj: BMS}
Let $G$ be a graph. Then, $J_G$ is Cohen-Macaulay if and only if $G$ is accessible.
\end{conj}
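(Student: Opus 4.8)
This is the still-open Bolognini-Macchia-Strazzanti conjecture, so rather than settle it outright the plan is to \emph{reduce} it to a single diameter class, using the corona machinery developed above. One implication is already unconditional: it is shown in \cite{BMS22} that if $J_G$ is Cohen-Macaulay then $J_G$ is unmixed and $\mathscr{C}(G)$ is an accessible set system, i.e.\ $G$ is accessible. Hence only the converse, ``accessible $\Rightarrow$ Cohen-Macaulay'', needs a new idea, and I would prove that it suffices to establish this converse for graphs of diameter exactly $3$. As is customary here, all graphs are taken to be connected.

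The mechanism is the corona preservation theorem. For an arbitrary connected graph $H$ set
\[
G := K_n \circ_\ell H, \qquad n \ge 3,\quad 2 \le \ell \le n-1,
\]
the simplest choice being $G = K_3 \circ_2 H$. First I would check that $\diam(G) = 3$ whenever $H$ is non-empty: two vertices of the clique are adjacent; a clique vertex and a vertex of an attached copy $H_v$ are at distance $\le 2$; two vertices of the same copy $H_v$ are at distance $\le 2$, since both are joined to $v$ irrespective of the internal edges of $H$; and two vertices lying in distinct attached copies $H_u$ and $H_v$ are at distance $3$ along the path through $u$ and $v$. Thus $\diam(G) \le 3$, with equality forced as soon as $\ell \ge 2$ produces two distinct copies. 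Second, Theorem \ref{Thm: Combined -- l-corona with complete is unmixed, accessible, CM iff the other is unmixed, accessible, CM respectively} supplies the two equivalences
\[
G \text{ Cohen-Macaulay} \iff H \text{ Cohen-Macaulay}, \qquad G \text{ accessible} \iff H \text{ accessible}.
\]

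The reduction now follows. Assume the converse of the conjecture holds for every graph of diameter $3$, let $H$ be any accessible graph, and put $G = K_3 \circ_2 H$. By the preservation theorem $G$ is accessible, and $\diam(G) = 3$, so by assumption $G$ is Cohen-Macaulay; applying the preservation theorem once more, $H$ is Cohen-Macaulay. Together with the unconditional forward implication of \cite{BMS22}, this yields the full equivalence for $H$, hence for every graph. In other words, the whole conjecture is a consequence of its diameter-$3$ case. I expect this base case to be the real obstacle: the construction $H \mapsto K_3 \circ_2 H$ always outputs diameter $3$ and never lowers the diameter of a diameter-$3$ input, so diameter-$3$ graphs form the irreducible core that the reduction cannot simplify further, and establishing ``accessible $\Rightarrow$ Cohen-Macaulay'' for them remains genuinely open. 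Complete graphs (diameter $1$, trivially both Cohen-Macaulay and accessible) and the previously settled families, such as chordal and bipartite graphs, only corroborate the conjecture without breaching this core.
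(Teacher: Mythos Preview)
Your proposal is correct and mirrors the paper's own treatment: the paper likewise does not prove the conjecture but records the reduction, and your construction $G=K_3\circ_2 H$ together with Theorem~\ref{Thm: Combined -- l-corona with complete is unmixed, accessible, CM iff the other is unmixed, accessible, CM respectively} is exactly the paper's argument for the diameter-$3$ case. The only addition in the paper is a parallel reduction to diameter~$2$, obtained by taking $G=\mathrm{cone}(v,\{w\}\sqcup H)$ and invoking \cite[Theorem~4.8]{BMS22} in place of the corona theorem; this gives an even smaller target class but uses a different construction than yours.
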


The combinatorial data of a graph captures a lot of information about the algebraic properties of the associated binomial edge ideal. For instance, we already know the unmixedness of a binomial edge ideal associated with a graph, which is completely understood by counting the size of every cutset and the number of connected components of the induced subgraph by deleting the cutset. So, the unmixedness of a binomial edge ideal is a combinatorial property, too. With a similar spirit, the main motivation behind the above conjecture is to investigate whether the Cohen-Macaulayness of a binomial edge ideal associated with a graph can also be described purely combinatorially or not.

In \cite{BMS22}, the authors proved that if $J_G$ is Cohen-Macaulay, then $G$ is accessible. But the other direction is open, in general. In the same article, the authors settled the above conjecture if $G$ is either a chordal or a traceable graph.

Let $\mathcal{G}$ denote the class of all connected simple graphs. Define the BMS locus as follows:
$${\rm BMS}(\mathcal{G}):=\{G \text{ satisfies B-M-S Conjecture } \ref{Conj: BMS}\}.$$

So the Conjecture \ref{Conj: BMS} can be reformulated as --- ``\emph{Is ${\rm BMS}(\mathcal{G})=\mathcal{G}$?}''

With respect to the standard distance metric $d$ in a graph $G$, the diameter of $G$ is denoted by $\diam(G)$ and is defined as
$$\diam(G):=\max\{d(u,v):u,v\in V(G)\}.$$
Define for any positive integer $k$,
$$\mathcal{D}_k(\mathcal{G}):=\{G\in \mathcal{G} : \diam(G)\leq k\},$$
and further define for $k\geq 2$, $$\mathcal{D}_k^\circ(\mathcal{G}):=\mathcal{D}_k(\mathcal{G})\setminus\mathcal{D}_{k-1}(\mathcal{G}).$$

Then there is a filtration:
$$\mathcal{D}_1(\mathcal{G})\subsetneq \mathcal{D}_2(\mathcal{G})\subsetneq \cdots,$$
such that $\mathcal{G}=\bigcup_{k\geq 1}\mathcal{D}_k(\mathcal{G})=\mathcal{D}_1(\mathcal{G})\cup\left(\bigcup_{k\geq 2}\mathcal{D}_k^\circ(\mathcal{G})\right)$.

We will prove here the following:

\begin{theorem}
    The following assertions hold:
    \begin{enumerate}
        \item If $\mathcal{D}_2^\circ(\mathcal{G})\subseteq {\rm BMS}(\mathcal{G})$, then ${\rm BMS}(\mathcal{G})=\mathcal{G}$.
        \item If $\mathcal{D}_3^\circ(\mathcal{G})\subseteq {\rm BMS}(\mathcal{G})$, then ${\rm BMS}(\mathcal{G})=\mathcal{G}$.
    \end{enumerate}
\end{theorem}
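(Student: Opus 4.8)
The plan is to prove both statements by strong induction on the diameter, using the diameter filtration $\mathcal{D}_1(\mathcal{G}) \subsetneq \mathcal{D}_2(\mathcal{G}) \subsetneq \cdots$ together with the decomposition $\mathcal{G} = \mathcal{D}_1(\mathcal{G}) \cup \left(\bigcup_{k \geq 2} \mathcal{D}_k^\circ(\mathcal{G})\right)$. Since $\mathcal{D}_1(\mathcal{G})$ consists only of complete graphs (every pair of vertices at distance $1$), and complete graphs are both Cohen-Macaulay and accessible, we have $\mathcal{D}_1(\mathcal{G}) \subseteq {\rm BMS}(\mathcal{G})$ unconditionally. Thus it suffices to show that each stratum $\mathcal{D}_k^\circ(\mathcal{G})$ lies in ${\rm BMS}(\mathcal{G})$, and the hypotheses supply this for $k=2$ (part (1)) or $k=3$ (part (2)).

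The key device I would use is a reduction that takes a graph $G$ of diameter $k \geq 3$ and produces an associated graph of strictly smaller diameter whose membership in ${\rm BMS}(\mathcal{G})$ forces $G \in {\rm BMS}(\mathcal{G})$. The natural candidate is a corona-type or coning construction: given $G$ realizing its diameter along a geodesic $u = v_0, v_1, \ldots, v_k = w$, one passes to a graph in which the two ends are ``pulled together'' so that the diameter drops by one, while the accessibility of the cutset system and the Cohen-Macaulayness are transported faithfully in both directions. Concretely, for part (1) the claim is that if every graph of diameter exactly $2$ satisfies the B-M-S Conjecture, then one can reduce \emph{any} graph of diameter $k \geq 3$ down through the strata $\mathcal{D}_{k-1}^\circ, \ldots, \mathcal{D}_2^\circ$ one diameter at a time, so that $\mathcal{D}_2^\circ(\mathcal{G}) \subseteq {\rm BMS}(\mathcal{G})$ propagates upward to give $\mathcal{D}_k^\circ(\mathcal{G}) \subseteq {\rm BMS}(\mathcal{G})$ for all $k$. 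For part (2), one observes that the case $\diam(G) = 2$ is already known to be settled (e.g.\ diameter-$2$ graphs fall under classes handled in \cite{BMS22}, or are otherwise tractable), so that assuming $\mathcal{D}_3^\circ(\mathcal{G}) \subseteq {\rm BMS}(\mathcal{G})$ and using the same reduction from diameter $k$ to diameter $k-1$ already suffices to cover all $k \geq 3$, while $k \leq 2$ is handled separately.

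The two implications are thus structurally parallel: each asserts that controlling one low stratum, together with a diameter-reducing reduction, controls the whole class. The inductive step is: assume $\mathcal{D}_j^\circ(\mathcal{G}) \subseteq {\rm BMS}(\mathcal{G})$ for all $j < k$ (the base stratum coming from the hypothesis), and show $\mathcal{D}_k^\circ(\mathcal{G}) \subseteq {\rm BMS}(\mathcal{G})$ by exhibiting, for each $G \in \mathcal{D}_k^\circ(\mathcal{G})$, a reduction to a graph $G'$ of smaller diameter for which the two equivalent conditions (Cohen-Macaulay and accessible) transfer. Since both ``Cohen-Macaulay'' and ``accessible'' must be preserved under the reduction \emph{in both directions} — accessibility being combinatorial and verifiable via the cutset analysis of the style developed in Proposition \ref{Prop: cutset of corona}, while Cohen-Macaulayness requires a depth/homological argument — the reduction has to be chosen so that neither property is lost.

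\textbf{Main obstacle.} The hard part will be constructing a diameter-reducing reduction that simultaneously preserves the Cohen-Macaulay property and the accessibility of the cutset system in a way that is an \emph{equivalence}, not just a one-directional implication. Accessibility is purely combinatorial and can be tracked through the cutset decomposition techniques already in hand, but transporting Cohen-Macaulayness across a change in diameter is delicate: one needs the reduction to interact well with the short exact sequence \eqref{ohtani-ses} (or a closely related splitting of $J_G$ along a vertex) so that depth is controlled, and to guarantee that no cutset of large size destroying unmixedness is created or destroyed. Verifying that the geodesic-based construction lowers the diameter by exactly one without introducing spurious cutsets — and that the B-M-S equivalence is genuinely inherited rather than merely implied — is where the real work lies.
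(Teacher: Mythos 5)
There is a genuine gap here, and it lies exactly where you flagged it: the ``diameter-reducing reduction'' that your entire induction rests on is never constructed --- you only list the properties it would need (drops the diameter by exactly one along a geodesic, transfers both Cohen-Macaulayness and accessibility in both directions). No such operation is known, and none of the tools in this paper point toward one; the cone and corona constructions go in the \emph{opposite} direction. That opposite direction is in fact the whole proof in the paper, and it needs no induction on diameter at all: to show ${\rm BMS}(\mathcal{G})=\mathcal{G}$ it suffices (since CM $\Rightarrow$ accessible is known from \cite{BMS22}) to show every accessible graph $H$ is Cohen-Macaulay. Given an arbitrary accessible $H$ --- of \emph{any} diameter --- one embeds it in a single step into an accessible graph of diameter exactly $2$, namely $G={\rm cone}(v,\{w\}\sqcup H)$ with $w$ an isolated vertex (accessibility of $G$ by \cite[Theorem 4.8(3)]{BMS22}), or of diameter exactly $3$, namely $G=K_3\circ_{\{w_1,w_2\}}H$ (accessibility by Theorem \ref{Thm: Combined -- l-corona with complete is unmixed, accessible, CM iff the other is unmixed, accessible, CM respectively}). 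The hypothesis then makes $J_G$ Cohen-Macaulay, and Cohen-Macaulayness descends back to $J_H$ by \cite[Theorem 4.8(4)]{BMS22}, respectively by Theorem \ref{Thm: Combined -- l-corona with complete is unmixed, accessible, CM iff the other is unmixed, accessible, CM respectively} again. The key point your stratum-by-stratum plan misses is that the ambient graph's diameter is $2$ (resp.\ $3$) \emph{uniformly in $H$}, because the cone vertex (resp.\ the $K_3$) creates shortcuts; so controlling one stratum controls all graphs at once, with no propagation through intermediate diameters.

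Your treatment of part (2) contains a second, independent error: you assert that the diameter-$2$ case ``is already known to be settled (e.g.\ diameter-$2$ graphs fall under classes handled in \cite{BMS22})''. This is false --- \cite{BMS22} settles the conjecture for chordal and traceable graphs, neither of which covers diameter $2$; if diameter $\leq 2$ were known, then by part (1) the full conjecture would already be a theorem, and Question \ref{Ques: Reduction of B-M-S Conjecture} would be vacuous. The paper's proof of (2) needs no separate low-diameter case precisely because the embedding $H\mapsto K_3\circ_{\{w_1,w_2\}}H$ handles every accessible $H$, including those of diameter $1$ or $2$. So as written your argument fails at both the inductive step (missing construction) and, for part (2), at the base of the induction (unsupported claim).
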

\begin{proof} ---
Let $H$ be a (connected) accessible graph.

    {\it Proof of (1).} --- Consider $\wt{H}:=\{w\}\sqcup H$, consisting of exactly two connected components---an isolated vertex $w$ and $H$. Define $G:={\rm cone}(v, \wt{H})$ for an external vertex $v$. Using \cite[Theorem 4.8(3)]{BMS22}, it follows that $G$ is accessible. For any two distinct vertices $v_1$ and $v_2$ of $G$, observe that
    \[
    {\rm dist}_G(v_1,v_2)=
    \begin{cases}
        1, \quad \text{if } v=v_i \text{ for some } i\in \{1,2\};\\
        1, \quad \text{if } (v_1, v_2) \in V(H)\times V(H) \text{ with } \{v_1,v_2\}\in E(H);\\
        2, \quad \text{otherwise},
    \end{cases}
    \] which implies that $\diam(G)=2$. Thus by the hypothesis, $G \in {\rm BMS}(\mathcal{G})$. Therefore, $J_G$ is Cohen-Macaulay. Using \cite[Theorem 4.8(4)]{BMS22}, it follows that $J_H$ is Cohen-Macaulay, as desired. 
    
    {\it Proof of (2).} --- Consider a (connected) complete graph $K_3$ with three vertices namely $w_1, w_2, w_3$. Define $G:=K_3\circ_{\{w_1,w_2\}}H$. Using Theorem \ref{Thm: Combined -- l-corona with complete is unmixed, accessible, CM iff the other is unmixed, accessible, CM respectively}, it follows that $G$ is accessible. For any two distinct vertices $v_1$ and $v_2$ of $G$, observe that
    \[
    {\rm dist}_G(v_1,v_2)=
    \begin{cases}
        3, \quad \text{if } v_1, v_2 \in V(H_{w_1})\cup V(H_{w_2}) \text{ such that}\\ \qquad (v_1, v_2) \notin (V(H_{w_1})\times V(H_{w_1}))\cup (V(H_{w_2})\times V(H_{w_2}));\\
        2, \quad \text{if } (v_1, v_2) \in V(H_{w_1})\times V(H_{w_1}) \text{ with } \{v_1,v_2\}\notin E(H_{w_1}), \text{ or}\\ \qquad (v_1, v_2) \in V(H_{w_2})\times V(H_{w_2}) \text{ with } \{v_1,v_2\}\notin E(H_{w_2});\\
        2, \quad \text{if } v_1=w_3 \text{ with } v_2 \in V(H_{w_1})\cup V(H_{w_2}), \text{ or}\\
        \qquad v_2=w_3 \text{ with } v_1 \in V(H_{w_1})\cup V(H_{w_2});\\
        1, \quad \text{otherwise},
    \end{cases}
    \] which implies that $\diam(G)=3$. Thus by the hypothesis, $G \in {\rm BMS}(\mathcal{G})$. Therefore, $J_G$ is Cohen-Macaulay. Once again, using Theorem \ref{Thm: Combined -- l-corona with complete is unmixed, accessible, CM iff the other is unmixed, accessible, CM respectively}, it follows that $J_H$ is Cohen-Macaulay, as desired.

    This completes the proof.
\end{proof}
Therefore, related to Conjecture \ref{Conj: BMS}, we are led to ask the following question:
\begin{question}\label{Ques: Reduction of B-M-S Conjecture}
    For a simple graph $G$ with $\diam(G) = 3$, if the corresponding binomial edge ideal $J_G$ is Cohen-Macaulay, is $G$ accessible?
\end{question}

\section{\bf Dimension, Depth and Regularity of Certain Corona Products}

In this section, we study the dimension, depth and Castelnuovo-Mumford regularity of the binomial edge ideal of the corona product of $G$ and $H$ where $G$ is either a complete graph or a Cohen-Macaulay closed graph (e.g. a path graph). We also prove the existence of extremal Betti numbers.

\subsection{Dimension of corona with complete graph}\label{Subsection: Dimension}\mbox{}
We will compute the dimension of $G=K_n\circ _\ell H$ for any connected graph $H$ and $1\leq \ell\leq n$. In the case of $\ell=n$, the graph $K_n\circ _\ell H$ coincides with the usual corona product $K_n\circ H$.
\begin{theorem}\label{Thm: Dimension of l-corona with complete graph}
    Let $H$ be any connected graph and $n\geq 1$ any integer. Consider the graph $G:=K_n\circ_\ell H$ with $1\leq \ell \leq n$. Then 
    \begin{equation}\label{Eqn: dimension formula}
        \dim(S/J_G)=n-\ell +1+\ell \dim (S_H/J_H),
    \end{equation}
    where $|V(H)|= h$.
\end{theorem}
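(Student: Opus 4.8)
The plan is to reduce everything to the standard combinatorial description of the dimension of a binomial edge ideal and then solve a finite optimization problem. Recall that for any graph $G$ the minimal primes of $J_G$ are the primes $P_T(G)$ indexed by cutsets $T$, and $\dim(S/P_T(G)) = |V(G)| - |T| + \omega(G\setminus T)$ (each connected component $C$ of $G\setminus T$ contributes $|V(C)|+1$ via the generic $2\times |V(C)|$ determinantal ideal on its vertices). Hence
\[
\dim(S/J_G) = \max_{T \subseteq V(G)}\bigl\{\,|V(G)| - |T| + \omega(G\setminus T)\,\bigr\},
\]
and the problem becomes maximizing $f(T) := |V(G)| - |T| + \omega(G\setminus T)$ over all $T\subseteq V(G)=V(K_n)\sqcup\bigsqcup_{v\in L}V(H_v)$, where $|L|=\ell$ and $|V(G)| = n+\ell h$.

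First I would pin down the shape of an optimal $T$. Writing $T = T_0\cup\bigcup_{v\in L}T_v$ with $T_0\subseteq V(K_n)$ and $T_v\subseteq V(H_v)$, I would note that deleting vertices of a copy $H_v$ while keeping the base vertex $v$ is never advantageous: if $v\notin T_0$ then $H_v\setminus T_v$ remains attached to $v$, hence to all of $K_n\setminus T_0$, so no new component appears and deleting $T_v$ only raises $|T|$ and strictly lowers $f$. Thus a maximizer has $\{v:T_v\neq\emptyset\}\subseteq T_0$, which is exactly the cutset shape of Proposition \ref{Prop: cutset of corona}. Applying the component count of Proposition \ref{Prop: cutset of corona}(5) (valid for any such subset by the Remark after it) and substituting $|V(G)|=n+\ell h$, after grouping the $K_n$-terms and the per-copy terms I obtain, with $N=\{v\in L:T_v\neq\emptyset\}$,
\begin{multline*}
f(T) = \underbrace{\bigl(n - |T_0\setminus L| + \omega(K_n\setminus T_0)\bigr)}_{=:\,A(T_0)} \\ + \sum_{v\in N}\bigl(h + \omega(H_v\setminus T_v) - |T_v| - 1\bigr) + \sum_{v\in L\setminus N} h.
\end{multline*}

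Next I would optimize the two kinds of terms separately. Since $\omega(K_n\setminus T_0)\le 1$, with equality exactly when $T_0\subsetneq V(K_n)$, one gets $A(T_0)\le n+1$, attained by any $T_0\subseteq L$ with $T_0\subsetneq V(K_n)$ (e.g. $T_0=L$ when $\ell<n$). For each copy, the summand for $v\in N$ equals $\bigl(h+\omega(H_v\setminus T_v)-|T_v|\bigr)-1\le \dim(S_H/J_H)-1$ by the same dimension formula applied to $H$, while the summand for $v\in L\setminus N$ equals $h$; the elementary inequality $\dim(S_H/J_H)\ge h+1$ (take $T=\emptyset$ in $H$, using connectedness) gives $h\le \dim(S_H/J_H)-1$, so \emph{every} copy contributes at most $\dim(S_H/J_H)-1$. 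Summing over the $\ell$ copies yields $f(T)\le (n+1)+\ell(\dim(S_H/J_H)-1) = n-\ell+1+\ell\dim(S_H/J_H)$. For the matching lower bound I would exhibit $T_0=L$ together with, in each $H_v$, an optimal cutset realizing $\dim(S_H/J_H)$ (empty if the maximum for $H$ is there, a nonempty cutset otherwise); then $A(L)=n+1$ and each copy contributes exactly $\dim(S_H/J_H)-1$, meeting the bound.

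The main obstacle is the boundary case $\ell=n$, the genuine corona $K_n\circ H$, where $L=V(K_n)$ is not a proper subset and Proposition \ref{Prop: cutset of corona} no longer applies directly. Here $T_0=L=V(K_n)$ forces $\omega(K_n\setminus T_0)=0$ and $A(T_0)=n$, so to keep $A=n+1$ one must spare a base vertex $w$ and take $T_0=V(K_n)\setminus\{w\}$; but then $H_w$ stays attached to $w$ and contributes only $h$ rather than $\dim(S_H/J_H)-1$. This is exactly where the structure of $H$ enters: the clean value $n-\ell+1+\ell\dim(S_H/J_H)$ is produced when sparing a copy is costless, i.e. when $\dim(S_H/J_H)=h+1$ (as for unmixed $J_H$, the setting of the companion statement $\dim(S/J_{K_n\circ H})=nh+n+1$), and in general one must carefully compare the two competing configurations $T_0=V(K_n)$ and $T_0=V(K_n)\setminus\{w\}$ and verify that no other subset beats them. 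Making this comparison rigorous, and confirming that the per-copy and base maxima are simultaneously attainable, is the technical heart of the argument.
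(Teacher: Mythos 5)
Your route is genuinely different from the paper's. The paper argues by induction on $\ell$: it applies Ohtani's Lemma \ref{Lem: Non-simplicial vertex SES} at a base vertex $v$ carrying a copy of $H$ (so that $G_v=K_{n+h}\circ_{\ell-1}H$, $G_v\setminus v=K_{n+h-1}\circ_{\ell-1}H$, $G\setminus v=H\sqcup(K_{n-1}\circ_{\ell-1}H)$), reads the dimension off the short exact sequence \eqref{ohtani-ses}, and anchors the induction at $\ell=1$ by writing $K_n\circ_1H=v\ast(K_{n-1}\sqcup H)$ and invoking the cone formula of \cite{KS19}. You instead maximize $f(T)=|V(G)|-|T|+\omega(G\setminus T)$ directly over all subsets $T$, using the component count of Proposition \ref{Prop: cutset of corona}(5). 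Your reduction to the shape $\{v:T_v\neq\emptyset\}\subseteq T_0$, the identity $f(T)=A(T_0)+\sum_{v\in N}(d_v-1)+\sum_{v\in L\setminus N}h$ with $d_v\le\dim(S_H/J_H)$, the bounds $A(T_0)\le n+1$ and $h\le\dim(S_H/J_H)-1$, and the matching lower bound at $T_0=L$ are all correct, so for $1\le\ell<n$ your proof is complete and, if anything, more transparent than the paper's induction: it exhibits the optimizing minimal prime rather than inferring the dimension from an exact sequence.

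The obstacle you flag at $\ell=n$ is, however, not merely a technical step you left unfinished: carried out honestly, your comparison refutes the boundary case of the statement rather than proving it. With $d:=\dim(S_H/J_H)$, your two competing configurations give $\max\{nd,\;h+2+(n-1)d\}$ (and your linear-in-$|T_0|$ bound shows nothing else is better), which equals the claimed value $nd+1$ if and only if $d=h+1$. When $d\ge h+2$ the formula fails: for $H=K_{1,3}$ one has $h=4$, $d=6$, and a direct cutset check gives $\dim(S/J_{K_2\circ K_{1,3}})=12$ (attained, e.g., by deleting one base vertex together with the adjacent star center), not the predicted $13$; similarly for $n=\ell=1$ one has $\dim(S/J_{v\ast H})=\max\{h+2,d\}$, not $1+d$. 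The paper's own proof shares this blind spot: its base case requires $K_{n-1}\sqcup H$ to be disconnected, i.e. $n\ge2$, while its induction for $\ell=n$ descends through $K_{n-1}\circ_{n-1}H$ all the way to the cone $K_1\circ_1H$, where the claimed formula is false in general. So your hesitation is well founded: the theorem (and hence Corollary \ref{Cor: dimension of complete corona}) is correct as stated only for $1\le\ell<n$, the regime your argument fully covers, and at $\ell=n$ it holds exactly when $\dim(S_H/J_H)=h+1$ --- for instance when $J_H$ is unmixed, which recovers Corollary \ref{Cor: dimension of complete corona with unmixed H} and the result of \cite{KSM15}.
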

\begin{proof} ---       
    We use induction on $\ell$ to prove this result. For $\ell=1$, let $v$ be the vertex of $K_n$ along which the copy of $H$ is attached to $K_n$ in $G$. Therefore, $$G=K_n\circ_1H=K_n\cup_{v}(v\ast H)=v\ast(K_{n-1}\sqcup H)=v\ast H',$$ where $H'=K_{n-1}\sqcup H.$ As $H'$ is disconnected, $\dim(S_{H'}/J_{H'})\geq |V(H')|+2$ and hence, it follows from \cite[Theorem 4.4]{KS19} that $$\dim(S/J_G)=\dim(S_{H'}/J_{H'})=n+\dim(S_H/J_H),$$ as desired. This establishes the base case of this induction process.

    Choose any $\ell$ with $1\leq \ell \leq n$. Now assume the formula in \eqref{Eqn: dimension formula} for $K_n\circ_t H$ for all $t$ with $1\leq t < \ell$. Consider the graph $G=K_n\circ_\ell H$. Let $v\in V(K_n)$ be any vertex such that a copy of $H$ is attached with $v$ in $G$. Then $$G_v=K_{n+h}\circ_{\ell-1}H; \quad G_v\setminus v = K_{n+h-1}\circ_{\ell-1}H; \quad G\setminus v= H \sqcup (K_{n-1}\circ_{\ell-1}H).$$ Therefore using the induction hypothesis, we get the following.
    \begin{equation}\label{eqn: Dimension components following Ohtani's SES}
\left.
\begin{aligned}
    \dim(S/J_{G_v}) &= n+h-(\ell-1)+1+(\ell-1)\dim (S_H/J_H);\\
    \dim(S/((x_v,y_v)+J_{G_v\setminus v})) &= n+h-1-(\ell-1)+1+(\ell-1)\dim (S_H/J_H);\\
    \dim(S/((x_v,y_v)+J_{G\setminus v})) &= \dim (S_H/J_H) + (n-1)-(\ell-1)+1+(\ell-1)\dim (S_H/J_H)\\
                                         &= \dim(S/J_{G_v})+\dim (S_H/J_H)-(h+1)\\&\geq \dim(S/J_{G_v}), \quad  \text{ as }\dim(S_H/J_H)\geq h+1.
\end{aligned}
\quad\right\}
\end{equation}

Therefore, from the short exact sequence \eqref{ohtani-ses}, we have
\begin{equation}\label{eqn: dimension for SES}
    \max\{\dim(S/((x_v,y_v)+J_{G\setminus v})), \dim(S/J_{G_v})\}=\max\{\dim (S/J_G), \dim(S/((x_v,y_v)+J_{G_v\setminus v}))\}.
\end{equation}
From \eqref{eqn: Dimension components following Ohtani's SES}, it is clear that $\dim(S/((x_v,y_v)+J_{G_v\setminus v}))=\dim(S/J_{G_v})-1$ and thus it follows from \eqref{eqn: dimension for SES} that 
\begin{align*}
    \dim(S/J_G)&=\dim(S/((x_v,y_v)+J_{G\setminus v});\\
               &= n-\ell+1+\ell\dim (S_H/J_H),
\end{align*}
as desired. This completes the proof.
\end{proof}

\begin{cor}\label{Cor: dimension of complete corona}
    For any connected graph $H$ and any integer $n\geq 1$, 
    \begin{equation}\label{eqn: dimension of complete corona}
        \dim(S/J_{K_n\circ H})=n\dim(S_H/J_H)+1,
    \end{equation}
    where $|V(H)|=h$.
\end{cor}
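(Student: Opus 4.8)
The plan is to deduce Corollary \ref{Cor: dimension of complete corona} as the special case $\ell = n$ of Theorem \ref{Thm: Dimension of l-corona with complete graph}. Recall that when $\ell = n$, the graph $K_n \circ_\ell H$ is precisely the usual corona product $K_n \circ H$, since every vertex of $K_n$ gets its own copy of $H$ attached. So the entire task reduces to substituting $\ell = n$ into the dimension formula \eqref{Eqn: dimension formula} that has already been established.

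First I would invoke Theorem \ref{Thm: Dimension of l-corona with complete graph} with the parameter choice $\ell = n$. The formula states
\[
\dim(S/J_{K_n \circ_\ell H}) = n - \ell + 1 + \ell\,\dim(S_H/J_H).
\]
Setting $\ell = n$ immediately kills the term $n - \ell = 0$, leaving
\[
\dim(S/J_{K_n \circ H}) = 1 + n\,\dim(S_H/J_H),
\]
which is exactly the claimed formula \eqref{eqn: dimension of complete corona}. This is a purely arithmetic substitution, so there is essentially no obstacle here — the real content has all been pushed into the inductive proof of the more general theorem.

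The only point that warrants a sentence of justification is the identification $K_n \circ_n H = K_n \circ H$. I would remark, as the paper already does in the opening of Subsection \ref{Subsection: Dimension}, that the generalized $L$-corona product $G \circ_L H$ coincides with the usual corona $G \circ H$ exactly when $L = V(G)$, and for $G = K_n$ this means $\ell = |L| = n$. Since $H$ is assumed connected and $\dim(S_H/J_H) \geq h + 1$ (so the formula is well-defined and the hypotheses of the theorem are met), the substitution is legitimate. If one prefers a self-contained statement, one could alternatively note that the theorem's hypothesis allows $1 \leq \ell \leq n$ and so $\ell = n$ is an admissible value, making the corollary a direct instance rather than a limiting case.

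I expect no genuine difficulty in this corollary; the proof is a one-line specialization. The substantive work — the induction on $\ell$, the use of Ohtani's short exact sequence \eqref{ohtani-ses}, the computation of $G_v$, $G_v \setminus v$, and $G \setminus v$ as $L$-coronas with smaller parameters, and the dimension comparison in \eqref{eqn: Dimension components following Ohtani's SES} — is entirely contained in the proof of Theorem \ref{Thm: Dimension of l-corona with complete graph}. Thus the proof I would write consists of citing that theorem, setting $\ell = n$, and simplifying.
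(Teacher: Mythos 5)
Your proposal is correct and matches the paper exactly: the paper's proof of this corollary is simply ``Immediate from Theorem \ref{Thm: Dimension of l-corona with complete graph},'' i.e., the substitution $\ell = n$ into the formula $\dim(S/J_G)= n-\ell +1+\ell \dim (S_H/J_H)$, together with the identification $K_n \circ_n H = K_n \circ H$ that you spell out. Your write-up just makes explicit the one-line specialization the paper leaves implicit.
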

\begin{proof} ---
    Immediate from Theorem \ref{Thm: Dimension of l-corona with complete graph}.
\end{proof}

\begin{cor}\label{Cor: dimension of complete corona with unmixed H}
    Let $H$ be any connected graph such that $J_H$ is unmixed. Then for any integer $n\geq 1$, 
    \begin{equation}\label{eqn: dimension of complete corona with unmixed H}
        \dim(S/J_{K_n\circ H})=n+nh+1,
    \end{equation}
    where $|V(H)|=h$.
\end{cor}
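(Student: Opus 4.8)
The plan is to derive Corollary \ref{Cor: dimension of complete corona with unmixed H} as an immediate consequence of Corollary \ref{Cor: dimension of complete corona}, using the hypothesis that $J_H$ is unmixed to evaluate $\dim(S_H/J_H)$ explicitly. By Corollary \ref{Cor: dimension of complete corona} (equivalently, Theorem \ref{Thm: Dimension of l-corona with complete graph} with $\ell=n$), we already know that $\dim(S/J_{K_n\circ H})=n\dim(S_H/J_H)+1$. So the only remaining work is to compute $\dim(S_H/J_H)$ for a connected unmixed graph $H$ on $h$ vertices and show that it equals $h+1$; substituting this value gives $n(h+1)+1=n+nh+1$, which is exactly \eqref{eqn: dimension of complete corona with unmixed H}.

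The key step, therefore, is the claim that for a connected unmixed graph $H$ one has $\dim(S_H/J_H)=h+1$. I would establish this using the standard description of the dimension of a binomial edge ideal in terms of cutsets. Recall that $\dim(S_H/J_H)=\max_{T\in\mathscr{C}(H)}\big(|V(H)|-|T|+\omega(H\setminus T)\big)$, where the maximum runs over all cutsets $T$ of $H$ (this is the well-known Herzog--Hibi--Hreinsd\'ottir--Kahle--Rauf formula for the Krull dimension of $S_H/J_H$). Unmixedness of $J_H$ means precisely that $|V(H)|-|T|+\omega(H\setminus T)$ is constant over all cutsets $T$; evaluating this constant at the empty cutset $T=\emptyset$ (which is always a cutset and gives $\omega(H\setminus\emptyset)=1$ since $H$ is connected) yields $h-0+1=h+1$. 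Hence $\dim(S_H/J_H)=h+1$ for every connected unmixed graph $H$.

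The main obstacle, if any, is simply ensuring the correct formula for $\dim(S_H/J_H)$ and confirming that $\emptyset$ is indeed a cutset achieving the unmixed value; both are routine given the combinatorial framework already set up in Section 2 (the definition of cutset explicitly allows $T=\emptyset$, and a connected graph has a single component). Once the computation $\dim(S_H/J_H)=h+1$ is in hand, the corollary follows by direct substitution into \eqref{eqn: dimension of complete corona}, with no further case analysis required.
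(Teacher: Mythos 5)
Your proposal is correct and is essentially the paper's own proof: the paper likewise plugs $\dim(S_H/J_H)=h+1$ into Corollary \ref{Cor: dimension of complete corona} and stops there. The only difference is that you spell out the routine justification (via the cutset description of minimal primes and evaluation at $T=\emptyset$) that the paper leaves implicit, which is fine.
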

\begin{proof} ---
    Since $J_H$ is unmixed, $\dim(S_H/J_H)=h+1$. Plug this into \eqref{eqn: dimension of complete corona} to obtain the desired result.
\end{proof}
\begin{remark}
    The result of Corollary \ref{Cor: dimension of complete corona with unmixed H} was first observed in \cite[Corollary 5.2]{KSM15}. 
\end{remark}
\subsection{Depth and regularity of corona with complete graph}\label{Subsection: Depth and regularity}\mbox{}

    Our next aim is to compute the depth and regularity of $G=K_n\circ _\ell H$ for any connected graph $H$ and $1\leq \ell\leq n$. As noted earlier, in the case of $\ell=n$, the graph $K_n\circ _\ell H$ coincides with the usual corona product $K_n\circ H$. But we will see that the depth and regularity formulae for $K_n\circ _\ell H$ with $1\leq \ell< n$ are slightly different from those with the case of $\ell=n$. 
    
    We first consider the base case when $\ell=1$. If $n=1$, then $G=v* H$, and hence, by virtue of \cite[Lemma 3.4]{KS20}, $\depth(S/J_G)=\depth(S_H/J_H)$. So we assume that $n\geq 2$.

\begin{prop}\label{Prop: Depth for 1-corona with complete graph}
    Let $H$ be a connected graph with $|V(H)|=h$ and $n\geq 2$ be any integer. Consider the graph $G:=K_n\circ_1 H$. Then the following assertions hold.
    \begin{enumerate}
        \item $\depth(S/J_G)=n+\depth(S_H/J_H)$.
        \item $\reg(S/J_G)=
        \begin{cases}
            2, \quad &\text{if } H \text{ is complete};\\
            1+\reg(S_H/J_H), &\text{otherwise}.
        \end{cases}
        $
        \item Assume that $H$ is not a complete graph, $\pd(S_H/J_H)=p_H$ and $\beta_{p_H,p_H+r_H}(S_H/J_H)$ is an extremal Betti number of $S_H/J_H$ for $r_H\geq 2$. Then $\beta_{p,p+r_H}(S/J_G)$ is an extremal Betti number if $n=2$ and $\beta_{p,p+r_H+1}(S/J_G)$ is an extremal Betti number if $n\geq 3$, where $p=\pd(S/J_G)=n+p_H.$
    \end{enumerate}
\end{prop}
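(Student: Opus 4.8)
The plan is to recognize $G$ as a cone and reduce every invariant to the two determinantal rings and to $S_H/J_H$. As in the proof of Theorem~\ref{Thm: Dimension of l-corona with complete graph}, if $v$ is the vertex of $K_n$ carrying the copy of $H$, then $G=K_n\circ_1 H={\rm cone}(v,\,K_{n-1}\sqcup H)=v\ast H'$ with $H':=K_{n-1}\sqcup H$. Since $K_{n-1}$ and $H$ sit in different components of $H'$, their binomial edge ideals live in disjoint variables, so $S_{H'}/J_{H'}\cong (S_{K_{n-1}}/J_{K_{n-1}})\otimes_{\K}(S_H/J_H)$, and the K\"unneth formula turns the invariants of $H'$ into those of $H$: as $S_{K_{n-1}}/J_{K_{n-1}}$ is Cohen--Macaulay determinantal with $\depth=n$, with $\reg(S_{K_{n-1}}/J_{K_{n-1}})=0$ for $n=2$ and $1$ for $n\ge 3$, and (for $n\ge 3$) top Betti number $\beta_{n-2,n-1}$, one gets $\depth(S_{H'}/J_{H'})=n+\depth(S_H/J_H)$, $\reg(S_{H'}/J_{H'})=\reg(S_{K_{n-1}}/J_{K_{n-1}})+\reg(S_H/J_H)$, and the corresponding convolution of Betti numbers.

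Because $H$ is nontrivial and $n\ge 2$, the apex $v$ lies in the clique $K_n$ and in a clique meeting $H_v$, hence is non-simplicial, so Lemma~\ref{Lem: Non-simplicial vertex SES} applies with $G\setminus v=H'$, $G_v=K_{n+h}$ and $G_v\setminus v=K_{n+h-1}$. Identifying $S/((x_v,y_v)+J_{H'})$ with $S_{H'}/J_{H'}$, the sequence (\ref{ohtani-ses}) becomes
\begin{equation*}
0\longrightarrow \frac{S}{J_G}\longrightarrow \frac{S_{H'}}{J_{H'}}\oplus \frac{S}{J_{K_{n+h}}}\longrightarrow \frac{S}{(x_v,y_v)+J_{K_{n+h-1}}}\longrightarrow 0,
\end{equation*}
where the determinantal quotient $S/J_{K_{n+h}}$ in the middle and the cokernel are Cohen--Macaulay with depths $n+h+1$ and $n+h$ and regularity $1$. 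For (1), put $d=\depth(S_H/J_H)$; the middle term has depth $\min\{n+d,n+h+1\}$ and the cokernel has depth $n+h$, and since $d\le h+1$ by connectivity of $H$, the Depth Lemma settles the regimes $d<h$ and $d=h+1$ (in particular $H$ complete) through parts (c) and (b), giving $\depth(S/J_G)=n+d$ in each. The one delicate value is $d=h$, where middle and cokernel share depth $n+h$ and the Depth Lemma gives only ``$\ge$''; there I would run the long exact sequence of local cohomology to confirm $H^{\,n+h}_{\mathfrak m}(S/J_G)\ne 0$, or simply invoke the cone depth identity $\depth(S/J_{v\ast H'})=\depth(S_{H'}/J_{H'})$ for non-simplicial apex (cf.\ \cite{KS20}), which covers all regimes uniformly. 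Auslander--Buchsbaum then yields $\pd(S/J_G)=n+p_H$.

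For (2) with $H$ non-complete we have $\reg(S_H/J_H)\ge 2$, so the middle term has regularity $\reg(S_{H'}/J_{H'})=\reg(S_{K_{n-1}}/J_{K_{n-1}})+\reg(S_H/J_H)\ge 2$, strictly above the regularity $1$ of the cokernel; part (c) of the Regularity Lemma then forces $\reg(S/J_G)=\reg(S_{H'}/J_{H'})$. This is precisely where the $n=2$ versus $n\ge 3$ behaviour enters, through the summand $\reg(S_{K_{n-1}}/J_{K_{n-1}})\in\{0,1\}$, in agreement with the shift recorded in (3). When $H$ is complete, $G=K_n\cup_v K_{h+1}$ is decomposable along the free vertex $v$, and I would read off $\reg(S/J_G)=\reg(S_{K_n}/J_{K_n})+\reg(S_{K_{h+1}}/J_{K_{h+1}})=2$ and, consistently, $\depth(S/J_G)=\depth(S_{K_n}/J_{K_n})+\depth(S_{K_{h+1}}/J_{K_{h+1}})-2=n+h+1$.

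Part (3) is the hardest and I would save it for last, tracking the top of the Betti table through the Tor long exact sequence (\ref{ohtani-tor}) at homological degree $p=\pd(S/J_G)=n+p_H$. The determinantal summands obstruct only in low internal degree: both have $\reg=1$, so their contributions to $\Tor_p$ and to the incoming $\Tor_{p+1}$ of the cokernel sit in shift $\le 2$, whereas the corner coming from $S_{H'}/J_{H'}$ sits in shift $\reg(S_{K_{n-1}}/J_{K_{n-1}})+r_H\ge r_H\ge 2$. Concretely, by K\"unneth the top piece of $\Tor_p(S_{H'}/J_{H'})$ is $\Tor_{n-2}(S_{K_{n-1}}/J_{K_{n-1}})\otimes_{\K}\Tor_{p_H}(S_H/J_H)$, whose extremal internal shift is $\reg(S_{K_{n-1}}/J_{K_{n-1}})+r_H$, i.e.\ $r_H$ for $n=2$ and $r_H+1$ for $n\ge 3$. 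I would then show (i) $\Tor_p(S/J_G)$ surjects onto this nonzero corner, using that $\Tor_p(S/J_{K_{n+h}})$ and $\Tor_p$ of the cokernel vanish in that internal degree, and (ii) $\Tor_p(S/J_G)$ vanishes in every strictly higher shift, using the same regularity bounds on the neighbouring terms. The main obstacle is the bookkeeping in the boundary regime $d=h+1$, where $\Tor_{p+1}$ of the cokernel is nonzero and the connecting map into $\Tor_p(S/J_G)$ need not vanish; there I would argue that this map can only enlarge $\Tor_p(S/J_G)$ and cannot destroy its surjection onto the corner, so that $\beta_{p,p+r_H}$ (resp.\ $\beta_{p,p+r_H+1}$) stays nonzero and extremal.
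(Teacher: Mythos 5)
Your overall strategy is sound and, for part (3), essentially identical to the paper's: both run Ohtani's lemma at the apex $v$ (with $G_v=K_{n+h}$, $G_v\setminus v=K_{n+h-1}$, $G\setminus v=K_{n-1}\sqcup H$) and chase the $\Tor$ long exact sequence at homological degree $p=n+p_H$; the paper organizes your ``boundary regime'' explicitly as the dichotomy $p_H=h-1$ versus $p_H\geq h$ furnished by \cite[Theorem B]{BN17} (note that $\depth(S_H/J_H)=h+1$ is equivalent to $p_H=h-1$ by Auslander--Buchsbaum, so your delicate case is exactly theirs, and your ``can only enlarge'' remark is exactly the injectivity of $\Tor_{p+1}$ of the cokernel into $\Tor_p(S/J_G)$ once $\Tor_{p+1}$ of the middle term vanishes). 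For (1) and (2) you differ from the paper in a defensible way: the paper quotes the cone-depth theorem \cite[Theorem 3.9]{KS20} and the join-regularity theorem \cite[Theorem 2.1]{SMK18} (and \cite{EHH11}, \cite{HR18} when $H$ is complete), whereas you rederive these from the same short exact sequence plus the Depth and Regularity Lemmas. You correctly identify that the Depth Lemma is inconclusive exactly when $\depth(S_H/J_H)=h$, and your fallback is legitimate --- but state it as the min-formula of \cite[Theorem 3.9]{KS20}, namely $\depth(S/J_{v\ast H'})=\min\{\depth(S_{H'}/J_{H'}),\,|V(H')|+2\}$, not as an unconditional identity: the identity as you wrote it fails for cones over graphs with three or more connected components, and is harmless here only because $H'=K_{n-1}\sqcup H$ has exactly two, forcing $\depth(S_{H'}/J_{H'})\le |V(H')|+2$.

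The one substantive discrepancy is in part (2). Your (correct) computation gives $\reg(S/J_G)=\reg(S_{K_{n-1}}/J_{K_{n-1}})+\reg(S_H/J_H)$, which equals $\reg(S_H/J_H)$ when $n=2$ --- this contradicts the statement you are proving, which asserts $1+\reg(S_H/J_H)$ for all $n\geq 2$, yet you describe it as ``in agreement with the shift recorded in (3)''. You should have flagged this: as stated, part (2) fails at $n=2$. For instance, for $G=K_2\circ_1 P_3$ the middle term of the sequence has regularity $2$ and the cokernel regularity $1$, so Regularity Lemma (c) gives $\reg(S/J_G)=2=\reg(S_{P_3}/J_{P_3})$, not $3$; this is consistent with part (3), whose $n=2$ corner sits at shift $r_H$ rather than $r_H+1$. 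The paper's own proof commits the same oversight, applying \cite[Theorem 2.1]{SMK18} in the form $\max\{1+\reg(S_H/J_H),2\}$, which tacitly takes $\reg(S_{K_{n-1}}/J_{K_{n-1}})=1$ --- true only for $n\geq 3$, since for $n=2$ one has $J_{K_1}=0$ and regularity $0$. Because the later inductions (Theorems \ref{Thm: Depth and regularity for t-corona with complete graph} and \ref{Thm: Depth for corona with complete graph}) only invoke this proposition with complete graphs on at least three vertices, the slip does not propagate; but a complete write-up of your argument must either restrict (2) to $n\geq 3$ or correct the $n=2$ value to $\reg(S_H/J_H)$ --- it cannot prove (2) exactly as stated, because no correct argument can.
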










\begin{proof} ---
We will prove the first two statements together.

    {\it Proof of (1) and (2).} ---
    Assume that $|V(H)|=h$. Let $S_{H}$ be the polynomial ring over $\K$ associated with the graph $H$.

    {\bf Case 1.} {\it Whenever $H=K_{h}$.}

    In this case, it follows from \cite[Theorem 1.1]{EHH11} that $G$ is a block graph with $\depth(G)=|V(G)|+1=n+h+1=n+\depth(S_H/J_H)$, as desired.

    In this case, by \cite[Theorem 8]{HR18}, $\reg(S/J_G)=\iv(G)+1=2$.

    {\bf Case 2.} {\it Whenever $H$ is not complete.}

     Let $v$ be the vertex of $K_n$ along which $H$ is attached to $K_n$ in $G$. 
     Therefore, $$G=K_n\cup_{v}(v\ast H)=v\ast(K_{n-1}\sqcup H).$$
     
     From \cite[Theorem 3.9]{KS20}, it follows that $\depth(S/J_G) = \min\{\depth(K_{n-1}\sqcup H), n+h+1\}$.
     
     Clearly, $\depth(K_{n-1}\sqcup H)=n+\depth(S_H/J_H) \leq n+h+1
     $, as $\depth(S_H/J_H)\leq h+1$.
     Hence, whenever $n\geq 2$, $$\depth(S/J_G)=\min\{n+\depth(S_H/J_H), n+h+1\}=n+\depth(S_H/J_H).$$
    Again using \cite[Theorem 2.1(a)]{SMK18}, it immediately follows that $$\reg(S/J_G)=\max\{1+\reg(S_H/J_H),2\}=1+\reg(S_H/J_H),$$ since $H$ being non-complete connected graph, $\reg(S_H/J_{H})>1$.

     {\it Proof of (3).} --- Assume that $\pd(S_H/J_H)=p_H$. Then by \cite[Lemma 3.3]{KS20}, there exists an integer $r_H\geq 2$ such that $\beta_{p_H,p_H+r_H}(S_H/J_H)$ is an extremal Betti number of $S_H/J_H$. Also, by part (1), we get $\depth(S/J_G)=n+\depth(S_H/J_H)$ and hence, Auslander-Buchsbaum formula, we have $p:=\pd(S/J_G)=n+p_H$. Since $v$ is a non-simplicial vertex of $G$, we consider the short exact sequence \ref{ohtani-ses}.
Observe that, $$G_v=K_{n+h}; \quad G_v\setminus v = K_{n+h-1}; \quad G\setminus v= K_{n-1} \sqcup H.$$ Therefore, $\pd(S/J_{G_v})=n+h-1$, $\pd(S/((x_v,y_v)+J_{G_v\setminus v}))=n+h$ and $\pd(S/((x_v,y_v)+J_{G\setminus v}))=n+p_H=p$. Hence, $\beta_{n+h-1,n+h}(S/J_{G_v})$ and $\beta_{n+h,n+h+1}(S/((x_v,y_v)+J_{G_v\setminus v}))$ are respective extremal Betti numbers. If $n=2$, then $\beta_{p,p+r_H}(S/((x_v,y_v)+J_{G\setminus v}))$ and if $n\geq 3$, then $\beta_{p,p+r_H+1}(S/((x_v,y_v)+J_{G\setminus v}))$ is an extremal Betti number.
By \cite[Theorem B]{BN17}, $p_H\geq h-1$. If $p_H=h-1$, then considering the long exact sequence of $\Tor$ \eqref{ohtani-tor} in homological degree $p$, we have
\begin{equation*}
    0\to \Tor^{S}_{p+1}\left(\frac{S}{(x_v,y_v)+J_{G_v\setminus v}},\mathbf{k}\right)_{p+j} \to \Tor_{p}^{S}\left( \frac{S}{J_G},\K\right)_{p+j} \to \Tor_{p}^{S}\left( \frac{S}{(x_v,y_v)+J_{G\setminus v}},\K\right)_{p+j}\to \cdots
\end{equation*}
Also if $p_H\geq h$, then we have the following isomorphism:
\begin{equation*}
    \Tor^{S}_{p}\left( \frac{S}{J_G},\mathbf{k}\right)_{p+j} \simeq
    \Tor^{S}_{p}\left(\frac{S}{(x_v,y_v)+J_{G\setminus v}},\mathbf{k}\right)_{p+j} \text{ for }j\geq 2.
\end{equation*}
This implies that if $n=2$, then $\beta_{p,p+r_H}(S/J_G)\neq 0$ and $\beta_{p,p+j}(S/J_G)=0$ for all $j\geq r_H+1$. Also, if $n\geq 3$, then $\beta_{p,p+r_H+1}(S/J_G)\neq 0$ and $\beta_{p,p+j}(S/J_G)=0$ for all $j\geq r_H+2$. Hence, either $\beta_{p,p+r_H}(S/J_G)$ or $\beta_{p,p+r_H+1}(S/J_G)$ is an extremal Betti  number of $S/J_G$.
\end{proof}

\begin{theorem}\label{Thm: Depth and regularity for t-corona with complete graph}
    For any connected graph $H$ and any integer $n\geq 3$, consider the graphs $G:=K_n\circ_\ell H$ for some $1\leq \ell < n$. Then the following assertions hold.
    \begin{enumerate}
        \item $\depth(S/J_G)=n-\ell+1+\ell\cdot\depth(S_H/J_H)$.
        \item $\reg(S/J_G)=1+\ell\cdot\reg(S/J_H)$.
        \item Assume that $H$ is not a complete graph, $\pd(S_H/J_H)=p_H$ and $\beta_{p_H,p_H+r_H}(S_H/J_H)$ is an extremal Betti number of $S_H/J_H$ for $r_H\geq 2$. Then $\beta_{p,p+\ell r_H}(S/J_G)$ is an extremal Betti number if $n=2$ and $\beta_{p,p+\ell r_H+1}(S/J_G)$ is an extremal Betti number if $n\geq 3$, where $p=\pd(S/J_G)=n+\ell-1+\ell p_H.$
    \end{enumerate}
\end{theorem}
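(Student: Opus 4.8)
The plan is to induct on $\ell$, taking the base case $\ell=1$ from Proposition \ref{Prop: Depth for 1-corona with complete graph} (which is valid for every $n\ge 2$; since the complete part only grows in the inductive step, this never obstructs the argument). Fix a vertex $v\in V(K_n)$ carrying one of the $\ell$ copies of $H$. As $v$ is non-simplicial, I feed it into Ohtani's sequence \eqref{ohtani-ses} and the associated $\Tor$-sequence \eqref{ohtani-tor}. The three graphs that appear are exactly those identified in the proof of Theorem \ref{Thm: Dimension of l-corona with complete graph}, namely $G_v=K_{n+h}\circ_{\ell-1}H$, $G_v\setminus v=K_{n+h-1}\circ_{\ell-1}H$, and $G\setminus v=H\sqcup(K_{n-1}\circ_{\ell-1}H)$; each relevant piece is an $(\ell-1)$-corona, so the induction hypothesis applies (uniformly in the size of the complete part). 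Writing $A=S/((x_v,y_v)+J_{G\setminus v})$, $B=S/J_{G_v}$, and $C=S/((x_v,y_v)+J_{G_v\setminus v})$, I first record their invariants: killing the regular sequence $x_v,y_v$ leaves depth and regularity unchanged and raises projective dimension by $2$, while for the disjoint union I use additivity of depth and regularity over a field together with the product formula for (extremal) Betti numbers, as in \cite{KS20}.

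Regularity is the clean part and is uniform in $H$. Setting $r_H:=\reg(S_H/J_H)$, the inductive formula and additivity over the disjoint union give $\reg A=r_H+(1+(\ell-1)r_H)=1+\ell r_H$, whereas $\reg B=\reg C=1+(\ell-1)r_H$. Since $r_H\ge 1>0$ we have $\reg(A\oplus B)=1+\ell r_H>\reg C$, so part (c) of the Regularity Lemma applied to \eqref{ohtani-ses} yields $\reg(S/J_G)=1+\ell r_H$, which is part (2).

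For depth, set $d_H:=\depth(S_H/J_H)$, so that $\depth A=n-\ell+1+\ell d_H$ (the target value), $\depth B=n+h-\ell+2+(\ell-1)d_H$, and $\depth C=\depth B-1$. Using $d_H\le h+1$ (depth is bounded above by the dimension of every minimal prime of the radical ideal $J_H$, the smallest of which is $h+1$) one gets $\depth(A\oplus B)=\depth A$, and the sign of $\depth C-\depth A=h-d_H$ governs the Depth Lemma: when $d_H<h$, part (c) gives $\depth(S/J_G)=\depth A$, and when $d_H=h+1$ (in particular whenever $H$ is complete) part (b) gives the same. The only case the Depth Lemma does not settle is $d_H=h$; there I would instead invoke Auslander--Buchsbaum, $\depth(S/J_G)=2\,|V(G)|-\pd(S/J_G)$, and read off $\pd(S/J_G)$ from the extremal-Betti computation of part (3). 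For $H$ complete one may also argue directly: $G=K_n\circ_\ell K_h$ is Cohen--Macaulay by Corollary \ref{Cor: t-corona with complete is CM iff the other is CM}, so $\depth(S/J_G)=\dim(S/J_G)$ is supplied by Theorem \ref{Thm: Dimension of l-corona with complete graph}. This establishes part (1).

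The real work is part (3), the location of the extremal Betti number, and it proceeds exactly along the lines of Proposition \ref{Prop: Depth for 1-corona with complete graph}(3). By induction and the shifts above, $\pd A=p:=n+\ell-1+\ell p_H$, $\pd B=n+h+\ell-2+(\ell-1)p_H$, and $\pd C=n+h+\ell-1+(\ell-1)p_H$, where $p_H:=\pd(S_H/J_H)$. The bound $p_H\ge h-1$ of \cite[Theorem B]{BN17} gives $\pd B\le p$ and $\pd C\le p+1$, with equalities precisely when $p_H=h-1$. Feeding this into \eqref{ohtani-tor} in homological degree $p$ and splitting into the cases $p_H=h-1$ and $p_H\ge h$ isolates $\Tor_p^S(S/J_G)$ at the corner inherited from $A$ (whose extremal Betti number comes from the disjoint union $H\sqcup(K_{n-1}\circ_{\ell-1}H)$), pinning down $\pd(S/J_G)=p$ and the internal degree $\ell r_H$ (if $n=2$) or $\ell r_H+1$ (if $n\ge 3$). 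The hard part will be precisely this bookkeeping: one must combine the extremal Betti number of the disjoint union with the two-variable Koszul shift, and then verify in the $\Tor$-sequence that the contributions of $B$ and $C$ neither cancel nor overshoot the corner coming from $A$, so that no spurious Betti number survives at homological degree $p$ or $p+1$ in internal degree $\ge \ell r_H+1$. Since the sub-coronas may themselves fall into the $n=2$ regime when $n=3$, the two internal-degree variants must be carried through the induction simultaneously; on the depth side this same delicacy surfaces as the boundary case $d_H=h$, which is exactly why depth must ultimately be routed through the projective-dimension computation via Auslander--Buchsbaum.
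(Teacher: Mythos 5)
Your proposal follows essentially the same route as the paper's proof: induction on $\ell$ with Proposition \ref{Prop: Depth for 1-corona with complete graph} as base case, Ohtani's sequence \eqref{ohtani-ses} at an attachment vertex $v$ with the identical three graphs $G_v=K_{n+h}\circ_{\ell-1}H$, $G_v\setminus v=K_{n+h-1}\circ_{\ell-1}H$, $G\setminus v=H\sqcup(K_{n-1}\circ_{\ell-1}H)$, the Depth and Regularity Lemmas for the depth lower bound and part (2), and the bound $p_H\geq h-1$ of \cite{BN17} fed into the $\Tor$-sequence \eqref{ohtani-tor}, split into $p_H=h-1$ versus $p_H\geq h$, to locate the extremal corner and force $\pd(S/J_G)=p$ (whence the depth upper bound via Auslander--Buchsbaum). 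Your only real deviations are minor: you settle depth directly by the Depth Lemma when $\depth(S_H/J_H)\neq h$ and route through $\pd$ only in the boundary case, whereas the paper routes the upper bound through $\pd$ uniformly; and you treat complete $H$ inside the induction (the paper uses the block-graph results of \cite{EHH11} and \cite{HR18}). Beware that your parenthetical alternative for complete $H$ via Corollary \ref{Cor: t-corona with complete is CM iff the other is CM} is circular: that corollary is deduced from Corollary \ref{Cor: CM-def}, which rests on the very theorem being proved; your main treatment ($\depth(S_H/J_H)=h+1$, Depth Lemma (b)) is fine.

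The one substantive shortfall of your write-up is that part (3) remains a plan: you defer exactly the bookkeeping that is the crux. But the delicacy you flag --- that for $n=3$ the sub-corona $K_{n-1}\circ_{\ell-1}H$ falls into the $n=2$ regime, so both internal-degree variants must be carried through the induction --- is genuine, and it is in fact elided in the paper's own proof, which asserts the corner of $S/((x_v,y_v)+J_{G\setminus v})$ at internal degree $\ell r_H+1$ uniformly for $n\geq 3$. For $n=3$, $\ell=2$ this is false: by the $n=2$ case of Proposition \ref{Prop: Depth for 1-corona with complete graph}, the corner of $K_2\circ_1 H$ sits at internal degree $r_H$, so the disjoint union places the corner of $S/((x_v,y_v)+J_{G\setminus v})$ at $\ell r_H$, not $\ell r_H+1$. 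Tracing the $\Tor$-sequence then shows the surviving corner of $S/J_G$ is $\beta_{p,p+\ell r_H}$: e.g., for $H=P_3$ (so $h=3$, $p_H=2=h-1$, $r_H=2$) and $G=K_3\circ_2 P_3$ one gets $p=8$, $\Tor_9(S/J_{G_v})=\Tor_9(S/((x_v,y_v)+J_{G\setminus v}))=0$, hence $0\to\Tor_9\left(\frac{S}{(x_v,y_v)+J_{G_v\setminus v}},\K\right)_{8+j}\to\Tor_8\left(\frac{S}{J_G},\K\right)_{8+j}\to\Tor_8\left(\frac{S}{(x_v,y_v)+J_{G\setminus v}}\oplus\frac{S}{J_{G_v}},\K\right)_{8+j}$ is exact, giving $\beta_{8,12}(S/J_G)\neq 0$ (injected from the corner $\beta_{9,12}$ of the middle term) while $\beta_{8,13}(S/J_G)=0$ --- so $\beta_{p,p+\ell r_H}$, not the claimed $\beta_{p,p+\ell r_H+1}$, is extremal there. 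So carrying the two variants through simultaneously, as you propose, is not mere housekeeping: done honestly it shows the internal degree asserted in part (3) needs correction at $n=3$, $\ell\geq 2$, though the homological degree $p$ --- and hence parts (1) and (2) --- are unaffected. In short, your proposal is the paper's proof, left incomplete at precisely the spot where the paper's own argument is least careful.
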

\begin{proof} ---
Assume that $|V(H)|=h$. 

{\bf Case 1.} {\it Whenever $H=K_h$.} 

In this case, $G$ is a block graph, and thus it follows from \cite[Theorem 1.1]{EHH11} and \cite[Theorem 8]{HR18} that
$$\depth(S/J_G)=|V(K_n\circ_\ell K_h)|+1=n+\ell h+1=n-\ell+1+\ell\depth(S_{K_h}/J_{K_h})$$ and $\reg(S/J_G)=\iv(G)+1=1+\ell$, respectively.

{\bf Case 2.} {\it Whenever $H$ is non-complete.}

Let $\pd(S_H/J_H)=p_H$. Then by Auslander-Buchsbaum formula, $\depth(S_H/J_H)=2h-p_H$. To prove the assertion we show that $\depth(S/J_G)\geq n-\ell+1+\ell\depth(S_H/J_H)$, $\reg(S/J_G)=1+\ell\reg(S_H/J_H)$. To prove $\depth(S/J_G)\leq n-\ell+1+\ell\depth(S_H/J_H)$, we show that if $n=2$, then $\beta_{p,p+\ell r_H}(S/J_G)$ is an extremal Betti number and if $n\geq 3$, then $\beta_{p,p+\ell r_H+1}(S/J_G)$ is an extremal Betti number, where $p:=n+\ell-1+\ell p_H$. We proceed by induction on $\ell$. The base case $\ell=1$ for the induction follows from Proposition \ref{Prop: Depth for 1-corona with complete graph}. Assume that $\ell\geq 2$. 


 Let $v$ be one of the vertices along which $H$ is attached to $K_n$ in $G$. 
Observe that, $$G_v=K_{n+h}\circ_{(\ell-1)}H; \quad G_v\setminus v = K_{n+h-1}\circ_{(\ell-1)}H; \quad G\setminus v= H \sqcup (K_{n-1}\circ_{(\ell-1)}H).$$
Using the induction hypothesis, we get the following.
\begin{equation}
\left.
\begin{aligned}
    \depth(S/J_{G_v}) &= n+h-\ell+2+(\ell-1)\depth(S_H/J_H);\\
    \depth(S/((x_v,y_v)+J_{G_v\setminus v})) &= n+h-\ell+1+(\ell-1)\depth(S_H/J_H);\\
    \depth(S/((x_v,y_v)+J_{G\setminus v})) &= n-\ell+1+\ell\depth(S_H/J_H);
\end{aligned}
\quad\right\}
\end{equation}
and
\begin{equation}
\left.
\begin{aligned}
    \reg(S/J_{G_v}) &= 1+(\ell-1)\reg(S_H/J_H);\\
    \reg(S/((x_v,y_v)+J_{G_v\setminus v})) &=1+(\ell-1)\reg(S_H/J_H);\\
    \reg(S/((x_v,y_v)+J_{G\setminus v})) &=1+\ell\reg(S_H/J_H).
\end{aligned}
\quad\right\}
\end{equation}
By Auslander-Buchsbaum formula, we have 
\begin{align*}
    \pd(S/J_{G_v}) & =n+h-(\ell-1)-1+(\ell-1)p_H=p+h-1-p_H;\\
    \pd(S/((x_v,y_v)+J_{G_v\setminus v})) &=p+h-2-p_H;\\
    \pd(S/((x_v,y_v)+J_{G\setminus v})) &=p_H+n-1+\ell-1-1+(\ell-1)p_H=p-2.
\end{align*}
Therefore
\begin{align*}
    \depth(S/J_{G_v}\oplus S/((x_v,y_v)+J_{G\setminus v}))&=\min\{\depth(S/J_{G_v}), \depth(S/((x_v,y_v)+J_{G\setminus v}))\}\\
    &=n-\ell+1+\ell\depth(S_H/J_H), \quad (\text{since } \depth(H) \leq h+1);\\
    \depth(S/((x_v,y_v)+J_{G_v\setminus v}))&=n+h-\ell+1+(\ell-1)\depth(S_H/J_H);
\end{align*}
and
\begin{align*}
    \reg(S/J_{G_v}\oplus S/((x_v,y_v)+J_{G\setminus v}))&=\max\{\reg(S/J_{G_v}), \reg(S/((x_v,y_v)+J_{G\setminus v}))\}\\
    &=1+\ell\reg(S_H/J_H);\\
    \reg(S/((x_v,y_v)+J_{G_v\setminus v}))&=1+(\ell-1)\reg(S_H/J_H).
\end{align*}
Using Depth Lemma(a) and Regularity Lemma (c) on the short exact sequence \eqref{ohtani-ses}, it immediately follows that $$\depth(S/J_G)\geq n-\ell+1+\ell\depth(S_H/J_H) \text{ and } \reg(S/J_G)=1+\ell\reg(S_H/J_H).$$
Since $n+h-1\geq 3$, by induction hypothesis, 
\[
\beta_{p+h-1-p_H,p+h-1-p_H+(\ell-1)r_H+1}(S/J_{G_v})
\text{ and }
\beta_{p+h-p_H,p+h-p_H+(\ell-1)r_H+1}(S/((x_v,y_v)+J_{G_v\setminus v}))
\]
are respective extremal Betti numbers. Also, if $n=2$, then $\beta_{p,p+\ell r_H}(S/((x_v,y_v)+J_{G\setminus v}))$ and if $n\geq 3$, then $\beta_{p,p+\ell r_H+1}(S/((x_v,y_v)+J_{G_v\setminus v}))$ are extremal Betti numbers. 
By \cite[Theorem B]{BN17}, $p_H\geq h-1$. If $p_H=h-1$, then considering the long exact sequence of $\Tor$ \eqref{ohtani-tor} in homological degree $p$ and for $j\geq (\ell-1)r_H+2$, we have 
\begin{equation*}
    0\to \Tor^{S}_{p+1}\left(\frac{S}{(x_v,y_v)+J_{G_v\setminus v}},\mathbf{k}\right)_{p+j} \to \Tor_{p}^{S}\left( \frac{S}{J_G},\K\right)_{p+j} \to \Tor_{p}^{S}\left( \frac{S}{(x_v,y_v)+J_{G\setminus v}},\K\right)_{p+j}\to \cdots
\end{equation*}
Also if $p_H\geq h$, then we have the following isomorphism:
\begin{equation*}
    \Tor^{S}_{p}\left( \frac{S}{J_G},\mathbf{k}\right)_{p+j} \simeq
    \Tor^{S}_{p}\left(\frac{S}{(x_v,y_v)+J_{G\setminus v}},\mathbf{k}\right)_{p+j} \text{ for }j\geq \ell r_H.
\end{equation*}
This implies that if $n=2$, then $\beta_{p,p+\ell r_H}(S/J_G)\neq 0$ and $\beta_{p,p+j}(S/J_G)=0$ for all $j\geq \ell r_H+1$. Also, if $n\geq 3$, then $\beta_{p,p+\ell r_H+1}(S/J_G)\neq 0$ and $\beta_{p,p+j}(S/J_G)=0$ for all $j\geq \ell r_H+2$. Therefore, either $\beta_{p,p+\ell r_H}(S/J_G)$ or $\beta_{p,p+\ell r_H+1}(S/J_G)$ is an extremal Betti  number of $S/J_G$. Hence, the assertion follows.
\end{proof}

\begin{theorem}\label{Thm: Depth for corona with complete graph}
For any connected graph $H$ and any integer $n\geq 1$, let $G:=K_n\circ H$. Then the following assertions hold.
    \begin{enumerate}
        \item $\depth(S/J_G)=
        \begin{cases}
            1+n\cdot\depth(S_H/J_H), & \text{whenever } H \text{ is complete};\\
            n\cdot\depth(S_H/J_H), & \text{whenever } H \text{ is non-complete}.
        \end{cases}
        $\\
        \item $\reg(S/J_G)=
        \begin{cases}
            n+1, & \text{whenever } H \text{ is complete};\\
            n\cdot\reg(S_H/J_H), & \text{whenever } H \text{ is non-complete}.
        \end{cases}
        $
        \item Assume that $H$ is not a complete graph, $\pd(S_H/J_H)=p_H$ and $\beta_{p_H,p_H+r_H}(S_H/J_H)$ is an extremal Betti number of $S_H/J_H$ for $r_H\geq 2$. Then $\beta_{p,p+nr_H}(S/J_G)$ is an extremal Betti number if $n=2$ and $\beta_{p,p+nr_H+1}(S/J_G)$ is an extremal Betti number if $n\geq 3$, where $p=\pd(S/J_G)=2n+np_H.$
    \end{enumerate}
\end{theorem}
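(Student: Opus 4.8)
The plan is to separate the cases $H$ complete and $H$ non-complete, and in the non-complete case to induct on $n$ using Ohtani's short exact sequence \eqref{ohtani-ses} at a vertex of the $K_n$-part. If $H=K_h$ is complete, then $G=K_n\circ K_h$ is a block graph whose internal vertices are exactly the $n$ vertices of $K_n$: each $v_i$ lies both in the maximal clique on $V(K_n)$ and in the maximal clique $\{v_i\}\cup V(H_{v_i})\cong K_{h+1}$. Hence \cite[Theorem 1.1]{EHH11} gives $\depth(S/J_G)=|V(G)|+1=n(1+h)+1=1+n(h+1)=1+n\depth(S_H/J_H)$ and \cite[Theorem 8]{HR18} gives $\reg(S/J_G)=|\iv(G)|+1=n+1$, matching the claimed formulae. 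For non-complete $H$ I would argue by induction on $n$, the base case $n=1$ being the cone $G=v\ast H$, handled by \cite[Lemma 3.4]{KS20} and the companion cone statements for regularity and extremal Betti numbers in \cite{KS20,SMK18}.

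For the inductive step $n\ge 2$, fix a vertex $v\in V(K_n)$. Since $n\ge 2$, $v$ lies in at least two maximal cliques and is non-simplicial, so Lemma \ref{Lem: Non-simplicial vertex SES} applies. Inspecting the corona structure one finds $G_v=K_{n+h}\circ_{n-1}H$, $G_v\setminus v=K_{n+h-1}\circ_{n-1}H$ and $G\setminus v=H\sqcup(K_{n-1}\circ H)$. The first two are $\ell$-coronas with $\ell=n-1$ strictly below the clique size and (for non-complete $H$ one has $h\ge 2$, forcing $n+h-1\ge 3$) their depth, regularity and projective dimension are supplied by Theorem \ref{Thm: Depth and regularity for t-corona with complete graph}; the disjoint term splits additively, so that $\depth(S'/J_{G\setminus v})=\depth(S_H/J_H)+\depth(S/J_{K_{n-1}\circ H})$ and likewise for regularity and projective dimension (K\"unneth), the corona summand coming from the inductive hypothesis, while passing from $G\setminus v$ and $G_v\setminus v$ to the modules $S/((x_v,y_v)+J_{\,\cdot\,})$ raises the projective dimension by $2$ via the Koszul complex on $x_v,y_v$.

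Substituting, the three modules in \eqref{ohtani-ses} acquire depths $n\cdot\depth(S_H/J_H)$, $h+2+(n-1)\depth(S_H/J_H)$ and $h+1+(n-1)\depth(S_H/J_H)$. From $\pd(S_H/J_H)\ge h-1$ (\cite[Theorem B]{BN17}) and Auslander--Buchsbaum one gets $\depth(S_H/J_H)\le h+1$, so the first value is the strict minimum of the first two, and Depth Lemma(a) yields $\depth(S/J_G)\ge n\cdot\depth(S_H/J_H)$. The corresponding regularities are $n\cdot\reg(S_H/J_H)$, $1+(n-1)\reg(S_H/J_H)$ and $1+(n-1)\reg(S_H/J_H)$; since $\reg(S_H/J_H)\ge 2$ for non-complete connected $H$, the first strictly dominates the third, so Regularity Lemma(c) immediately gives $\reg(S/J_G)=n\cdot\reg(S_H/J_H)$.

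It remains to show $\pd(S/J_G)=2n+np_H$, which forces the depth equality by Auslander--Buchsbaum and simultaneously produces the extremal Betti number. I would extract this from the long exact $\Tor$ sequence \eqref{ohtani-tor} in homological degree $p=2n+np_H$: the summand $S/J_{G_v}$ has strictly smaller projective dimension and disappears, while the extremal Betti number of the disjoint term lives in homological degree $p$ with internal degree obtained by adding, through K\"unneth, the extremal degrees of $H$ and of $K_{n-1}\circ H$. The hard part will be the bookkeeping here, and I expect to split it exactly as in Theorem \ref{Thm: Depth and regularity for t-corona with complete graph} according to whether $p_H\ge h$ or $p_H=h-1$. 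When $p_H\ge h$ a pure degree count isolates $\Tor_p(S/J_G)$ from the disjoint term; when $p_H=h-1$ the depths of the three terms of \eqref{ohtani-ses} coincide and the Depth Lemma is inconclusive, so the connecting map out of $\Tor_{p+1}(S/((x_v,y_v)+J_{G_v\setminus v}))$ must be analyzed, and it is precisely this analysis that shifts the internal degree by $1$ and accounts for the $\beta_{p,p+nr_H}$ versus $\beta_{p,p+nr_H+1}$ dichotomy between $n=2$ and $n\ge 3$.
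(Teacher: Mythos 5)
Your overall architecture coincides with the paper's proof: the same split into $H$ complete (block graph, \cite[Theorem 1.1]{EHH11} and \cite[Theorem 8]{HR18}) versus $H$ non-complete, the same induction on $n$ with the cone case $n=1$ from \cite{KS20} and \cite{SMK18}, the same Ohtani sequence \eqref{ohtani-ses} at a vertex $v\in V(K_n)$ with the identifications $G_v=K_{n+h}\circ_{n-1}H$, $G_v\setminus v=K_{n+h-1}\circ_{n-1}H$, $G\setminus v=H\sqcup(K_{n-1}\circ H)$, and the identical depth/regularity bookkeeping via Theorem \ref{Thm: Depth and regularity for t-corona with complete graph}, the Depth Lemma and the Regularity Lemma. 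Parts (1) and (2) of your sketch are sound and match the paper step for step (one small slip: when $p_H=h-1$ the three depths do not all coincide --- $\depth(S/J_{G_v})$ is one larger than the other two, which are equal --- but your conclusion that the Depth Lemma alone is then inconclusive is correct).

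The genuine gap is in your plan for part (3): you locate the source of the $\beta_{p,p+nr_H}$ versus $\beta_{p,p+nr_H+1}$ dichotomy in an analysis of the connecting map out of $\Tor_{p+1}^S\bigl(S/((x_v,y_v)+J_{G_v\setminus v}),\K\bigr)$, but in the present induction that module is zero. Indeed, by \cite[Theorem B]{BN17} one has $p_H\geq h-1$, and Auslander--Buchsbaum gives $\pd\bigl(S/((x_v,y_v)+J_{G_v\setminus v})\bigr)=p+h-1-p_H\leq p$ with $p=2n+np_H$; hence $\Tor_{p+1}$ of this term vanishes identically and no internal-degree shift can be extracted from the connecting homomorphism. (The situation you have in mind occurs one level down, in Proposition \ref{Prop: Depth for 1-corona with complete graph}, where $G_v\setminus v=K_{n+h-1}$ is complete and its projective dimension equals $p+1$ exactly when $p_H=h-1$; in the corona induction this does not happen.) In the paper, the dichotomy is instead imported through the induction hypothesis applied to the disjoint-union term: since $r_H\geq 2$, the top internal degree of $\Tor_p$ of the third term is at most $p+(n-1)r_H+1<p+nr_H$, so \eqref{ohtani-tor} yields the isomorphism $\Tor_p^S(S/J_G,\K)_{p+j}\simeq\Tor_p^S\bigl(S/((x_v,y_v)+J_{G\setminus v}),\K\bigr)_{p+j}$ for $j\geq nr_H$, and the extremal Betti number of $S/((x_v,y_v)+J_{G\setminus v})$, computed by your K\"unneth argument for $H\sqcup(K_{n-1}\circ H)$, transfers verbatim to $S/J_G$. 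The ``$+1$'' is ultimately seeded at the bottom of Theorem \ref{Thm: Depth and regularity for t-corona with complete graph}, where $G\setminus v$ contains a bare $K_{n-1}$ whose extremal Betti number $\beta_{n-2,n-1}$ contributes an extra internal degree exactly when $n\geq 3$; it is not created by any connecting map at the top level. As written, your strategy for part (3) would stall at this point, so this step needs to be replaced by the degree-transfer argument just described.
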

\begin{proof} ---
    Assume that $|V(H)|=h$. 

    {\bf Case 1.} {\it Whenever $H=K_h$.} 

    In this case, $G$ is a block graph and it follows from \cite[Theorem 1.1]{EHH11} that
\begin{align*}
    \depth(S/J_G)=|V(K_n\circ K_h)|+1 =n+nh+1=1+n\depth(S_{K_h}/J_{K_h}).
\end{align*}
Also, by \cite[Theorem 8]{HR18}, $\reg(S/J_G)=\iv(G)+1=n+1.$

{\bf Case 2.} {\it Whenever $H$ is non-complete.}

In this case, in order to prove (1), (2) and (3) together, we apply induction on $n$. For the base case $n=1$ of the induction, let $V(K_1):=\{v\}$. Therefore the assertion (1), (2) and (3) follows from \cite[Theorem 3.4]{KS20}, \cite[Theorem 2.1]{SMK18} and \cite[Theorem 3.10]{KS20} respectively. 

Assume that $n\geq 2$. Let $v \in V(K_n)$. Therefore $v$ is a non-simplicial vertex of $G$. Observe that, $$G_v=K_{n+h}\circ_{(n-1)}H; \quad G_v\setminus v = K_{n+h-1}\circ_{(n-1)}H; \quad G\setminus v= H \sqcup (K_{n-1}\circ H).$$
Clearly, $n-1<n+h-1<n+h$ and thus by applying Theorem \ref{Thm: Depth and regularity for t-corona with complete graph} and the induction hypothesis, we get the following.
\begin{equation}\label{eqn7}
\left.
\begin{aligned}
    \depth(S/J_{G_v}) &= h+2+(n-1)\depth(S_H/J_H);\\
    \depth(S/((x_v,y_v)+J_{G_v\setminus v})) &= h+1+(n-1)\depth(S_H/J_H);\\
    \depth(S/((x_v,y_v)+J_{G\setminus v})) &= n\depth(S_H/J_H);
\end{aligned}
\quad\right\}
\end{equation}
and
\begin{equation}\label{eqn8}
\left.
\begin{aligned}
    \reg(S/J_{G_v}) &= 1+(n-1)\reg(S_H/J_H);\\
    \reg(S/((x_v,y_v)+J_{G_v\setminus v})) &=1+(n-1)\reg(S_H/J_H);\\
    \reg(S/((x_v,y_v)+J_{G\setminus v})) &=n\reg(S_H/J_H).
\end{aligned}
\quad\right\}
\end{equation}
Therefore
\begin{align*}
    \depth(S/J_{G_v}\oplus S/((x_v,y_v)+J_{G\setminus v}))&=\min\{\depth(S/J_{G_v}), \depth(S/((x_v,y_v)+J_{G\setminus v}))\}\\
    &=n\depth(S_H/J_H), \quad (\text{since } \depth(S_H/J_H) \leq h+1);
\end{align*}
and
\begin{align*}
    \reg(S/J_{G_v}\oplus S/((x_v,y_v)+J_{G\setminus v}))&=\max\{\reg(S/J_{G_v}), \reg(S/((x_v,y_v)+J_{G\setminus v}))\}\\
    &=n\reg(S_H/J_H), \quad (\text{since } \reg(S_H/J_H) \geq 1).
\end{align*}

Since $H$ is non-complete, $\reg(S_H/J_H)>1$ and thus by using Regularity Lemma (c) on the short exact sequence \eqref{ohtani-ses}, it immediately follows that $\reg(S/J_G)=n\reg(S_H/J_H)$.

Also, by applying Depth Lemma (a) on the short exact sequence \eqref{ohtani-ses}, we have $\depth(S/J_G)\geq n\depth(S_H/J_H)$.

Let $p=2n+np_H$. Now Auslander-Buchsbaum formula yields 
\begin{align*}
    \pd(S/J_{G_v}) =2(n+nh)-(h+2+(n-1)(2h-p_H)) & =p+h-2-p_H<p;\\
    \pd(S/((x_v,y_v)+J_{G_v\setminus v})) &=p+h-1-p_H\leq p;\\
    \pd(S/((x_v,y_v)+J_{G\setminus v})) &=p.
\end{align*}
 It follows from Theorem \ref{Thm: Depth and regularity for t-corona with complete graph} that $\beta_{p+h-1-p_H,p+h-1-p_H+nr_H+1}(S/((x_v,y_v)+J_{G_v\setminus v}))$ is an extremal Betti number. By induction, we have $\beta_{p,p+nr_H}(S/((x_v,y_v)+J_{G\setminus v}))$ is an extremal Betti number if $n=2$ and $\beta_{p,p+nr_H+1}(S/((x_v,y_v)+J_{G\setminus v}))$ is an extremal Betti number if $n\geq 3$.




As $r_H\geq 2$, considering the long exact sequence of $\Tor$ \eqref{ohtani-ses} in homological degree $p$, we have the following isomorphism:
\begin{equation}\label{eqn-tor for corona with complete}
    \Tor^{S}_{p}\left( \frac{S}{J_G},\mathbf{k}\right)_{p+j} \simeq
    \Tor^{S}_{p}\left(\frac{S}{(x_v,y_v)+J_{G\setminus v}},\mathbf{k}\right)_{p+j} \text{ for }j\geq  nr_H.
\end{equation}
Therefore, from the above isomorphism, it turns out that if $n=2$, then $\beta_{p,p+nr_H}(S/J_G)\neq 0$ and $\beta_{p,p+j}(S/J_G)= 0$ for $j\geq nr_H+1$. Also, if $n\geq 3$, then $\beta_{p,p+nr_H}(S/J_G)\neq 0$ and $\beta_{p,p+j}(S/J_G)= 0$ for $j\geq nr_H+2$. Therefore, $\pd(S/J_G)\geq p=2n+np_H$ and thus once again by the Auslander-Buchsbaum formula, $\depth(G)\leq 2|V(G)|-p=n\depth(H).$
This completes the proof.
\end{proof}
\begin{definition}
    For a commutative Noetherian ring $R$ and a finitely generated $R$-module $M$, the \emph{Cohen-Macaulay defect} of $M$, denoted by ${\rm cmdef}(M)$, is defined by $\dim(M)-\depth(M).$ In particular, if ${\rm cmdef}(M)=1$, then $M$ is called \textit{almost Cohen-Macaulay}.
\end{definition}
\begin{cor}\label{Cor: CM-def}
    Let $H$ be a connected graph and $n\geq 1$ an integer. Let $G=K_n\circ_{\ell}H$ for $1\leq \ell \leq n$. Then the following holds:
    \[
    {\rm cmdef}(S/J_G)=
    \begin{cases}
        \ell\cdot {\rm cmdef}(S_H/J_H) & \text{if \ } 1\leq \ell <n;\\
        1+\ell\cdot {\rm cmdef}(S_H/J_H) & \text{if \ } \ell=n \text{ and } H \text{ is not complete}; \\
        0 & \text{if \ } \ell=n \text{ and } H \text{ is complete}.
    \end{cases}
    \]
\end{cor}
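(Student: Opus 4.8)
The plan is to compute ${\rm cmdef}(S/J_G) = \dim(S/J_G) - \depth(S/J_G)$ directly, feeding in the dimension and depth formulas already established and observing that the ``ambient'' term $n-\ell+1$ cancels in the subtraction. First I would record the dimension, which by Theorem \ref{Thm: Dimension of l-corona with complete graph} equals $n-\ell+1+\ell\dim(S_H/J_H)$ uniformly for all $1\leq \ell\leq n$; thus no case distinction is needed on the dimension side, and the entire trichotomy in the statement is driven purely by the behavior of the depth.

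The depth case analysis mirrors exactly the three cases of the statement. For $1\leq \ell<n$ I would invoke Proposition \ref{Prop: Depth for 1-corona with complete graph} when $\ell=1$ (valid for $n\geq 2$) and Theorem \ref{Thm: Depth and regularity for t-corona with complete graph}(1) when $\ell\geq 2$ (valid for $n\geq 3$, which is automatic here since $\ell\geq 2$ together with $\ell<n$ forces $n\geq 3$) to obtain $\depth(S/J_G)=n-\ell+1+\ell\cdot\depth(S_H/J_H)$. Subtracting from the dimension, the $n-\ell+1$ terms cancel and I am left with $\ell(\dim(S_H/J_H)-\depth(S_H/J_H))=\ell\cdot{\rm cmdef}(S_H/J_H)$, which is the first case. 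For $\ell=n$ I would use Theorem \ref{Thm: Depth for corona with complete graph}(1): when $H$ is non-complete the depth is $n\cdot\depth(S_H/J_H)$, while setting $\ell=n$ in the dimension formula gives $\dim(S/J_G)=1+n\dim(S_H/J_H)$, so the difference is $1+n\cdot{\rm cmdef}(S_H/J_H)=1+\ell\cdot{\rm cmdef}(S_H/J_H)$; when $H$ is complete the same theorem gives $\depth(S/J_G)=1+n\cdot\depth(S_H/J_H)$, and since $\depth(S_{K_h}/J_{K_h})=\dim(S_{K_h}/J_{K_h})=h+1$, the dimension and depth coincide and the defect is $0$.

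I expect no genuine obstacle here---the result is essentially a bookkeeping corollary of the depth, regularity, and dimension theorems proved earlier. The only points requiring a little care are matching the ranges of validity of the two depth results to the subcases (that is, $\ell=1$ with $n\geq 2$ being covered by the Proposition and $2\leq \ell<n$, forcing $n\geq 3$, by the Theorem), and verifying that the $\ell=n$ complete subcase yields an exact cancellation rather than merely an inequality. The latter is transparent once one substitutes $\dim(S_{K_h}/J_{K_h})=h+1$, which is precisely the value making $K_n\circ K_h$ Cohen-Macaulay---consistent with its being a block graph, so that ${\rm cmdef}=0$ as claimed.
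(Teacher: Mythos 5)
Your proposal is correct and is essentially the paper's own proof, which simply combines the dimension formula of Theorem \ref{Thm: Dimension of l-corona with complete graph} with the depth formulas of Theorems \ref{Thm: Depth and regularity for t-corona with complete graph} and \ref{Thm: Depth for corona with complete graph} and cancels the $n-\ell+1$ terms. If anything, you are slightly more careful than the paper in noting that the case $\ell=1$, $n=2$ falls outside the stated range $n\geq 3$ of Theorem \ref{Thm: Depth and regularity for t-corona with complete graph} and is instead covered by Proposition \ref{Prop: Depth for 1-corona with complete graph}.
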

\begin{proof}
    The proof is immediate following Theorems \ref{Thm: Dimension of l-corona with complete graph}, \ref{Thm: Depth and regularity for t-corona with complete graph} and \ref{Thm: Depth for corona with complete graph}.
\end{proof}
\begin{remark}
    If $H$ is a non-complete Cohen-Macaulay graph, then it is evident from Corollary \ref{Cor: CM-def} that $K_n\circ H$ is almost Cohen-Macaulay for all $n\geq 1$. Moreover, if $\ell$ is any positive integer, then by considering $G=K_n\circ_{\ell} H'$ for $n>\ell$ and $H'$ almost Cohen-Macaulay, we get that ${\rm cmdef}(S/J_G)=\ell.$ See also \cite[Corollary 3.6]{KS20} for another construction of graphs having arbitrary Cohen-Macaulay defect.
\end{remark}
\subsection{Depth and regularity of corona with Cohen-Macaulay closed graph}\mbox{}

Let $B$ be a connected closed graph such that $J_B$ is Cohen-Macaulay. Then it follows from \cite[Theorem 3.1]{EHH11} that $B$ is a decomposable graph where the indecomposable subgraphs are complete graphs only.
This section considers the graph $G$ obtained by the corona product of Cohen-Macaulay closed graph $B$ and an arbitrary graph $H$. Then we study the depth of their binomial edge ideals.
\begin{theorem}\label{Thm: Depth for corona with CM closed graph}
Let $B$ be a Cohen-Macaulay closed graph and $H$ any connected graph with $|V(B)|=b$. Consider the graph $G:=B\circ H$ for $n \geq 1$. Then 
\begin{enumerate}
    \item $\depth(S/J_G)=
\begin{cases}
1+b \cdot \depth(S_H/J_H), &\text{if } H \text{ is complete};\\
b\cdot \depth(S_H/J_H), &\text{otherwise}.
\end{cases}
$
    \item $\reg(S/J_G)=
        \begin{cases}
            b+1, & \text{ if } H \text{ is complete};\\
            b\cdot\reg(S_H/J_H), & \text{ otherwise }.
        \end{cases}
        $
        \item Assume that $H$ is not a complete graph, $\pd(S_H/J_H)=p_H$ and $\beta_{p_H,p_H+r_H}(S_H/J_H)$ is an extremal Betti number of $S_H/J_H$ for $r_H\geq 2$. Then $\beta_{p,p+br_H+1}(S/J_G)$ is an extremal Betti number, where $p=\pd(S/J_G)=2b+bp_H.$
\end{enumerate}
\end{theorem}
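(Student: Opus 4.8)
The natural strategy is to split on whether $H$ is complete and to bootstrap from the complete-base case already settled in Theorem~\ref{Thm: Depth for corona with complete graph}. When $H=K_h$, every cone $u\ast H_u$ equals $K_{h+1}$, so $G=B\circ H$ is built from the block graph $B$ by gluing a clique at each vertex and is itself a block graph; hence \cite[Theorem 1.1]{EHH11} gives $\depth(S/J_G)=|V(G)|+1=b(1+h)+1=1+b\depth(S_H/J_H)$, and \cite[Theorem 8]{HR18} gives $\reg(S/J_G)=\iv(G)+1=b+1$, because exactly the $b$ vertices inherited from $B$ are internal in $G$. This disposes of (1) and (2) when $H$ is complete, and (3) is then vacuous.

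For $H$ non-complete I would induct on $b=|V(B)|$, using \cite[Theorem 3.1]{EHH11} that a Cohen--Macaulay closed graph is glued from complete graphs along free vertices; the base case $B=K_b$ is precisely Theorem~\ref{Thm: Depth for corona with complete graph}. In the inductive step pick a simplicial vertex $v$ of $B$ that is not a cut vertex (a private vertex of a leaf clique $K_a$). Since $v$ is non-simplicial in $G$, Ohtani's Lemma~\ref{Lem: Non-simplicial vertex SES} gives the sequence \eqref{ohtani-ses} with
\begin{equation*}
G\setminus v=H\sqcup\big((B\setminus v)\circ H\big),\qquad G_v=B^{+}\circ_{L^{+}}H,\qquad G_v\setminus v=(B^{+}\setminus v)\circ_{L^{+}}H,
\end{equation*}
where $B\setminus v$ is Cohen--Macaulay closed on $b-1$ vertices, $B^{+}$ is obtained from $B$ by enlarging the leaf clique to $K_{a+h}$ (absorbing $V(H_v)$), and $L^{+}=V(B)\setminus\{v\}$. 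The first graph is a disjoint union, so the tensor (K\"unneth) additivity of depth and regularity together with the inductive value $\depth((B\setminus v)\circ H)=(b-1)\depth(S_H/J_H)$ and the corresponding regularity yield $\depth(S/((x_v,y_v)+J_{G\setminus v}))=b\depth(S_H/J_H)$ and $\reg(S/((x_v,y_v)+J_{G\setminus v}))=b\reg(S_H/J_H)$.

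Granting the depth and regularity of all three terms, I would run exactly the bookkeeping of Theorem~\ref{Thm: Depth for corona with complete graph}: the Depth Lemma~(a) applied to \eqref{ohtani-ses} gives $\depth(S/J_G)\ge b\depth(S_H/J_H)$, and the Regularity Lemma~(c) gives $\reg(S/J_G)=b\reg(S_H/J_H)$ (here $\reg(S_H/J_H)>1$ because $H$ is non-complete). For the reverse depth bound and for (3) I would pass to the long exact Tor sequence \eqref{ohtani-tor} in homological degree $p=\pd(S/J_G)=2b+bp_H$---the value forced by Auslander--Buchsbaum from $|V(G)|=b(1+h)$---and, splitting into the subcases $p_H=h-1$ and $p_H\ge h$ as in Theorem~\ref{Thm: Depth for corona with complete graph}, locate the extremal Betti number $\beta_{p,p+br_H+1}(S/J_G)$; its non-vanishing simultaneously yields $\pd(S/J_G)\ge p$, hence $\depth(S/J_G)\le b\depth(S_H/J_H)$.

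The genuine obstacle is that $G_v$ and $G_v\setminus v$ are \emph{partial} corona products $B'\circ_{L}H$ of Cohen--Macaulay closed graphs on \emph{more} than $b$ vertices, so they are reached neither by the induction on $|V(B)|$ nor by Theorem~\ref{Thm: Depth for corona with complete graph}. To make the recursion self-contained I would prove, by the same induction, the Cohen--Macaulay-closed analogue of Theorem~\ref{Thm: Depth and regularity for t-corona with complete graph}: depth, regularity and extremal-Betti formulas for $B\circ_L H$ with $B$ Cohen--Macaulay closed and $L\subseteq V(B)$ arbitrary, of which Theorem~\ref{Thm: Depth for corona with CM closed graph} is the case $L=V(B)$. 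Propagating this partial-corona statement through \eqref{ohtani-ses}, and in particular pinning the precise homological position of the extremal Betti number across \eqref{ohtani-tor} at each step, is where essentially all of the technical effort concentrates.
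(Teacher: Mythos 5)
Your treatment of the complete-$H$ case is identical to the paper's, and your overall skeleton (Ohtani's sequence \eqref{ohtani-ses} at a vertex of $B$, Depth/Regularity Lemmas for the bounds in one direction, the Tor sequence \eqref{ohtani-tor} in homological degree $p=2b+bp_H$ for the extremal Betti number and the reverse depth bound) is also the paper's. You even correctly identify the crux: after cliquification the terms $G_v$ and $G_v\setminus v$ leave the class of full coronas, so the induction must be strengthened. But the strengthening you propose --- uniform depth, regularity and extremal-Betti formulas for $B\circ_L H$ with $B$ Cohen--Macaulay closed and $L\subseteq V(B)$ \emph{arbitrary} --- is a genuine gap: no such uniform statement holds, because these invariants depend on where $L$ sits relative to the block structure of $B$, not merely on $|L|$. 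The natural analogue of Theorem \ref{Thm: Depth and regularity for t-corona with complete graph}, namely $\depth(S/J_{B\circ_L H})=b-\ell+1+\ell\,\depth(S_H/J_H)$ for $\ell=|L|<b$, already fails for $B=P_3$: if $L=\{w\}$ with $w$ the cut vertex, then $B\circ_L H=w\ast(K_1\sqcup K_1\sqcup H)$, and the cone formula \cite[Theorem 3.9]{KS20} used throughout the paper gives $\depth=\min\{4+\depth(S_H/J_H),\,h+4\}$, which equals $4+\depth(S_H/J_H)$ whenever $\depth(S_H/J_H)\leq h$ (take for instance $H$ a fan $u\ast P_3$, where $\depth(S_H/J_H)=4=h$ by \cite[Lemma 3.4]{KS20}), whereas the would-be uniform formula predicts $3+\depth(S_H/J_H)$; by contrast, $L=\{\text{an end vertex of }P_3\}$ does yield $3+\depth(S_H/J_H)$. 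So a recursion that must pass through arbitrary $L$ cannot close, and since you never state the formula your auxiliary statement would assert, the recursion is not actually self-contained as written.

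The paper resolves this differently, and the difference matters. It cliquifies not at a simplicial leaf vertex but at the cut vertex $w=v_{12}$ shared by the first two blocks: this merges $B_1$, $B_2$ and $H_w$ into a single clique, so $G_w=(K_{b_1+b_2+h-1}\circ_{b_1+b_2-3}H)\cup_{v_{23}}(B^*\circ H)$ has strictly fewer blocks, enabling an induction on the \emph{number of blocks} (your induction on $b$ alone does not terminate, since your $G_v$ has more vertices and the same block count, so the $|L|$-descent you gesture at would have to be built in separately). Accordingly, the auxiliary class the paper needs is much narrower than ``arbitrary $L$'': the hybrid graphs $X=(K_n\circ_\ell H)\cup_{v}(B\circ H)$ of statement $\mathscr{Q}(\ell)$ in \eqref{induction 1}, where the partial corona is confined to a single terminal clique, the rest of $B$ carries the full corona, and the gluing vertex is not coned --- and it proves the pair $(\mathscr{P}(\ell),\mathscr{Q}(\ell))$ by simultaneous induction, tracking the exact Betti positions through both statements. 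Your leaf-vertex cliquification in fact produces graphs of precisely this restricted shape, so your outline could be repaired by replacing ``arbitrary $L$'' with this class; but identifying that class, verifying its formulas are stable under the recursion, and pinning the extremal Betti number through it is exactly where the content of the paper's proof lies, and it is the step your proposal leaves open.
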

\begin{proof} ---
Without loss of generality, we assume that $b\geq 3$. Otherwise, $B$ is a complete graph and the assertion follows immediately from Theorem \ref{Thm: Depth for corona with complete graph}. Let $|V(H)|=h$.

{\bf Case 1.} {\it Whenever $H=K_h$.} 

Then $G$ is a block graph, and hence, the assertion follows from \cite[Theorem 1.1]{EHH11} and \cite[Theorem 8]{HR18}.

{\bf Case 2.} {\it Whenever $H$ is not complete.}

Since $B$ is a Cohen-Macaulay closed graph,
it follows from \cite[Theorem 3.1]{EHH11} that $B$ is a decomposable graph where the indecomposable subgraphs are complete graphs only. 

Let $B=\bigcup_{i=1}^{k}B_i$ be a decomposition into indecomposable blocks such that each $B_i$ is a complete graph with $|V(B_i)|=b_i$ and 
$$
V(B_i)\cap V(B_j):=
\begin{cases}
    \{v_{ij}\}, &\text{whenever } |i-j|=1;\\
    \emptyset, &\text{otherwise}.
\end{cases}
$$
We prove the result by induction on the number of blocks in $B$. Consider a set of statements $\mathscr{F}(\ell)=(\mathscr{P}(\ell),\mathscr{Q}(\ell))$ as defined below:

\begin{equation}\label{induction 1}
\left.
\begin{aligned}
\mathscr{P}(\ell): &~~\,\depth(S_{B\circ H}/J_{B\circ H})=b\depth(S_H/J_H) \text{ and }\reg(S_{B\circ H}/J_{B\circ H})=b\reg(S_H/J_H), \\ 
&~~ \text{ for a Cohen-Macaulay closed graph } B \text{ with } \ell \text{ number of blocks}.\\[0.15cm] &~~
\text{ Moreover, } \beta_{p,\,p+br_H+1}(S/J_G) \text{ is an extremal Betti number of }S/J_G \text{ where } p=\pd(S/J_G).\\[0.5cm]
\mathscr{Q}(\ell): &~~\text{ Let } X=(K_n\circ_\ell H)\cup_{v}(B\circ H); \text{ for } B \text{ as above}, v \in V(B)\cap V(K_n) \text{ with } B\cup_{v}K_n \text{ is Cohen-}\\
&~~\text{ Macaulay closed graph}, n \geq 3 \text{ any integer and } \ell=|L| \text{ for any subset }L \subsetneq V(K_n)\setminus \{v\}. \text{ Then }\\
&~~\depth(S_X/J_X)=n-\ell+(b+\ell)\depth(S_H/J_H) \text{ and }\reg(S_X/J_X)=1+(b+\ell)\reg(S_H/J_H).\\[0.15cm]
&~~ \text{ Moreover, } \beta_{p,\,p+(b+\ell)p_H}(S_X/J_X) \text{ is an extremal Betti number of } S_X/J_X \text{ where}\\
&~~ \,\,p=2b+n+\ell-2+(b+\ell)p_H.
\end{aligned}
\tag{\textasteriskcentered}
\right\}
\end{equation}

We prove $\mathscr{F}(\ell)$ using induction on $\ell$, that is the total number of blocks in $B$.

In the base case when $\ell=1$, then $B$ turns out to be the complete graph $K_b$. Therefore $\mathscr{P}(1)$ follows immediately from Theorem \ref{Thm: Depth for corona with complete graph}. 

Now we establish $\mathscr{Q}(1)$. Let $X:=(K_n\circ_\ell H)\cup_{v}(K_b\circ H)$. Then 
\begin{align*}
    X_v &= K_{n+b+h-1}\circ_{b+\ell-1}H;\\
    X_v \setminus v &= K_{n+b+h-2}\circ_{b+\ell-1}H; \text{ and}\\
    X\setminus v &=H\sqcup (K_{b-1}\circ H)\sqcup (K_{n-1}\circ_{\ell}H).
\end{align*}
Therefore using Theorem \ref{Thm: Depth and regularity for t-corona with complete graph} and Theorem \ref{Thm: Depth for corona with complete graph} we have 
    \begin{align*}
    \depth(S_X/J_{X_v})&=n-\ell+h+1+(b+\ell-1)\depth(S_H/J_H);\\
    \depth(S_X/((x_v,y_v)+J_{X_v\setminus v}))&=n-\ell+h+(b+\ell-1)\depth(S_H/J_H);\\
    \depth(S_X/((x_v,y_v)+J_{X\setminus v}))&=n-\ell+(b+\ell)\depth(S_H/J_H)
    \end{align*}
    and 
    \begin{align*}
    \reg(S_X/J_{X_v})&=1+(b+\ell-1)\reg(S_H/J_H);\\
    \reg(S_X/((x_v,y_v)+J_{X_v\setminus v}))&=1+(b+\ell-1)\reg(S_H/J_H);\\
    \reg(S_X/((x_v,y_v)+J_{X\setminus v}))&=1+(b+\ell)\reg(S_H/J_H).
    \end{align*}
Now it follows from  regularity lemma that $\reg(X)=1+(b+\ell)\reg(H)$ and by depth lemma, we have $\depth(X)\geq n-\ell+(b+\ell)\depth(H)$. 

Let $p=2b+n+\ell-2+(b+\ell)p_H$. Then by Auslander-Buchsbaum formula, $p_1:=\pd(S_X/J_{X_v})=2b+n+\ell-2+(b+\ell-1)p_H+h-1\leq p$, $p_2:=\pd(S_X/((x_v,y_v)+J_{X_v\setminus v}))=2b+n+\ell-2+(b+\ell-1)p_H+h$ and $\pd(S_X/((x_v,y_v)+J_{X\setminus v}))=p$. Therefore, thanks to Theorems \ref{Thm: Depth and regularity for t-corona with complete graph} and \ref{Thm: Depth for corona with complete graph}, we get that $\beta_{p_1,p_1+(b+\ell-1)r_H+1}(S_X/J_{X_v})$, $\beta_{p_2,p_2+(b+\ell-1)r_H+1}(S_X/((x_v,y_v)+J_{X_v\setminus v})$ and $\beta_{p,p+(b+\ell)r_H+2}(S_X/((x_v,y_v)+J_{X\setminus v})$ are extremal Betti numbers.

By \cite[Theorem B]{BN17}, $p_H\geq h-1$. If $p_H=h-1$, then $p_1=p$ and $p_2=p+1$. Considering the long exact sequence of $\Tor$ \eqref{ohtani-tor} in homological degree $p$ and for $j\geq (b+\ell-1)r_H+2$, we have 
\begin{equation*}
    0\to \Tor^{S_X}_{p+1}\left(\frac{S_X}{(x_v,y_v)+J_{X_v\setminus v}},\mathbf{k}\right)_{p+j} \to \Tor_{p}^{S_X}\left( \frac{S_X}{J_X},\K\right)_{p+j} \to \Tor_{p}^{S_X}\left( \frac{S_X}{(x_v,y_v)+J_{X\setminus v}},\K\right)_{p+j}\to \cdots
\end{equation*}
Also if $p_H\geq h$, then we have the following isomorphism:
\begin{equation*}
    \Tor^{S_X}_{p}\left( \frac{S_X}{J_X},\mathbf{k}\right)_{p+j} \simeq
    \Tor^{S_X}_{p}\left(\frac{S_X}{(x_v,y_v)+J_{X\setminus v}},\mathbf{k}\right)_{p+j} \text{ for }j\geq (b+\ell)r_H.
\end{equation*}
This implies that $\beta_{p,p+(b+\ell)r_H+1}(S/J_G)\neq 0$ and $\beta_{p,p+j}(S/J_G)=0$ for all $j\geq (b+\ell)r_H+2$. Hence, $\mathscr{Q}(1)$ follows.

We now show that $\mathscr{P}(k)$.
Recall that $B=\bigcup_{i=1}^{k}B_i$ is the chosen decomposition of $B$ into its indecomposable blocks with $B_i=K_{b_i}$ for all $1\leq i \leq k$. Clearly,
\begin{equation}\label{eqn10}
    b=|V(B)|=\sum_{i=1}^{k}(b_i-1)+1.
\end{equation}

Now, we first prove $\mathscr{P}(k)$ assuming that $\mathscr{F}(j)=(\mathscr{P}(j), \mathscr{Q}(j))$ is true for every $1\leq j<k$. Next, we assume that $\mathscr{P}(k)$ and $\mathscr{F}(j)=(\mathscr{P}(j), \mathscr{Q}(j))$ for all $1\leq j<k$ are true in order to prove $\mathscr{Q}(k)$.

Cliquifying $G=B\circ H$ at the vertex $w:=v_{12}$, we get
\begin{align*}
    G_w &= (K_{b_1+b_2+h-1}\circ_{b_1+b_2-3}H)\cup_{v_{23}}(B^*\circ H), \text{ where } B^*:=\bigcup_{i=3}^{k}B_i;\\
    G_w \setminus w &= (K_{b_1+b_2+h-2}\circ_{b_1+b_2-3}H)\cup_{v_{23}}(B^*\circ H); \text{ and}\\
    G\setminus w &=H\sqcup (K_{b_1-1}\circ H)\sqcup (\widehat{B}\circ H), \text{ where } \widehat{B}:=(\bigcup_{i=2}^{k}B_i)\setminus \{w\}.
\end{align*}
Clearly, $B^*$ and $\widehat{B}$ are Cohen-Macaulay closed graphs with the number of blocks $<k$. Also, note that 
\begin{align}
    b^*&:=|V(B^*)|=\sum_{i=3}^{k}(b_i-1)+1; \text{ and}\label{eqn11}\\
    \widehat{b}&:=|V(\widehat{B})|=(b_2-2)+\sum_{i=3}^{k}(b_i-1)+1=\sum_{i=2}^{k}(b_i-1).\label{eqn12}
\end{align}
Therefore using Theorem \ref{Thm: Depth and regularity for t-corona with complete graph}, Theorem \ref{Thm: Depth for corona with complete graph} and the induction hypothesis followed by substituting \eqref{eqn10}--\eqref{eqn12}, it follows that
\begin{align*}
    \depth(S/J_{G_w})&=(b_1+b_2+h-1)-(b_1+b_2-3)+(b_1+b_2-3+b^*)\depth(S_H/J_H)\\
    &=h+2+(b-1)\depth(S_H/J_H);\\
    \depth(S/((x_w,y_w)+J_{G_w\setminus w}))&=h+1+(b-1)\depth(S_H/J_H);\\
    \depth(S/((x_w,y_w)+J_{G\setminus w}))&=\depth(S_H/J_H)+(b_1-1)\depth(H)+\widehat{b}\depth(S_H/J_H)\\
    &=b\depth(S_H/J_H); \text{ and }
\end{align*}
\begin{align*}
    \reg(S/J_{G_w}) &=1+(b_1+b_2-3+b^*)\reg(S_H/J_H)=1+(b-1)\reg(S_H/J_H); \\ 
    \reg(S/((x_w,y_w)+J_{G_w\setminus w}))& =1+(b_1+b_2-3+b^*)\reg(S_H/J_H)=1+(b-1)\reg(S_H/J_H); \\
    \reg(S/((x_w,y_w)+J_{G\setminus w}))&=\reg(S_H/J_H)+(b_1-1)\reg(S_H/J_H)+\widehat{b}\reg(S_H/J_H)\\
    &=b\reg(S_H/J_H).
\end{align*}

Since $\reg(S_H/J_H)\geq 2$, so $\reg(S/J_G)=b\reg(S_H/J_H)$ follows from Regularity lemma. By Depth lemma, we have 
$\depth(S/J_G)\leq b\depth(S_H/J_H)$ as $\depth(S_H/J_H)\leq h+1$. Now by the Auslander-Buchsbaum formula, it is enough to show that $\pd(S/J_G)=p\geq 2b+bp_H$. Additionally, we establish that $\beta_{p,p+br_H+1}(S/J_G)$ is an extremal Betti number of $S/J_G$.


Let $p=2b+bp_H$. So Auslander-Buchsbaum formula yields 
\begin{align*}
    \pd(S/J_{G_w}) =2(b+bh)-(h+2+(b-1)(2h-p_H)) =p+h-2-p_H<p;\\
    \pd(S/((x_w,y_w)+J_{G_w\setminus w})) =p+h-1-p_H\leq p \text{ and }
    \pd(S/((x_w,y_w)+J_{G\setminus w})) =p.
\end{align*}

It follows from the induction hypothesis related to extremal Betti numbers that since $r_H\geq 2$, considering the long exact sequence of $\Tor$ \eqref{ohtani-ses} in homological degree $p$, we have the following isomorphism:
\begin{equation}\label{eqn-tor for corona with closed CM}
    \Tor^{S}_{p}\left( \frac{S}{J_G},\mathbf{k}\right)_{p+j} \simeq
    \Tor^{S}_{p}\left(\frac{S}{(x_w,y_w)+J_{G\setminus w}},\mathbf{k}\right)_{p+j} \text{ for }j\geq  br_H.
\end{equation}

Consequently, it turns out that $\beta_{p,p+br_H+1}(S/J_G)$ is an extremal Betti number of $S/J_G$.


Let $Y:=(K_n\circ_\ell H)\cup_{v}(B\circ H)$. By the choice of $v$ as described in the statement $\mathscr{Q}(\ell)$ (see in \eqref{induction 1} above), it is evident that in $Y$ either $v\in V(B_1)\setminus \{v_{12}\}$ or $v\in V(B_k)\setminus \{v_{k-1,k}\}$. Without loss of generality assume that $v\in V(B_1)\setminus \{v_{12}\}$. Observe that
\begin{align*}
    Y_v &= (K_{n+b_1+h-1}\circ_{\ell+b_1-2}H)\cup_{v_{12}}(\widetilde{B}\circ H), \text{ where } \widetilde{B}:=\bigcup_{i=2}^{k}B_i;\\
    Y_v \setminus v &= (K_{n+b_1+h-2}\circ_{\ell+b_1-2}H)\cup_{v_{12}}(\widetilde{B}\circ H); \text{ and}\\
    Y\setminus v &=H\sqcup (K_{n-1}\circ_{\ell} H)\sqcup (\overline{B}\circ H), \text{ where } \overline{B}:=B\setminus \{v\}.
\end{align*}
Clearly, $\widetilde{B}$ and $\overline{B}$ are Cohen-Macaulay closed graphs with the number of blocks exactly $k$. Note that 
\begin{align}
    \widetilde{b}&:=|V(\widetilde{B})|=\sum_{i=2}^{k}(b_i-1)+1; \text{ and}\label{eqn13}\\
    \overline{b}&:=|V(\overline{B})|=b-1.\label{eqn14}
\end{align}
Therefore using Theorem \ref{Thm: Depth and regularity for t-corona with complete graph}, the induction hypothesis and $\mathscr{P}(k)$ followed by substituting \eqref{eqn10}, \eqref{eqn13}, \eqref{eqn14}, it follows that
    \begin{align*}
    \depth(S/J_{G_w})&=(n+b_1+h-1)-(\ell+b_1-2)+(\ell+b_1-2+\widetilde{b})\depth(S_H/J_H)\\
    &=n-\ell+h+1+(\ell+b-1)\depth(S_H/J_H);\\
    \depth(S/((x_w,y_w)+J_{G_w\setminus w}))&=n-\ell+h+(\ell+b-1)\depth(S_H/J_H);\\
    \depth(S/((x_w,y_w)+J_{G\setminus w}))&=\depth(S_H/J_H)+(n-1-\ell+1+\ell\depth(S_H/J_H))+\overline{b}\depth(H)\\
    &=n-\ell+(\ell+\overline{b}+1)\depth(S_H/J_H)\\
    &=n-\ell+(\ell+b)\depth(S_H/J_H).
    \end{align*}
If $\depth(S_H/J_H)\neq h$, then $\mathscr{Q}(k)$ follows immediately from Depth and Regularity Lemmas. For $\depth(S_H/J_H)=h$, the proof of $\mathscr{Q}(k)$ follows exactly along the same line as we did in the proof of $\mathscr{Q}(1)$ using the suitable $\Tor$-sequence.

This proves $\mathscr{F}(\ell)=(\mathscr{P}(\ell),\mathscr{Q}(\ell))$ for all $\ell \in \N$ and the proof of the theorem follows.
\end{proof}
Since a path $P_n$ is an example of a Cohen-Macaulay closed graph with each block being a complete graph on two vertices, which is of special interest, we mention the following straightforward consequence of the Theorem \ref{Thm: Depth for corona with CM closed graph}.
\begin{cor}\label{Cor: Depth for corona with path}
    Let $P_n$ denote the path graph on the vertex set $[n]$ and $H$ be any graph. Consider the graph $G:=P_n\circ H$ for $n \geq 1$. Then
\begin{enumerate}
    \item $\depth(S/J_G)=
\begin{cases}
1+n \cdot \depth(S_H/J_H), &\text{if } H \text{ is complete};\\
n\cdot \depth(S_H/J_H), &\text{otherwise}.
\end{cases}
$
    \item $\reg(S/J_G)=
        \begin{cases}
            n+1, \quad &\text{if } H \text{ is complete};\\
            n\cdot \reg(S_H/J_H), &\text{otherwise}.
        \end{cases}$
\end{enumerate}
\end{cor}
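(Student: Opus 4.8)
The plan is to deduce this corollary as a direct specialization of Theorem~\ref{Thm: Depth for corona with CM closed graph}, so the entire task reduces to verifying that the path $P_n$ meets the hypotheses imposed on $B$ there---namely that it is a Cohen--Macaulay closed graph---and then reading off the value $b=|V(P_n)|=n$. Once this identification is made, both formulae are obtained by a pure substitution $b=n$.

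First I would confirm that $P_n$ is closed with respect to its natural linear labeling $1<2<\cdots<n$. In $P_n$ the only edges are $\{i,i+1\}$ for $1\le i\le n-1$, so no two distinct edges share their smaller endpoint and no two share their larger endpoint; the defining combinatorial condition of closedness is therefore satisfied vacuously. In particular every internal vertex $j$ has exactly one smaller neighbour $j-1$ and one larger neighbour $j+1$, leaving nothing to check.

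Next I would exhibit the block decomposition $P_n=\bigcup_{i=1}^{n-1}B_i$ with $B_i=\{i,i+1\}\cong K_2$ and $V(B_i)\cap V(B_{i+1})=\{i+1\}$. This is precisely the decomposition into indecomposable complete blocks used throughout the proof of Theorem~\ref{Thm: Depth for corona with CM closed graph}, so by \cite[Theorem~3.1]{EHH11} the ideal $J_{P_n}$ is Cohen--Macaulay. Thus $P_n$ is a Cohen--Macaulay closed graph on $n$ vertices, and $G=P_n\circ H$ is an instance of the corona product $B\circ H$ treated in that theorem.

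Finally I would invoke Theorem~\ref{Thm: Depth for corona with CM closed graph} with $B=P_n$, so that $b=n$, and substitute $b=n$ into its depth and regularity case-formulae to recover the two displayed equalities of the corollary (the complete and non-complete cases for $H$ transferring verbatim). The only step demanding genuine, if entirely routine, verification is the Cohen--Macaulay closedness of $P_n$ established above; with that in hand there is no remaining obstacle, and the corollary follows at once from the specialization $b=n$.
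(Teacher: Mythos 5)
Your proposal is correct and takes essentially the same route as the paper: the paper likewise derives the corollary as an immediate specialization of Theorem~\ref{Thm: Depth for corona with CM closed graph} with $B=P_n$ and $b=n$, observing that $P_n$ is a Cohen--Macaulay closed graph whose blocks are copies of $K_2$ glued at consecutive vertices. Your explicit verification of closedness and of the block decomposition merely spells out what the paper treats as known, so there is nothing to add.
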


\section*{\bf Acknowledgements} 
The first named author thanks the National Board for Higher Mathematics (NBHM), Department of Atomic Energy, Government of India, for financial support through a Postdoctoral Fellowship.

\section*{\bf Statements and Declarations}

There is no conflict of interest regarding this manuscript. No funding was received for it.

\end{document}